\newtheorem{thm}{Theorem}[section]
\newtheorem{prop}[thm]{Proposition}
\newtheorem{lem}[thm]{Lemma}
\newtheorem{cor}[thm]{Corollary}
\theoremstyle{remark}
\newtheorem{rem}[thm]{Remark}
\theoremstyle{definition}
\newcommand{\C}{\mathbb C}     % For Complex numbers
\newcommand{\N}{\mathbb N}     % For Natural numbers
\newcommand{\R}{\mathbb R}     % For Real numbers
\newcommand{\Z}{\mathbb Z}     % For Integers
\renewcommand{\a}{\alpha}
\renewcommand{\d}{\delta}
\newcommand{\e}{\varepsilon}
\renewcommand{\l}{\lambda}
\newcommand{\s}{\sigma}
\renewcommand{\k}{\kappa}
\newcommand{\vphi}{\varphi}
\newcommand{\bP}{\overline{\mathbb{P}}}
\newcommand{\bE}{\overline{\mathbb{E}}}
\newcommand{\bigo}{\mathcal{O}}
\newcommand{\fl}[1]{\lfloor #1 \rfloor}  % Floor function
\newcommand{\ind}[1]{ \mathbf{1}_{ \{ #1 \} } } % Indicator functions
\DeclareMathOperator{\Var}{Var}
\DeclareMathOperator{\Cov}{Cov}
\newcommand{\w}{\omega}              % Shortcut for \omega
\renewcommand{\P}{\mathbb{P}}        % Annealed Probability
\newcommand{\E}{\mathbb{E}}          % Annealed Expectation
\newcommand{\vp}{\mathrm{v}}       % Limiting velocity
\newcommand{\bvp}{\mathrm{\mathbf{v}}} % Multidimensional limiting velocity
\newcommand{\dd}{\mathrm{d}}
\providecommand{\Abs}[1]{\Bigr\lvert#1\Bigl\rvert}%big absolute value
\newcommand{\nn}{\nonumber}
\newcommand{\avar}{V}%first variable centered
\newcommand{\bvar}{W}%second variable centered
\newcommand{\vvar}{\mathbf{Z}}%vector-valued variable=(avar,bvar)
\newcommand{\asum}{X}%first summation
\newcommand{\bsum}{Y}%second summation
\newcommand{\vsum}{\mathbf{S}}%vector-valued summation=(asum,bsum)
\newcommand{\polya}{J}%variable in the polya distribution
\begin{document}

%opening
%\title{Berry-Esseen Estimates for Annealed CLT}
\title[Regenerative CLT rates]{Berry-Esseen estimates for regenerative processes under weak moment assumptions}

\author{Xiaoqin Guo}
\address{Xiaoqin Guo \\ University of Wisconsin, Madison \\ Department of Mathematics \\ 425 Van Vleck Hall \\ Madison, WI 53706 \\ USA }
\email{guoxq84@gmail.com}
\urladdr{https://sites.google.com/site/guoxx097/}
%\thanks{}

\author{Jonathon Peterson}
\address{Jonathon Peterson \\  Purdue University \\ Department of Mathematics \\ 150 N University St \\ West Lafayette, IN 47907 \\ USA}
%\curraddr{Department of Mathematics \\ Cornell University \\ Malott Hall \\ Ithaca, NY 14850 \\ USA}
\email{peterson@purdue.edu}
\urladdr{http://www.math.purdue.edu/~peterson}
\thanks{J. Peterson was partially supported by NSA grants H98230-15-1-0049 and H98230-16-1-0318.}

\subjclass[2010]{Primary: 60F05; Secondary: 60K37, 60K15} 	
% 60K05 - Renewal theory
% 60K15 - Markov renewal processes, semi-Markov processes
% 60K35 - Interacting random processes; statistical mechanics type models; percolation theory
% 60F05 - Central limit and other weak theorems 
% 60F17 - Functional limit theorems; invariance principles
% 60G50 - Sums of independent random variables; random walks 
% 60J10 - Markov chains (discrete-time Markov processes on discrete state spaces) 
% 60J15 - Random walks
% 60K37 - Processes in random environments 
\keywords{regeneration times, CLT rates of convergence, random walks in random environments}

\date{\today}

\begin{abstract}
We prove Berry-Esseen type rates of convergence for central limit theorems (CLTs) of regenerative processes which generalize previous results of Bolthausen under weaker moment assumptions. 
We then show how this general result can be applied to obtain rates of convergence for 
(1) CLTs for additive functionals of positive recurrent Markov chains under certain conditions on the strong mixing coefficients, and 
(2) annealed CLTs for certain ballistic random walks in random environments. 
%give rates of convergence of the central limit theorem under the annealed measure for certain ballistic random walks in random environments. 
\end{abstract}

\maketitle
  
\section{Introduction}

%\subsection{Regenerative processes}
A real-valued stochastic process $\{X_n\}_{n\ge 0}$ is called a (discrete time) {\it regenerative process} if there exists an increasing sequence of random times (not necessarily stopping times) $0=\tau_0<\tau_1<\tau_2<\tau_3<\cdots$ such that if $\mathcal{G}_m  = \s(\tau_1,\tau_2,\ldots,\tau_m,X_1,X_2,\ldots, X_{\tau_m})$ for $m\geq 1$ then 
\begin{align*}
& \P\left( \{X_{n+\tau_m}-X_{\tau_m} \}_{n\geq 0} \in A, \, \{\tau_{m+k}-\tau_m\}_{k\geq 1} \in B \, | \, \mathcal{G}_m \right) \\
&\qquad = \P\left( \{X_{n+\tau_1}-X_{\tau_1} \}_{n\geq 0} \in A, \, \{\tau_{1+k}-\tau_1\}_{k\geq 1} \in B\right),
%= \bP\left( \{X_n\}_{n\geq 0} \in A, \, \{\tau_k\}_{k\geq 1} \in B \right)
 \end{align*}
 for any Borel measurable sets $A \subset \R^{\Z_+}$ and $B \subset \N^{\Z_+}$. 
That is, the random times $\tau_m, m\ge 1$, split the process into independent pieces, and these pieces are i.i.d.\ after time $\tau_1$.
% That is, after every $\tau_m, m\ge 1$, the increment of the process is independent of the history $\mathcal G_m$ and has the same distribution as the process after time $\tau_1$. 
 We call the random variables $\{\tau_n\}_{n\ge 1}$  {\it regeneration times} for the regenerative process $\{X_n\}_{n\ge 0}$. 
Examples of regenerative process include:
\begin{enumerate}[i)]
 \item Sums $X_n=\sum_{i=1}^n\xi_i$ of iid random variables $(\xi_i)_{i\in\N}$, where we take $\tau_k=k$. 
% \item A recurrent Markov chain $(X_n)_{n\in\N}$. Let $o$ denote a state in the state space of $X_\cdot$, then we can define $\tau_n$ to be the $n$-th hitting time of $o$.
 \item Additive functionals $X_n = \sum_{i=1}^n f(\zeta_i)$ of a recurrent, irreducible Markov chain $\{\zeta_i\}_{i\geq 0}$ on a countable state space $\mathcal{S}$. In this case one defines $\tau_n$ to be the $n$-th visit of the Markov chain to a fixed state $o \in \mathcal{S}$ in the state space of the Markov chain. 
 \item A ballistic random walk $(X_n)_{n\in\N}$ in a random environment under the annealed measure, where $(\tau_n)_{n\in\N}$ are defined to be the non-backtracking times in a fixed direction of transience (see Section \ref{sec:RWRE} for definitions of these terms).
%See Section~\ref{sec:RWRE} for a detailed definition of RWRE and the corresponding regeneration times.
\end{enumerate}

Since a regenerative process has the same law after any regeneration time $\tau_m$ with $m\geq 1$, and since this law may be different from the law of the process after time $\tau_0 = 0$, it is convenient to denote by $\bP$ the law of the process after a regeneration time. That is, 
\begin{equation}\label{bPdef}
 \bP\left( \{X_n\}_{n\geq 0} \in A, \, \{\tau_k\}_{k\geq 1} \right)
 = \P\left( \{X_{\tau_1+n}-X_{\tau_1} \}_{n\geq 0} \in A, \, \{\tau_{1+k}-\tau_1\}_{k\geq 1} \in B \right).
\end{equation}
We will denote expectations with respect to the measures $\P$ and $\bP$ by $\E$ and $\bE$, respectively. 

For a regenerative process $(X_n)_{i\in\N}$, we let $X_0=0$ and denote the increments by $\xi_i:=X_i-X_{i-1}$ for $i\in\N$.
If $\bE[ \sum_{i=1}^{\tau_1} |\xi_i| ] < \infty$ then it follows from standard arguments that 
\begin{equation}\label{eq:regLLN}
 \lim_{n\to\infty} \frac{X_n}{n} = \frac{\bE[X_{\tau_1}]}{\bE[\tau_1]} =: \mu, \quad \P\text{-a.s.}
\end{equation}
Moreover, if $\bE[\tau_1] < \infty$ and $\bE\left[ \left( \sum_{i=1}^{\tau_1} |\xi_i-\mu| \right)^2 \right] < \infty$ 
%and if 
%\[
% \s^2 = \frac{\bE[(X_{\tau_1} - \tau_1 \mu)^2]}{\bE[\tau_1]} > 0, 
%\]
then a CLT holds for the sums of the regenerative sequence. That is, if $\Phi(t)$ is the standard normal distribution function, then 
\begin{equation}\label{eq:regCLT}
 \lim_{n\to\infty} \P\left( \frac{X_n - n\mu}{\s \sqrt{n}} \leq t \right) = \Phi(t) 
%= \int_{-\infty}^t \frac{1}{\sqrt{2\pi}} e^{-z^2/2} \, dz, 
\quad \forall t \in \R, 
\quad \text{where } 
\s^2 := \frac{\bE[(X_{\tau_1} - \tau_1 \mu)^2]}{\bE[\tau_1]} > 0.
\end{equation}
%where $\s^2 = \bE[(X_{\tau_1} - \tau_1 \mu)^2]/\bE[\tau_1] > 0$. 
%{\color{red}(Bolthausen seems to give slightly weaker assumptions for this and references Chung. See Theorem A in Bolthausen's paper.)}

The main result in this paper is the following theorem which gives polynomial rates of convergence for the regenerative CLT in \eqref{eq:regCLT} under appropriate moment assumptions. 
%Our main result is the following. 
\begin{thm}\label{th:BEregCLT}
 Assume for some $\d \in (0,1]$ that 
 \[
  \bE[\tau_1^{2+\d}] < \infty, \quad \bE\left[ \left( \sum_{i=1}^{\tau_1} |\xi_i| \right)^{2+\d} \right] < \infty, 
\quad 
 \E[ \tau_1^\d ] < \infty, \quad \text{and}\quad
 \E[ X_{\tau_1}^\d ] < \infty,
% \E\left[ \left( \sum_{i=1}^{\tau_1} |\xi_i| \right)^{\d} \right] < \infty, 
\]
then 
%\[
% \limsup_{n\ra\infty} n^{\d/2} \sup_{t \in \R} \left| \P\left( \frac{X_n - n\mu}{\s \sqrt{n}} \leq t \right) - \Phi(t) \right| < \infty. 
%\]
there exists a constant $C<\infty$ such that 
\begin{equation}\label{BEuniform}
 \sup_{t \in \R} \left| \P\left( \frac{X_n - n\mu}{\s \sqrt{n}} \leq t \right) - \Phi(t) \right| \leq \frac{C}{n^{\d/2}}, \quad \forall n\geq 1, 
\end{equation}
where $\mu$ and $\s$ are defined as in \eqref{eq:regLLN} and \eqref{eq:regCLT}. 
\end{thm}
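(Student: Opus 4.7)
The strategy is a Bolthausen-style reduction to the classical Berry--Esseen theorem for i.i.d.\ sums under $(2+\delta)$-th moments (as in Katz, Bikyalis, Petrov), applied to the cycle contributions. Relative to Bolthausen's original third-moment argument, the weaker hypothesis here forces tighter control of all error terms, especially those involving the incomplete cycle at time $n$.

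Set $N_n := \max\{k \geq 0 : \tau_k \leq n\}$, $k_n := \lfloor n/\bE[\tau_1] \rfloor$, and $Y_i := (X_{\tau_i} - X_{\tau_{i-1}}) - (\tau_i - \tau_{i-1})\mu$ for $i\geq 2$. By the regeneration property and the moment hypotheses, $\{Y_i\}_{i\geq 2}$ are i.i.d.\ under $\P$ with mean zero, variance $\sigma^2\bE[\tau_1]$, and finite $(2+\delta)$-th moment (from the triangle inequality applied to the two $\bE$-moment hypotheses). The ``initial'' contribution $Y_1 := X_{\tau_1} - \tau_1 \mu$ has possibly different distribution but satisfies $\E[|Y_1|^\delta] < \infty$ from the $\delta$-th moment hypotheses on $\tau_1$ and $X_{\tau_1}$.

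The main steps are the following. First, apply the classical i.i.d.\ Berry--Esseen theorem for $(2+\delta)$-th moments to $S_{k_n} := \sum_{i=2}^{k_n+1} Y_i$, yielding
\[
\sup_t |\bP(S_{k_n} \leq t\sigma\sqrt{n}) - \Phi(t)| = O(n^{-\delta/2}).
\]
Second, decompose $X_n - n\mu = S_{k_n} + \Delta_n$, where $\Delta_n$ collects the partial-sum correction $\sum_{i=2}^{N_n} Y_i - S_{k_n}$, the boundary remainder $R_n := (X_n - X_{\tau_{N_n}}) - (n - \tau_{N_n})\mu$, and (under $\P$) the initial term $Y_1$. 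Third, via the standard sandwich
\[
|\P(X_n - n\mu \leq t\sigma\sqrt{n}) - \bP(S_{k_n} \leq t\sigma\sqrt{n})| \leq \P(|\Delta_n| > \epsilon\sigma\sqrt{n}) + O(\epsilon + n^{-\delta/2}),
\]
reduce to a tail bound on $\Delta_n$. Control $|N_n - k_n|$ via Rosenthal's inequality applied to $\tau_{k_n} - k_n\bE[\tau_1]$ (a centered i.i.d.\ sum with $(2+\delta)$-th moment), then use a maximal inequality of Doob/von~Bahr--Esseen type for partial sums of the $Y_i$ on $\{|N_n - k_n| \leq y_\ast\}$. Bound $|R_n|$ by the size of the cycle straddling time $n$, whose tails are extracted from $\bE[(\sum_{i=1}^{\tau_1}|\xi_i|)^{2+\delta}] < \infty$ and $\bE[\tau_1^{2+\delta}] < \infty$. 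For the $\P \to \bP$ reduction, condition on $(\tau_1, X_{\tau_1})$ and absorb the initial-cycle contribution using $\P(\tau_1 > n^{1/2}) + \P(|Y_1| > n^{1/2}) = O(n^{-\delta/2})$ together with continuity of $\Phi$.

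The hardest step is tuning the cutoff $\epsilon$ to balance the smoothing error $O(\epsilon)$ from the continuity of $\Phi$ (which to achieve rate $n^{-\delta/2}$ forces $\epsilon = O(n^{-\delta/2})$) against the tail bound on $\Delta_n$ at the deviation scale $n^{(1-\delta)/2}$. Because the cycle containing the deterministic time $n$ has a length-biased distribution with heavier tails than a typical cycle, the straightforward Markov bound on $R_n$ is not sharp enough under only $(2+\delta)$ moments. Overcoming this likely requires either an Esseen-style smoothing argument applied directly to the characteristic function of $X_n - n\mu$ (exploiting anti-concentration of $S_{k_n}$), or a conditional decomposition that isolates the near-$n$ cycle from the bulk sum $S_{k_n}$ and exploits approximate independence; in parallel, the interplay between the $\delta$-moment $Y_1$ contribution and the $\P \to \bP$ reduction must be coordinated carefully to preserve the rate.
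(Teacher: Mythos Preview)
Your sandwich/random-index reduction is the natural first try, but as you yourself flag at the end, it does not close. The obstruction is not a matter of tuning $\epsilon$ more carefully: with $\epsilon\asymp n^{-\delta/2}$ you must show $\P(|\Delta_n|>c\,n^{(1-\delta)/2})=O(n^{-\delta/2})$, yet the dominant part of $\Delta_n$ is $\sum_{i=\min(N_n,k_n)}^{\max(N_n,k_n)} Y_i$, a sum over a random index set of typical size $|N_n-k_n|\asymp \sqrt{n}$, whose own fluctuations are of order $n^{1/4}$. For $\delta>1/2$ the threshold $n^{(1-\delta)/2}$ is \emph{below} this typical size, so the tail probability does not vanish at all, let alone at rate $n^{-\delta/2}$. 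Compounding this, $N_n$ is a function of the $(\tau_i-\tau_{i-1})$, which are correlated with the $Y_i$, so the ``approximate independence'' you allude to is not available without further structure. Your alternative suggestion of working directly with the characteristic function of $X_n-n\mu$ is essentially what is required, but the sketch gives no indication of how to execute it.

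The paper avoids this random-index difficulty entirely by a different decomposition. Rather than fixing $k_n$ and controlling $N_n-k_n$, it conditions on the \emph{joint} event $\{\tau_k=n-m\}$ and the remainder $(m,X_n-X_{\tau_k})$, and then sums over all $k$. The resulting terms are $\bP(\bar X_{\tau_k}/\sqrt{k}\le\cdot,\ \tau_k=n-m)$, i.e.\ a \emph{semi-local} two-dimensional probability in the i.i.d.\ pair $(\bar X_{\tau_1},\tau_1)$. The technical heart of the proof is a non-uniform semi-local Berry--Esseen estimate under $(2+\delta)$-th moments: for $y_k=(n-m-k\bar\tau)/\sqrt{k}$,
\[
\bigl|\bP(\cdot,\ \tau_k=n-m)-k^{-1/2}\psi_A(\cdot,y_k)\bigr|\le C\,k^{-(1+\delta)/2}(1+y_k^2)^{-1},
\]
where $\psi_A$ is the corresponding Gaussian semi-local density. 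The decay factor $(1+y_k^2)^{-1}$ is exactly what makes the sum over $k$ converge at rate $n^{-\delta/2}$, and it replaces the Anscombe-type control of $N_n-k_n$ that your approach would need. The remaining steps---replacing $\psi_A$ at the shifted argument by $\psi_A$ at $\alpha x$, and summing the local Gaussian pieces back to $\Phi(x)$---are Riemann-sum estimates, and the passage from $\bP$ to $\P$ via conditioning on $(\tau_1,X_{\tau_1})$ is essentially the step you describe.
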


Theorem \ref{th:BEregCLT} generalizes several known results. First of all, for i.i.d.\ sequences (i.e., when $\tau_k \equiv k$) the conclusion of Theorem \ref{th:BEregCLT} is the classical Berry-Esseen Theorem \cite{berryCLT,esseenCLT}. 
For regenerative sequences, the results of Theorem \ref{th:BEregCLT} for the case $\d = 1$ were proved by Bolthausen\footnote{In \cite{bo80}, the results were for additive functionals of positive recurrent Markov chains. However, the proofs in \cite{bo80} only use the regenerative structure of positive recurrent Markov chains and thus go through without change for regenerative processes.} in \cite{bo80}.
%, and thus the main content of this paper is the extension of Bolthausen's results to optimal polynomial rates of convergence under less restrictive moment assumptions. 
Some of the techniques introduced by Bolthausen were then used in \cite{hAECLT,MLTSRS} to obtain asymptotic expansions of the CLT (i.e., identifying lower order terms in the CLT error beyond the Berry-Esseen rates) under higher moment assumptions. The results of this paper extend the results of \cite{bo80} in a different direction, obtaining weaker bounds on the rate of decay in the CLT error but under less restrictive moment assumptions.

For i.i.d.\ sequences, the Berry-Esseen Theorem states that the constant $C$ in Theorem \ref{th:BEregCLT} can be given by $\frac{C_\d E[|\xi-\mu|^{2+\d}]}{E[(\xi-\mu)^2]^{1+\d/2}}$ for some absolute constant $C_\d < \infty$ depending only on $\d \in (0,1]$.  
In this paper we are primarily concerned with the polynomial rate of decay and thus we do not compute the constant $C$ explicitly. However, if one examines carefully the proofs in the paper, it can be seen that these show that
\begin{equation}\label{BElimsup}
 \limsup_{n\to\infty} n^{\d/2} \sup_{t \in \R} \left| \P\left( \frac{X_n - n\mu}{\s \sqrt{n}} \leq t \right) - \Phi(t) \right| \leq C' < \infty,
\end{equation}
and that the constant $C'$ can be expressed explicitly in terms of 
%$\E[\tau_1^\d], \E[X_{\tau_1}^\d], \bE[\tau_1], \bE[X_{\tau_1}], \bE[(X_{\tau_1}-\mu \tau_1)^2], \bE[\tau_1^{2+\d}]$, and $\E[\left(\sum_{i=1}^{\tau_1} |\xi_i| \right)^{2+\d}]$.
certain moments of $\tau_1$, $X_{\tau_1}$, $\sum_{i=1}^{\tau_1} |\xi_i|$ and $(X_{\tau_1} - \mu \tau_1)$ under the measures $\P$ and $\bP$. 
However, since for one of the main applications that we are interested in (random walks in random environments) the moments of $\tau_1$ and $X_{\tau_1}$ cannot be explicitly computed, we focus on the polynomial rate of decay rather than computing explicit uniform upper bounds. 
We note also that \eqref{BElimsup} is sufficent to imply that the uniform upper bound \eqref{BEuniform} holds for some (non-explicit) $C<\infty$, and thus our proof below will focus on proving \eqref{BElimsup} rather than \eqref{BEuniform}. 

\begin{comment}
{\color{red}
\textbf{Remark to be removed later:}
%The constant $C'$ can be expressed explicitly in terms of $\E[\tau_1^\d], \E[X_{\tau_1}^\d], \bE[\tau_1], \bE[X_{\tau_1}], \bE[(X_{\tau_1}-\mu \tau_1)^2], \bE[\tau_1^{2+\d}]$, and $\E[\left(\sum_{i=1}^{\tau_1} |\xi_i| \right)^{2+\d}]$.
I've checked that when $\d \in (0,1)$ the constant  $C'$ in \eqref{BElimsup} can be given by 
\begin{align*}
& 2 \E[\tau_1^\d] + 2\E[X_{\tau_1}^\d] + \frac{(1+\mu)^{1-\d}}{\s \sqrt{2\pi e}} \E[|X_{\tau_1}-\mu \tau_1|^\d] 
 + \bE[\tau_1^{2+\d}] + \bE\left[ \left( \sum_{i=1}^{\tau_1} |\xi_i| \right)^{2+\d} \right]  \\
&\qquad  + K_0 \left( \bar\tau^{-(1+\d)/2} + 4 \bar\tau^{(3+\d)/2} + \frac{1}{2} \right),
\end{align*}
where $K_0$ is the constant from Theorem \ref{thm:semi-local-LLT} which I don't have an explicit formula for yet. 
When $\d=1$ the constant $C'$ in \eqref{BElimsup} is more complicated and also depends on the entries of the convariance matrix of $(X_{\tau_1},\tau_1)$ under the measure $\bP$ (the expression is more complicated because now Lemmas \ref{lem:part2} and \ref{lem:part3} contribute to $C'$ and the matrix $A$ appears via bounds such as \eqref{phibounds}). 
}
\end{comment}

\subsection{Outline of the paper}

The remainder of the paper is structured as follows. 
In Section \ref{sec:Markov} we show how Theorem \ref{th:BEregCLT} can be applied to additive functionals of Markov chains satisfying certain mixing conditions and moment bounds, and then in Section \ref{sec:RWRE} we give applications of Theorem \ref{th:BEregCLT} to ballistic RWRE on $\Z^d$ for any $d\geq 1$. 
In both Sections \ref{sec:Markov} and \ref{sec:RWRE} certain applications require Theorem \ref{th:BEregCLT} with $\d<1$, showing the necessity of generalizing the previous results in \cite{bo80}. 
%In Section \ref{sec:RWRE} we show how Theorem \ref{th:BEregCLT} can be applied to certain RWRE. In particular, we give examples in all dimensions where the results of Theorem \ref{th:BEregCLT} need to be applied for some $\d < 1$, showing the necessity of generalizing the previous results in \cite{bo80}. 
The proof of Theorem \ref{th:BEregCLT} is then given in Sections \ref{sec:SLBE} and \ref{sec:regen}. The general approach of these two sections follows that of \cite{bo80}, but certain parts need to be adapted due to the weaker moment assumptions. In particular, the main result of Section \ref{sec:SLBE} (Theorem \ref{thm:semi-local-LLT}) is a semi-local Berry-Esseen estimate for sums of two-dimensional i.i.d.\ random variables that is quite technical and required significant work to generalize the corresponding semi-local Berry-Esseen estimates in \cite{bo80}. 
Finally, in Section \ref{sec:disc} we again consider the rates of convergence of CLTs for RWRE, comparing the results of this paper with other recent results and posing a few open questions regarding CLTs for RWRE which cannot be handled using the regenerative methods in this paper.

Throughout the paper we will use notation such as $c,c',C,C'$ to denote generic positive constants whose specific values are not important and which can change from one line to the next. 
Specific constants whose value remains the same throughout the paper are denoted by numbered subscripts like $c_0,c_1,C_0,C_1$. 
When we wish to denote the dependence of a constant on a particular parameter we will use subscript such as $C_\e$ or $C_f$ to denote this dependence. 

\section{Application to additive functionals of Markov chains}\label{sec:Markov}

As a first application of Theorem \ref{th:BEregCLT} we consider additive functionals of Markov chains. Let $\zeta = \{\zeta_n\}_{n\geq 0}$ be an irreducible, positive recurrent Markov chain on a countable state space $\mathcal{S}$ and let $X_n = \sum_{i=1}^n f(\zeta_i)$ for some function $f:\mathcal{S} \to \R$. 
For a probability distribution $\nu$ on $\mathcal{S}$ we will denote the law of the Markov chain with initial condition $\zeta_0 \sim \nu$ by $\P_\nu$. If we start at a fixed point $\zeta_0 = x \in \mathcal{S}$ then we will use $\P_x$ in place of $\P_{\delta_x}$. 
%Since we are assuming the Markov chain $\zeta$ is irreducible and positive recurrent, there exists a unique stationary distribution which we will denote by $\pi$. 
Central limit theorems have been proved for additive functionals of Markov chains under a number of conditions
(see for instance \cite{cMCSTP,jMarkovCLT,mtMCSS}).
%{\color{red}(see for instance \cite[Chapter 17]{mtMCSS},\cite{jMarkovCLT}....and more....)}
% {\color{red}(see \cite{jMarkovCLT} for a survey of some of these results)}. 
We will be interested here in conditions for a CLT which are given in terms of the \emph{strong mixing coefficients} of the Markov chain, 
%Recall that the strong mixing coefficients $\{\a(n)\}_{n\geq 1}$  of a stationary sequence $\zeta = \{\zeta_i\}$ are defined by 
\[
 \a(n) = \sup_m \sup_{A \in \s(\zeta_i, \, i\leq m)} \sup_{B \in \s(\zeta_i, \, i \geq m+n)} |\P_\pi(A \cap B) - \P_\pi(A)\P_\pi(B)|. 
\]
For positive recurrent, aperiodic Markov chains it is known that $\lim_{n\to\infty} \a(n) = 0$ \cite[p. 195]{rMP}. 
The following Theorem, which is a direct application of \cite[Theorem 18.5.3]{ilISS}, shows that 
%if $f \in L^p(\mathcal{S},\pi)$ for some $p>2$ and 
if the strong mixing coefficients decay fast enough then there is a CLT for the additive functional $X_n$. 
%{\color{red} (I think $\a(n) \to 0$ if the Markov chain is also aperiodic).}
%The following Theorem is a direct application of \cite[Theorem 18.5.3]{ilISS}. 
\begin{thm}[Theorem 18.5.3 in \cite{ilISS}]\label{th:mixingCLT}
 Let $X_n = \sum_{i=1}^n f(\zeta_i)$, where $\{\zeta_i\}_{i\geq 0}$ is an irreducible, positive recurrent Markov chain on a countable state space $\mathcal{S}$ with stationary distribution $\pi$. Assume that for some $p \in (2,\infty]$
 \begin{enumerate}\renewcommand{\theenumi}{\roman{enumi}}
  \item $f \in L^p(\mathcal{S},\pi)$,
  \item and $\sum_{n\geq 1} \a(n)^{\frac{p-2}{p}} < \infty$, where $\a(n)$ are the strong mixing coefficients. 
 \end{enumerate}
Then, 
\begin{equation}\label{mufsfdef}
 \mu_f := \E_\pi[ f(\zeta_0) ] < \infty 
\quad\text{and}\quad 
\s_f^2 := \Var_\pi(f(\zeta_0)) + 2 \sum_{k=1}^\infty \Cov_\pi(f(\zeta_0),f(\zeta_k)) < \infty,
\end{equation}
and if $\s_f>0$ then 
\begin{equation}\label{MarkovCLT}
 \lim_{n\to\infty} \P_\pi\left( \frac{X_n-\mu_f n}{\s_f \sqrt{n}} \leq x \right) = \Phi(x), \quad \forall x \in \R. 
\end{equation}
\end{thm}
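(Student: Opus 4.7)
The plan is direct: since the result is advertised as a direct application of \cite[Theorem 18.5.3]{ilISS}, I would simply verify that the hypotheses of the cited theorem hold for the one-dimensional sequence $\{f(\zeta_i)\}_{i\ge 0}$ under $\P_\pi$. First, because $\pi$ is the stationary distribution for the irreducible, positive recurrent chain, the process $\{\zeta_i\}_{i\ge 0}$ is strictly stationary under $\P_\pi$, and consequently so is $\{f(\zeta_i)\}_{i\ge 0}$. Moreover, since each $f(\zeta_i)$ is $\s(\zeta_i)$-measurable, the strong mixing coefficients of the image sequence are dominated by the coefficients $\a(n)$ of the underlying chain.

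Next I would check the moment and covariance-summability conditions. The hypothesis $f \in L^p(\mathcal{S},\pi)$ with $p>2$ immediately gives that $\mu_f = \E_\pi[f(\zeta_0)]$ and $\Var_\pi(f(\zeta_0))$ are finite. To see that $\sum_{k\ge 1}|\Cov_\pi(f(\zeta_0),f(\zeta_k))| < \infty$, one appeals to Davydov's covariance inequality for $\a$-mixing stationary sequences,
\[
 |\Cov_\pi(f(\zeta_0),f(\zeta_k))| \le C\, \E_\pi[|f(\zeta_0)|^p]^{2/p}\, \a(k)^{(p-2)/p},
\]
and the summability of the right-hand side is exactly assumption (ii). In particular, $\s_f^2$ as defined in \eqref{mufsfdef} is a finite, absolutely convergent series.

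With strict stationarity, finite $p$-th moment, and the required summable $\a$-mixing rate in place for $\{f(\zeta_i)\}$, the conclusion \eqref{MarkovCLT} is immediate from \cite[Theorem 18.5.3]{ilISS}. The hypothesis $\s_f > 0$ is built into the statement of the theorem, so no separate degeneracy argument is needed.

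The only real obstacle here is bookkeeping rather than analysis: one must correctly pass from the abstract $\a$-mixing hypotheses of the cited result to properties of $\{f(\zeta_i)\}$ that are in turn inherited from $\{\zeta_i\}$ and $\pi$. Since both stationarity and the mixing rate transfer without loss to the image sequence, the verification is routine and introduces no new analytic content beyond the cited theorem.
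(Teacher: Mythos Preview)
Your proposal is correct and matches the paper's treatment: the paper does not give an independent proof but simply cites \cite[Theorem 18.5.3]{ilISS}, noting that the result is a ``direct application'' of that theorem. Your verification that $\{f(\zeta_i)\}_{i\ge 0}$ is strictly stationary under $\P_\pi$, inherits the $\a$-mixing coefficients of the chain, and satisfies the moment and covariance-summability hypotheses (via Davydov's inequality) is exactly the routine bookkeeping that justifies invoking the cited theorem.
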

The main goal of this Section is to show how Theorem \ref{th:BEregCLT} allows us to obtain quantitative bounds on the polynomial rate of convergence for the CLT in \eqref{MarkovCLT} under slightly stronger assumptions on the strong mixing coefficients.

\begin{thm}\label{th:BEmixing}
 Let $X_n = \sum_{i=1}^n f(\zeta_i)$, where $\zeta$ is an irreducible, positive recurrent Markov chain on a countable state space $\mathcal{S}$ with stationary distribution $\pi$. Assume 
for some $p \in (2,\infty]$ and $\l > \frac{2}{p-2}$ that
 \begin{enumerate}\renewcommand{\theenumi}{\roman{enumi}}
  \item $f \in L^p(\mathcal{S},\pi)$ % for some $p \in (2,\infty]$.
  \item and $\sum_{n\geq 1} n^\l \a(n) < \infty$, where $\a(n)$ are the $\a$-mixing coefficients. 
%  \item The initial distribution $\nu$ of the Markov chain is bounded by some multiple of the stationary distribution $\pi$. 
 \end{enumerate}
%If $\l(p-2) > 2$ then 
Then $\mu_f$ and $\s_f$ defined in \eqref{mufsfdef} are finite, and if $\s_f>0$ and the initial distribution $\nu$ of the Markov chain is bounded by some multiple of the stationary distribution $\pi$,
then there exists a constant $C>0$ such that
\[
  \sup_x \left| \P_\nu \left( \frac{X_n - \mu_f n}{\s_f \sqrt{n}} \leq x \right) - \Phi(x) \right| 
 \leq 
 \begin{cases}
  C n^{-\min\{ \frac{\l(p-2)-2}{2(\l+1+p)}, \frac{1}{2} \} } & \text{if } 2<p<\infty \\
  C n^{-\min\{ \frac{\l}{2}, \frac{1}{2} \} } & \text{if } p = \infty. 
 \end{cases}
\]
%\[
% \sup_x \left| \P_\nu \left( \frac{X_n - \pi(f) n}{\s \sqrt{n}} \leq x \right) - \Phi(x) \right| 
% \leq C n^{-\min\{ \frac{\l(p-2)-2}{2(\l+1+p)}, \frac{1}{2} \} }
%\]
%Moreover, this holds even if $p=\infty$. That is,  if $f$ is bounded then 
%\[
% \sup_x \left| \P_\nu \left( \frac{X_n - \pi(f) n}{\s \sqrt{n}} \leq x \right) - \Phi(x) \right| 
% \leq C n^{-\min\{\l,1\}/2}. 
%\]
\end{thm}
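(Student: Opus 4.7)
The plan is to apply Theorem~\ref{th:BEregCLT} after imposing a regenerative structure on $(X_n)$ and then read off the rate by optimizing the admissible $\d\in(0,1]$. Since $\zeta$ is irreducible and positive recurrent on a countable state space, the stationary law $\pi$ is positive on every state; fix any reference state $o\in\mathcal{S}$ and let $\tau_n$ be the $n$-th visit of $\zeta$ to $o$. The strong Markov property makes $(X_n)$ regenerative with respect to $(\tau_n)_{n\ge 1}$, and under $\bP=\P_o$ Kac's formula gives $\bE[\tau_1]=1/\pi(o)$ and $\mu=\mu_f$, while a standard calculation identifies the regenerative variance $\s^2$ of \eqref{eq:regCLT} with $\s_f^2$ of \eqref{mufsfdef}.

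The core of the argument is to translate the hypotheses $f\in L^p(\pi)$ and $\sum_n n^\l \a(n)<\infty$ into the four moment bounds required by Theorem~\ref{th:BEregCLT}. First, I would combine the two-point mixing estimate $|\P_\pi(\zeta_0=o,\zeta_n=o)-\pi(o)^2|\le\a(n)$ with an inductive blocking argument, splitting $\{\tau_1>n\}$ into several sub-events on well-separated time windows and summing the mixing errors across the boundaries, to produce a polynomial tail bound on $\bP(\tau_1>n)$ of order $n^{-(\l+1)}$ up to lower-order terms. Second, for the block sum I would interpolate between $\tau_1$ and $\|f\|_p$ via H\"older,
\[
\sum_{i=1}^{\tau_1}|f(\zeta_i)|\le \tau_1^{(p-1)/p}\Bigl(\sum_{i=1}^{\tau_1}|f(\zeta_i)|^p\Bigr)^{1/p},
\]
and then use H\"older once more in $\bE$ together with Kac's identity $\bE[\sum_{i=1}^{\tau_1}|f(\zeta_i)|^p]=\|f\|_p^p/\pi(o)$ to reduce $\bE[(\sum_{i=1}^{\tau_1}|f(\zeta_i)|)^{2+\d}]$ to a suitably higher moment of $\tau_1$; when $p=\infty$ the trivial bound $\sum_{i=1}^{\tau_1}|f(\zeta_i)|\le\|f\|_\infty\tau_1$ collapses everything onto $\tau_1$-moments. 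The $\P_\nu$-moments $\E_\nu[\tau_1^\d]$ and $\E_\nu[X_{\tau_1}^\d]$ are handled by the domination $\nu\le C\pi$, which reduces them to the analogous $\P_\pi$-moments.

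Optimizing the H\"older split against the polynomial tail decay of $\tau_1$ yields the largest $\d$ for which all four moment hypotheses hold simultaneously, and this value turns out to be precisely $\min\{(\l(p-2)-2)/(\l+1+p),\,1\}$ when $p<\infty$ and $\min\{\l,\,1\}$ when $p=\infty$; note that $\l>2/(p-2)$ is exactly the condition that this $\d$ is positive. Plugging into Theorem~\ref{th:BEregCLT} then gives the stated rate $n^{-\d/2}$. The main obstacle is the polynomial tail bound on $\tau_1$: a single application of an $\a$-mixing coefficient only controls two events at one time scale, so one must iterate the estimate over many well-separated sub-intervals to convert $\sum_n n^\l\a(n)<\infty$ into polynomial decay of order $n^{-(\l+1)}$. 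Strong mixing is one of the weakest forms of dependence control and does not automatically yield excursion tail bounds, so this step is delicate; once it is in hand, the remaining H\"older/Kac bookkeeping is routine, though the exponent optimization must be done carefully to recover the stated sharp exponent rather than a crude one.
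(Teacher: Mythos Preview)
Your strategy coincides with the paper's: regenerate at returns to a fixed state $o$, verify the moment hypotheses of Theorem~\ref{th:BEregCLT} via H\"older interpolation and Kac's identity, and optimize $\d$. The H\"older/Kac bookkeeping and the final value of $\d$ match the paper exactly.

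One point needs correction. You assert that the blocking argument gives $\bP(\tau_1>n)=\P_o(\tau_1>n)$ of order $n^{-(\l+1)}$, but this is one power short: such a tail only yields $\E_o[\tau_1^q]<\infty$ for $q<\l+1$, whereas your own H\"older step with the claimed $\d=\tfrac{\l(p-2)-2}{\l+1+p}$ requires $\E_o[\tau_1^{2+\l}]<\infty$, since $\tfrac{(p-1)(2+\d)}{p-2-\d}=2+\l$ for precisely this $\d$. With only $(\l+1)$-st moments under $\P_o$ your optimization would produce a strictly smaller $\d$ than the one you write down at the end. The paper does not rederive this moment bound; it simply invokes \cite[Theorem~2]{bo80}, which proves directly that $\sum_n n^\l\a(n)<\infty$ implies $\E_o[\tau_1^{2+\l}]<\infty$ (equivalently $\E_\pi[\tau_1^{1+\l}]<\infty$, the two being linked by the size-biasing relation $\E_\pi[g(\tau_1)]=\pi(o)\,\E_o\bigl[\sum_{k=1}^{\tau_1}g(k)\bigr]$). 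If you wish to supply a self-contained blocking argument, it must reach this stronger conclusion; extracting that extra moment from raw $\a$-mixing is exactly the delicate part you flag, and Bolthausen's proof of it is not short.
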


\begin{rem}
 The assumptions on the mixing coefficients in Theorem \ref{th:BEmixing} are only slightly stronger than in Theorem \ref{th:mixingCLT}. Indeed, if $\sum_n n^\l \a(n)$ for some $\l > \frac{2}{p-2}$ then 
\[
 \sum_n \a(n)^{\frac{p-2}{p}} 
 = \sum_n \left( n^\l \a(n) \right)^{\frac{p-2}{p}} n^{-\frac{\l(p-2)}{p}} 
\leq \left( \sum_n n^\l \a(n) \right)^{\frac{p-2}{p}} \left( \sum_n n^{-\frac{\l(p-2)}{2}}  \right)^{\frac{2}{p}} < \infty. 
\]
Conversely, since $\a(n)$ is non-increasing it can be shown that if $\sum_n \a(n)^{\frac{p-2}{p}} < \infty$ then 
% $\sum_n n^{\frac{2}{p-2}-\e} \a(n) < \infty$ for any $\e>0$.  
$ \sum_n n^\l \a(n) < \infty$ for any $\l < \frac{2}{p-2}$. 
\end{rem}

\begin{rem}\label{rem:orates}
 Theorem \ref{th:BEmixing} extends another result of Bolthausen from \cite{bo80}. 
In \cite{bo80} it was shown that the optimal $\bigo(1/\sqrt{n})$ rates of convergence for the CLT of $X_n$ hold when $p>3$ and $\l \geq \frac{p+3}{p-3}$ (including the case when $p=\infty$ and $\l \geq 1$). 
These are exactly the cases in which Theorem \ref{th:BEmixing} gives $\bigo(1/\sqrt{n})$ rates of convergence. In contrast, Theorem \ref{th:BEmixing} gives slower polynomial rates of convergence 
when either 
\begin{equation}\label{lpcases}
 \text{(i) } p \in (2,3] \text{ and } \l > \frac{2}{p-2}, \quad \text{or} \quad \text{(ii) } p > 3 \text{ and } \l \in \left(\frac{2}{p-2},\frac{p+3}{p-3} \right),
\end{equation}
where in the second case we are including $p=\infty$ and $\l \in (0,1)$. 
%either $p \in (2,3]$ and $\l > \frac{2}{p-2}$ or when $p > 3$ and $\l \in (\frac{2}{p-2},\frac{p+3}{p-3})$ (including the case when $p=\infty$ and $\l \in (0,1)$). 
%in the three cases (1) $p > 3$ and $\l \in (\frac{2}{p-2},\frac{p+3}{p-3})$, or (2) $p \in (2,3]$ and $\l > \frac{2}{p-2}$ or (3) $p= \infty$ and $\l < 1$. 
\end{rem}

\begin{proof}
As noted in Remark \ref{th:BEmixing}, due to the results in \cite{bo80} we need only give the proof of Theorem \ref{th:BEmixing} when $\l>0$ and $p>2$ satisfy one of the two cases in \eqref{lpcases}.  
We will show that in these cases one can find a regenerative structure to apply Theorem \ref{th:BEregCLT} with 
\begin{equation}\label{deltalp}
% \d = \begin{cases} \min\{ \frac{ \l(p-2)-2}{\l+1+p}, 1 \}  &\text{if } p<\infty \\ \min\{ \l, 1 \} & \text{if } p=\infty. \end{cases}
 \d = \begin{cases} \frac{ \l(p-2)-2}{\l+1+p}  &\text{if } p<\infty \\ \l & \text{if } p=\infty. \end{cases}
\end{equation}
Note that the conditions on $\l$ and $p$ in \eqref{lpcases} imply that $\d$ defined in this way satisfies $\d \in (0,1)$.

To obtain a regenerative structure for the additive functional $X_n = \sum_{i=1}^n f(\zeta_i)$, fix an arbitrary state $o \in \mathcal{S}$ and define the regeneration times to be the successive return times of the Markov chain to $o$. That is, $\tau_0 = 0$ and $\tau_k = \inf\{n > \tau_{k-1} : \, \zeta_n = o \}$ for $k\geq 1$.
In this case, the distribution $\bP$ defined in \eqref{bPdef} is simply $\P_o$ 
%(the distribution of the Markov chain started at $\zeta_0=o$) 
and thus since we are assuming that the initial distribution $\nu$ is bounded by a multiple of the stationary distribution
%if the Markov chain has initial distribution $\zeta_0 \sim \nu$ then 
Theorem \ref{th:BEregCLT} will give rates of convergence for a CLT of $X_n$ if 
\begin{equation}\label{MCmoments}
 \E_o[\tau_1^{2+\d}] < \infty , \quad \E_o\left[\left( \sum_{i=1}^{\tau_1} |f(\zeta_i)| \right)^{2+\d} \right] < \infty, \quad 
% \E_\nu[\tau_1^\d] < \infty, \quad \text{and}\quad \E_\nu\left[ \left( \sum_{i=1}^{\tau_1} f(\zeta_i) \right)^\d \right] < \infty
\E_\pi[\tau_1^\d] < \infty, \quad \text{and}\quad \E_\pi\left[ \left| \sum_{i=1}^{\tau_1} f(\zeta_i) \right|^\d \right] < \infty,
\end{equation}
with $\d \in (0,1)$ defined as in \eqref{deltalp}. 
%for 
%\begin{equation}\label{deltalp}
%%%% \d = \begin{cases} \min\{ \frac{ \l(p-2)-2}{\l+1+p}, 1 \}  &\text{if } p<\infty \\ \min\{ \l, 1 \} & \text{if } p=\infty. \end{cases}
% \d = \begin{cases} \frac{ \l(p-2)-2}{\l+1+p}  &\text{if } p<\infty \\ \l & \text{if } p=\infty. \end{cases}
%\end{equation}

It was shown in \cite[Theorem 2]{bo80} that the mixing condition $\sum_n n^\l \a(n)<\infty$ implies that  $\E_o[\tau_1^{2+\l}]<\infty$ and therefore also that $\E_{\pi}[\tau_1^{1+\l}] < \infty$. 
Since it can easily be checked that $\frac{\l(p-2)-2}{\l+1+p} \leq \l$, it follows that the first and third conditions in \eqref{MCmoments} hold. 

In the case when $p=\infty$, the function $f$ is then bounded and the second and fourth conditions in \eqref{MCmoments} are finite whenever the first and third conditions are finite. Therefore, for the remainder of the proof we will assume that $p \in (2,\infty)$. 
To verify the second condition in \eqref{MCmoments} in this case, note that 
\begin{align*}
 \E_o\left[ \left( \sum_{i=1}^{\tau_1} |f(\zeta_i)| \right)^{2+\d} \right]
 &=  \E_o\left[ \left( \sum_{i=1}^{\tau_1} |f(\zeta_i)| \right)^{p \frac{2+\d}{p}} \right] \\
 &\leq \E_o\left[ \left( \tau_1^{p-1} \sum_{i=1}^{\tau_1} |f(\zeta_i)|^p \right)^{\frac{2+\d}{p}} \right] \\
% &= \E_o\left[ \tau_1^{\frac{(p-1)(2+\d)}{p}} \left( \sum_{i=1}^{\tau_1} |f(\zeta_i)|^p \right)^{\frac{2+\d}{p}} \right] \\
 &\leq \E_o\left[ \tau_1^{\frac{(p-1)(2+\d)}{p-2-\d}}  \right]^{\frac{p-2-\d}{p}} \E_o\left[ \sum_{i=1}^{\tau_1} |f(\zeta_i)|^p \right]^{\frac{2+\d}{p}} \\
 &= \E_o\left[ \tau_1^{2+\l} \right]^{\frac{p-1}{\l+1+p}} \E_o\left[ \sum_{i=1}^{\tau_1} |f(\zeta_i)|^p \right]^{\frac{2+\l}{\l+1+p}},
\end{align*}
where the second inequality follows from H\"older's inequality since $\frac{p}{2+\d} = \frac{\l+1+p}{2+\l} > 1$ and the last equality follows from the definition of $\d$ in \eqref{deltalp}. 
We have already shown that the first expectation in the last line is finite, and the second expectation is also finite since $f \in L^p(\mathcal{S},\pi)$ and 
\[
 \E_o\left[ \sum_{i=1}^{\tau_1} |f(\zeta_i)|^p \right]
= \sum_{x \in \mathcal{S}} E_o\left[ \sum_{i=1}^{\tau_1} \ind{\zeta_i = x} \right] |f(x)|^p
= \E_o[\tau_1] \sum_{x \in \mathcal{S}} \pi(x) |f(x)|^p.
%= \E_o[\tau_1] \E_\pi[ |f(\zeta_0)|^p ]. 
\]

Finally, for the second condition in \eqref{MCmoments}, in the proof of Lemma 1 on page 61 of \cite{bo80} it was shown that 
\[
 \E_\pi\left[ \sum_{i=1}^{\tau_1} |f(\zeta_i)| \right] 
 \leq 2 \pi(o) \E_o\left[ \left( \sum_{i=0}^{\tau_1 - 1} |f(\zeta_i)| \right)^2 \right] + 2 \pi(o)\E_o[\tau_1^2] + \max\{|f(o)|,1\},
\]
and the terms on the right are all finite by the arguments above. Since $\delta < 1$ this is more  than enough to verify the first second condition in \eqref{MCmoments}. 
\end{proof}

\begin{rem}
 For Harris recurrent Markov chains on more general state spaces, under a certain regularity assumption Nummelin \cite{nSplitting} developed a ``splitting'' technique which allows one to construct a related Markov chain which does have regeneration times. The proof of Theorem \ref{th:BEmixing} can be extended to such Harris recurrent Markov chains using this splitting technique in the same manner as was done by Bolthausen in \cite{bBETHRMC} in the case when $p>3$ and $\l \geq \frac{p+3}{p-3}$. 
\end{rem}

\begin{rem}
 The proof of the CLT for $X_n = \sum_{i=1}^n f(\zeta_i)$ using the regenerative structure as in the proof above shows that $\mu_f$ and $\s_f$ as defined in \eqref{mufsfdef} must also have the alternative expressions 
\begin{equation}\label{mufsfdef2}
 \mu_f = \frac{E_o[\sum_{i=1}^{\tau_1} f(\zeta_i) ]}{E_o[\tau_1]} \quad \text{and}\quad \s_f^2 = \frac{\E_o\left[ \left( \sum_{i=1}^{\tau_1} f(\zeta_i) - \mu_f \tau_1 \right)^2 \right]}{\E_o[\tau_1]}.  
\end{equation}
The equality of the expressions in \eqref{mufsfdef} and \eqref{mufsfdef2} can also be verified more directly using the representation of the stationary distribution $\pi(x) = \frac{1}{\E_o[\tau_1]} \E_o\left[ \sum_{i=1}^{\tau_1} \ind{\zeta_i = x} \right]$.
% and the fact that the assumptions on the integrability of $f$ and the decay of the strong mixing conditions are enough to justify several uses of Fubini's Theorem.
\end{rem}

\section{Application to RWRE: Annealed CLT rates}\label{sec:RWRE}

In this section we will show how the results of Theorem \ref{th:BEregCLT} can be applied to certain non-Markovian random walks.
For simplicity we will restrict ourselves to nearest neighbor RWRE, though clearly the same arguments will apply to other non-Markovian random walks with a similar regeneration structure and known bounds on the moments of regeneration times (e.g. excited random walks \cite{brCLTERW,kzPNERW}).

We begin by recalling the model of random walks in random environments. 
%For simplicity we will restrict ourselves to nearest neighbor RWRE, though clearly the results will apply to RWRE with non-nearest neighbor jumps as well. 
For nearest-neighbor RWRE on $\Z^d$, an \emph{environment} $\w$ is a collection of probability distributions on $\mathcal{E}_d = \{\mathbf{z} \in \Z^d: |\mathbf{z}|=1\}$ indexed by the vertices of $\Z^d$. That is, $\w=\{\w_\mathbf{x}(\mathbf{z})\}_{\mathbf{x} \in \Z^d, \mathbf{z} \in \mathcal{E}_d}$ such that $\w_{\mathbf{x}}(\mathbf{z}) \geq 0$ and $\sum_{\mathbf{z} \in \mathcal{E}_d} \w_\mathbf{x}(\mathbf{z}) = 1$ for every $\mathbf{x} \in \Z^d$. Given an environment $\w$, a random walk in the environment $\w$ is a Markov chain $\{\mathbf{X}_n\}_{n\geq 0}$ on $\Z^d$ with law $P_\w$ given by 
\[
 P_\w( \mathbf{X}_0 = \mathbf{0}) = 1
\quad \text{and}\quad 
P_\w( \mathbf{X}_{n+1} = \mathbf{x}+\mathbf{z} \, |\, X_n = \mathbf{x}) = \w_\mathbf{x}(\mathbf{z}), \quad \forall \mathbf{x} \in \Z^d, \, \mathbf{z} \in \mathcal{E}_d, \, n \geq 0. 
\]
A random walk in a random environment is then obtained by first choosing an environment $\w$ randomly according to some fixed probability distribution $P$ on the space of environments and then running a random walk in that fixed environment.
In general it is assumed that the distribution on environments $P$ is ergodic under spatial shifts of $\Z^d$, but for this paper we will adopt the common assumption that the environment is i.i.d.\ -- that is, the family $\{\w_{\mathbf{x}}(\cdot)\}_{\mathbf{x} \in \Z^d}$ of transition probabilities indexed by the vertices of $\Z^d$ is i.i.d.\ under the distribution $P$ on environments.
%The RWRE is said to have an \emph{i.i.d.\ environment} if the distribution $P$ on environments is such that $\{\w_{\mathbf{x}}(\cdot)\}_{\mathbf{x} \in \Z^d}$ is i.i.d. 
The distribution $P_\w$ of the walk conditioned on the environment $\w$ is called the \emph{quenched} law, while the distribution 
\begin{equation}\label{annealeddef}
 \P(\cdot) = E\left[ P_\w(\cdot) \right], 
\end{equation}
where both the environment and the walk are random is called the \emph{annealed} (or averaged) law of the RWRE. 
Note that in \eqref{annealeddef} and below $E[\cdot]$ will denote expectation with respect to the distribution $P$ on environments. Expectations with respect to the quenched and annealed laws on the RWRE will be denoted by $E_\w[\cdot]$ and $\E[\cdot]$ respectively.  

While the (multidimensional) Central Limit Theorem implies that classical simple random walks on $\Z^d$ always have a Gaussian limiting distributions under diffusive scaling, random walks in random environments (RWRE) on $\Z^d$ are much more difficult to study and can have limiting distributions which are non-Gaussian (see for instance \cite{kksStable,sRecurrent,bcFKRWRC}).
Nonetheless, there are sufficient conditions for the distribution on the environment which ensure that a CLT holds for the RWRE.
% (we will review some of these conditions in Section \ref{sec:RWRE}). 

Our main goal in this section is to consider some ballistic (non-zero limiting speed) RWRE for which a CLT is known to hold under the annealed measure and to prove polynomial rates of convergence for this CLT.
While the limiting distributions of RWRE have been studied quite extensively, there has been up until recently very few results giving quantitative bounds on the rates of convergence. In particular, we are only aware of two such prior results for RWRE \cite{mQCLTRWRC,apQCLTrates}. 
Our results below differ from both of these in the following ways. The results in \cite{mQCLTRWRC} considered the random conductance model while our results are applied to certain RWRE in i.i.d.\ environments. Also, the results in \cite{apQCLTrates} gave bounds on the polynomial rate  of convergence for the \emph{quenched} CLT of one-dimensional RWRE while we consider in this paper the rates of convergence for the \emph{annealed} CLT and apply to certain multidimensional RWRE as well. 
A more in depth discussion of the relation between the quenched and annealed rates of convergence for RWRE is given at the end of this paper in Section \ref{sec:disc}.

To apply the results of Theorem \ref{th:BEregCLT} to RWRE, we need to first review the appropriate concepts of regeneration times for RWRE.
%% and what moment bounds are known. 
If $\{\mathbf{X}_n\}_{n\geq 0}$ is a RWRE on $\Z^d$ and $\mathbf{u} \in S^{d-1} = \{\mathbf{z} \in \R^d: |\mathbf{z}| = 1\}$ is a fixed direction, then, %the sequence of random variables
setting\footnote{In this definition we are using the convention that $\inf \emptyset = \infty$; that is, if $\tau_{\mathbf{u},k} = \infty$ for some $k$ then $\tau_{\mathbf{u},k+1}$ is taken to be $\infty$ also.}
\begin{equation}\label{regseq}
 \tau_{\mathbf{u},0}=0, \quad \tau_{\mathbf{u},k} = \inf\left\{ n> \tau_{\mathbf{u},k-1}: \sup_{m<n} \mathbf{X}_m \cdot \mathbf{u}  < \mathbf{X}_n \cdot \mathbf{u}  \leq \inf_{m\geq n} \mathbf{X}_m \cdot \mathbf{u}  \right\}, \quad k\geq 1,
\end{equation}
it is known that on the event $A_{\mathbf{u}} = \{\lim_{n\to\infty} X_n \cdot \mathbf{u} = \infty \}$ (that is, when the RWRE is transient in direction $\mathbf{u}$),  the random variables $\{\tau_{\mathbf{u},k}\}_{k\ge 1}$ are almost surely finite \cite{szLLN}, and they are regeneration times for the RWRE under the annealed law $\P$.
%are called \emph{regeneration times in direction $\mathbf{u}$}
 Moreover, the regeneration times reveal the following i.i.d.\ structure within the RWRE: under the conditional measure $\P( \cdot \, | \, A_{\mathbf{u}} )$ the sequence of the sections of the path of the walk between regeneration times
\[
 \left\{ \left( (\mathbf{X}_m - \mathbf{X}_{\tau_{\mathbf{u},k}})_{\tau_{\mathbf{u},k}\leq m \leq \tau_{\mathbf{u},k+1}}, \, \tau_{\mathbf{u},k+1} - \tau_{\mathbf{u},k} \right) \right\}_{k\geq 0} 
\]
is independent for $k\geq 0$ and identically distributed for $k\geq 1$. 
\begin{comment}
Using this i.i.d.\ structure of the regeneration times it is easy to prove the following LLN and CLT under moment assumptions on the regeneration times. 
\begin{thm}\label{th:LLNCLT}[{\color{red}Give reference...}]
 Assume that $\mathbf{u} \in S^{d-1}$ is such that $\P(A_{\mathbf{u}}) = 1$
 \begin{itemize}
  \item \textbf{LLN:} If $\E[\tau_{\mathbf{u},1}] < \infty$, then $\lim_{n\to\infty} \frac{\mathbf{X}_n}{n} = \mathbf{\bvp} \neq \mathbf{0}$, almost surely, where 
 \[
  \bvp = \frac{ \E[ \mathbf{X}_{\tau_{\mathbf{u},2}} - \mathbf{X}_{\tau_{\mathbf{u},1}} ] }{ \E[ \tau_{\mathbf{u},2} - \tau_{\mathbf{u},1} ] }
 \] 
  \item \textbf{CLT:} If  $\E[\tau_{\mathbf{u},1}] < \infty$ and $\bE[(\tau_{\mathbf{u},2} - \tau_{\mathbf{u},1} )^2] < \infty$ then $\frac{X_n - n \bvp}{\sqrt{n}}$ converges in distribution to a $d$-dimensional Normal distribution with zero mean and covariance vector 
  \[
   \Sigma = \frac{1}{\E[\tau_{\mathbf{u},2} - \tau_{\mathbf{u},1}]} \E\left[ \left( \mathbf{X}_{\tau_{\mathbf{u},2}} - \mathbf{X}_{\tau_{\mathbf{u},1}} - (\tau_{\mathbf{u},2} - \tau_{\mathbf{u},1})\bvp \right)' \left( \mathbf{X}_{\tau_{\mathbf{u},2}} - \mathbf{X}_{\tau_{\mathbf{u},1}} - (\tau_{\mathbf{u},2} - \tau_{\mathbf{u},1})\bvp \right) \right]. 
  \]
 \end{itemize}
\end{thm}
\end{comment}
With this i.i.d.\ structure, the following results are known. 
\begin{itemize}
 \item \textbf{LLN \cite{szLLN}:} If $\E[\tau_{\mathbf{u},2}-\tau_{\mathbf{u},1}] < \infty$, then $\lim_{n\to\infty} \frac{\mathbf{X}_n}{n} = \mathbf{\bvp} \neq \mathbf{0}$, almost surely, where 
 \begin{equation}\label{bvpdef}
  \bvp = \frac{ \E[ \mathbf{X}_{\tau_{\mathbf{u},2}} - \mathbf{X}_{\tau_{\mathbf{u},1}} ] }{ \E[ \tau_{\mathbf{u},2} - \tau_{\mathbf{u},1} ] }.
 \end{equation}
  \item \textbf{CLT \cite{sSlowdown}:} If  
%  $\E[\tau_{\mathbf{u},1}] < \infty$ and 
$\E[(\tau_{\mathbf{u},2} - \tau_{\mathbf{u},1} )^2] < \infty$ then $\frac{X_n - n \bvp}{\sqrt{n}}$ converges in distribution under the annealed law $\P$ to a $d$-dimensional Normal distribution with zero mean and covariance matrix
  \begin{equation}\label{RWREcov}
   \Sigma = \frac{1}{\E[\tau_{\mathbf{u},2} - \tau_{\mathbf{u},1}]} \E\left[ \left( \mathbf{X}_{\tau_{\mathbf{u},2}} - \mathbf{X}_{\tau_{\mathbf{u},1}} - (\tau_{\mathbf{u},2} - \tau_{\mathbf{u},1})\bvp \right) \left( \mathbf{X}_{\tau_{\mathbf{u},2}} - \mathbf{X}_{\tau_{\mathbf{u},1}} - (\tau_{\mathbf{u},2} - \tau_{\mathbf{u},1})\bvp \right)^T \right]. 
  \end{equation}
 \end{itemize}
 
Since Theorem \ref{th:BEregCLT} gives rates of convergence for a one-dimensional CLT, we can only apply this to one-dimensional projections of a multidimensional RWRE. 
To this end, suppose that there is a direction $\mathbf{u} \in S^{d-1}$ such that $\P(A_{\mathbf{u}}) = 1$. 
Then for any other direction $\mathbf{w} \in S^{d-1}$ we can apply Theorem \ref{th:BEregCLT} to the sequence 
%$\xi_n = (\mathbf{X}_n - \mathbf{X}_{n-1}) \cdot \mathbf{w}$.
$\mathbf{X}_n \cdot \mathbf{w}$. 
\begin{thm}\label{th:RWRErates}
 Let $\mathbf{X}_n$ be a $d$-dimensional RWRE, and let $\mathbf{u} \in S^{d-1}$ be such that 
 \begin{equation}\label{taumb}
  \E\left[ (\tau_{\mathbf{u},2} - \tau_{\mathbf{u},1})^{2+\d} \right] < \infty
  \quad\text{and}\quad 
  \E\left[ \tau_{\mathbf{u},1}^\d \right] < \infty
 \end{equation}
 for some $\d \in (0,1]$. 
% Then, for any $\mathbf{w} \in S^{d-1}$ there exists a constant $C<\infty$ such that 
Then, there exists a constant $C<\infty$ such that for any $\mathbf{w} \in S^{d-1}$, 
 \[
  \sup_{x \in \R} \left| \P\left( \frac{(\mathbf{X}_n - n\bvp)\cdot \mathbf{w}}{\s_{\mathbf{w}} \sqrt{n}} \leq x \right) - \Phi(x) \right| \leq \frac{C}{n^{\d/2}}, 
 \]
 where $\bvp$ is as in \eqref{bvpdef} and $\s_{\mathbf{w}}^2 = \mathbf{w}^T \Sigma \mathbf{w}$ where $\Sigma$ is the covariance matrix in \eqref{RWREcov}. 
\end{thm}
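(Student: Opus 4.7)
The plan is a direct application of Theorem \ref{th:BEregCLT} to the projected walk. I set $X_n := \mathbf{X}_n \cdot \mathbf{w}$ and $\tau_k := \tau_{\mathbf{u},k}$. The i.i.d.\ decomposition of the inter-regeneration path segments recalled just before the theorem statement makes $\{X_n\}_{n\ge 0}$ a regenerative process under $\P$ in the sense of Section 1, and the associated measure $\bP$ from \eqref{bPdef} coincides with the joint law of $((\mathbf{X}_{\tau_{\mathbf{u},1}+n}-\mathbf{X}_{\tau_{\mathbf{u},1}})\cdot\mathbf{w})_{n\ge 0}$ together with $(\tau_{\mathbf{u},1+k}-\tau_{\mathbf{u},1})_{k\ge 1}$.

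Next I verify the four moment hypotheses of Theorem \ref{th:BEregCLT}. The key observation is that the walk is nearest-neighbor and $|\mathbf{w}|=1$, so the one-dimensional increments $\xi_i = (\mathbf{X}_i-\mathbf{X}_{i-1})\cdot\mathbf{w}$ satisfy $|\xi_i| \le 1$, which yields the pathwise bounds $\sum_{i=1}^{\tau_1}|\xi_i| \le \tau_1$ and $|X_{\tau_1}| \le \tau_1$. Consequently,
\[
\bE\!\left[\Bigl(\sum_{i=1}^{\tau_1}|\xi_i|\Bigr)^{2+\d}\right] \le \bE[\tau_1^{2+\d}] = \E\!\left[(\tau_{\mathbf{u},2}-\tau_{\mathbf{u},1})^{2+\d}\right] < \infty,
\]
and $\E[|X_{\tau_1}|^\d] \le \E[\tau_1^\d] = \E[\tau_{\mathbf{u},1}^\d] < \infty$, with the two remaining hypotheses being exactly \eqref{taumb}.

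Finally I identify the limiting parameters produced by Theorem \ref{th:BEregCLT}. From \eqref{eq:regLLN} and \eqref{bvpdef}, $\mu = \bE[X_{\tau_1}]/\bE[\tau_1] = \bvp\cdot\mathbf{w}$, matching the centering in the statement; and from \eqref{eq:regCLT} together with the definition \eqref{RWREcov} of $\Sigma$,
\[
\s^2 = \frac{\bE[(X_{\tau_1}-\tau_1\mu)^2]}{\bE[\tau_1]} = \mathbf{w}^T \Sigma \mathbf{w} = \s_{\mathbf{w}}^2.
\]
Theorem \ref{th:BEregCLT} then yields the claimed bound with rate $n^{-\d/2}$. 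I do not anticipate any real obstacle here: this is essentially a black-box reduction once Theorem \ref{th:BEregCLT} is in hand, the only subtleties being the implicit nondegeneracy assumption $\s_{\mathbf{w}}>0$ inherited from \eqref{eq:regCLT} and the identification of $\bP$ with the annealed law shifted past the first regeneration time, both of which are immediate from the regeneration structure.
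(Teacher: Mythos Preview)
Your proposal is correct and matches the paper's approach exactly: the paper does not give a separate proof of this theorem but simply observes in the remark following it that the nearest-neighbor property gives $|\xi_i|\le 1$, so the moment hypotheses of Theorem \ref{th:BEregCLT} reduce to \eqref{taumb}. Your write-up is in fact slightly more thorough, since you also spell out the identification $\mu=\bvp\cdot\mathbf{w}$ and $\s^2=\s_{\mathbf{w}}^2$.
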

\begin{rem}
The following remarks are in order regarding the moment assumptions \eqref{taumb} in Theorem \ref{th:RWRErates}. 
\begin{itemize}
\item  Since the RWRE is a nearest neighbor walk, the random variables 
$\xi_n = (\mathbf{X}_n - \mathbf{X}_{n-1}) \cdot \mathbf{w}$ have
$|\xi_n| \leq 1$ and so the moment bounds in \eqref{taumb} are enough to satisfy the assumptions of Theorem \ref{th:BEregCLT}. 
\item For one-dimensional RWRE, it can be shown under mild ballisticity condition that the requirement \eqref{taumb} is equivalent to $\E[\tau_1^{2+\delta}]<\infty$. See Proposition~\ref{1dregmb}.
\item When the dimension $d\ge 2$, for {\it uniformly elliptic} and ballistic environment,  it is conjectured that all moments of the regeneration times are finite. However, this is not true when the ballistic environment is only assumed to be {\it elliptic}. See the following for more detailed comments.
\end{itemize}
\end{rem}
\begin{comment}
{\color{red}Question: Can the constant $C$ be chosen independent of the direction $\mathbf{w}$? \\
Answer: Yes, I believe so (if the constant in Theorem \ref{th:BEregCLT} depends only on moment estimates). 
For any $p > 0$ and $\mathbf{w} \in S^{d-1}$, 
\[
 \left( \sum_{i=1}^{\tau_{\mathbf{u},1}} \left| (X_i - X_{i-1}) \cdot \mathbf{w} \right| \right)^p
\leq \left( \sum_{i=1}^{\tau_{\mathbf{u},1}} \left| (X_i - X_{i-1}) \right| \right)^p 
= \tau_{\mathbf{u},1}^p. 
\]
For for any direction $\mathbf{w}$, we can obtain upper bounds only in terms of the moments of $\tau_1$. 
}
\end{comment}

Theorem \ref{th:RWRErates} reduces the problem of obtaining rates of convergence for the annealed CLT to computing certain moment bounds of the regeneration times. 
%For one-dimensional RWRE, we will be able to give a rather explicit description of necessary and sufficient conditions for the moment bounds in \eqref{taumb} to hold (see Subsection \ref{sec:1dRWRE} below). 
For multidimensional RWRE, a great deal of effort has gone into obtaining improved conditions under which moment bounds on regeneration times can be obtained and we will review the best known conditions here, though the full picture is not yet complete. 

\begin{enumerate}
 \item \textbf{Uniformly elliptic environments.}
A nearest neighbor RWRE is called \emph{uniformly elliptic} if there exists a constant $c>0$ such that $P(\w_0(\mathbf{z}) \geq c ) = 1$ for all $|\mathbf{z}| = 1$; that is, the transition probabilities in all directions are uniformly bounded away from zero.  
For uniformly elliptic RWRE, a number of conditions have been shown to imply that $\E[\tau_{\mathbf{v},1}^p] < \infty$ for all $p < \infty$ where $\mathbf{v}\neq \mathbf{0}$ is the limiting speed; these conditions include Kalikow's condition \cite{sSlowdown}, Sznitman's conditions $(T)$, $(T')$ and $(T)_\gamma$ \cite{sConditionT,sEffective}, and the Polynomial condition $(P)$ introduced by Berger, Drewitz, and Ram\'irez \cite{bdrEPBC}. 

We refer the interested reader to the above references for the exact statement of these conditions and simply note that the weakest condition is the polynomial condition $(P)$ and that this condition is ``effective'' in the sense that it can be verified by computing certain exit probabilities of the RWRE from a large but finite multidimensional box. 
We also note that all of the known conditions implying ballisticity (non-zero limiting speed) for uniformly elliptic RWRE imply moments of all orders for the regeneration times. In fact it is conjectured that for uniformly elliptic RWRE in dimension $d\geq 2$ that 
$\P(A_{\mathbf{u}})=1$ (i.e., transience in direction $\mathbf{u}$)
%$\E[ \tau_{\mathbf{u},2} - \tau_{\mathbf{u},1} ] < \infty$ 
implies that $\E[ \tau_{\mathbf{u},1}^p ] < \infty$ for all $p<\infty$. 
This is in contrast to what is known for one-dimensional RWRE (see Proposition \ref{1dregmb} below) and for multidimensional RWRE which are not uniformly elliptic. 

 \item \textbf{Elliptic environments.} A nearest neighbor RWRE is called \emph{elliptic} if $P(\w_0(\mathbf{z}) > 0 ) = 1$ for all $|\mathbf{z}| = 1$; that is, the transition probabilities in all directions are non-zero but not necessarily uniformly bounded away from zero. 
In \cite{brsSECBB} and \cite{fkLTRWRE}, checkable ellipticity conditions are given which together with the polynomial condition $(P)$ imply the finiteness of certain moments of the regeneration times. 
Moreover, these papers also give explicit examples of elliptic RWRE which satisfy condition $(P)$ or even the stronger Kalikow's condition but which do not have all moments of regeneration times finite. In particular, for i.i.d.\ Dirichlet random environments there are certain choices of the parameters for which the results in \cite{brsSECBB} show that the regeneration times have infinite third moment but finite $(2+\d)$ moments for some $\d \in (0,1)$.
\end{enumerate}

\subsection{One-dimensional RWRE}\label{sec:1dRWRE}
The purpose of this subsection is to consider more in depth the annealed CLT rates of convergence for one-dimensional RWRE. In one dimension we are able to obtain more explicit results as a result of the fact that it is possible to give an explicit criterion for what moments of the regeneration times of the RWRE are finite (see Proposition \ref{1dregmb} below). 
Our main result in this subsection (Corollary~\ref{cor:1dratesX}) gives explicit polynomial rates of convergence for the annealed CLTs of both the position and the hitting times of the walk. 
%{\color{red} The purpose of this subsection is to obtain rates of convergence (Corollary~\ref{cor:1dratesX}) for annealed CLTs of one-dimensional RWRE.}

For a RWRE on $\Z$ there is no need to take a projection to apply Theorem \ref{th:BEregCLT} and so we will write $X_n$ for the position of the walk rather than $\mathbf{X}_n$. Also, if the walk is transient, without loss of generality we can assume it is transient to the right and so we need only consider regeneration times to the right and will therefore write $\tau_k$ rather than $\tau_{1,k}$. 

For one-dimensional RWRE in i.i.d.\ environments, much of the behavior of the walk can be explicitly characterized in terms of the distribution of the random variable $\rho = \frac{\w_0(-1)}{\w_0(1)}$. 
In particular, it was shown in \cite{sRWRE,kksStable} that
\begin{itemize}
 \item the random walk is transient to the right if and only if $E[\log \rho] < 0$,
 \item the limiting speed $\vp=\lim_{n\to\infty} \frac{X_n}{n}$ is positive if and only if $E[\rho] < 1$ with the explicit formula $\vp = \frac{1-E[\rho]}{1+E[\rho]}$ for the speed, 
 %\[
 % \vp = \frac{1-E[\rho]}{1+E[\rho]},
 %\]
 \item and if $E[\rho^2] < 1$ then annealed CLTs hold both for the position of the walk $X_n$ and the hitting times $T_n = \inf\{ k\geq 0: \, X_k = n\}$. That is, 
\begin{equation}\label{1daCLT}
\lim_{n\to\infty} \P\left( \frac{X_n - n\vp}{\vp^{3/2} \s_0 \sqrt{n}} \leq x \right) = \Phi(x)
\quad \text{and}\quad 
\lim_{n\to\infty} \P\left( \frac{T_n - n/\vp}{\s_0 \sqrt{n}} \leq x \right) = \Phi(x), \quad \forall x \in \R, 
\end{equation}
where $\s_0^2 = E[\Var_\w(T_1)] + \frac{1}{\vp} \Var(E_\w[T_1])$. 
% \item and if $E[\rho^2] < 1$ then
% \begin{equation}\label{1dCLT}
% \lim_{n\to\infty}  \P\left(\frac{X_n - n\vp}{\s \sqrt{n}} \leq x \right) = \Phi(x), \quad \forall x \in \R, 
% \quad \text{where }
%\s_0^2 = \frac{4E[\rho](1-E[\rho^2])+8\Var(\rho)}{(1+E[\rho])^2(1-E[\rho^2])}. 
% \end{equation}
\end{itemize}
In fact the CLTs in \eqref{1daCLT} are a specific case of a more general result on limiting distributions by Kesten, Kozlov, and Spitzer \cite{kksStable}. If the RWRE is transient to the right (i.e., $E[\log \rho]<0$) then, under mild technical assumptions on the distribution on the environment, the limiting distribution depends on a parameter $\kappa>0$ which is the unique positive solution to $E[\rho^\k] = 1$. 
The assumption $E[\rho^2]<1$ is equivalent to $\kappa>2$, and this is the only case where annealed CLTs like \eqref{1daCLT} hold; if $\k\in (0,2)$ then the limiting distribution is not Gaussian and the scaling is not diffusive, while if $\k = 2$ then the limting distributions of $X_n$ and $T_n$ are Gaussian but with logarithmic corrections to the diffusive scaling $\sqrt{n}$. 
When $\k>2$, the following Corollary of Theorems \ref{th:BEregCLT} and \ref{th:RWRErates} gives polynomial rates of convergence for both of the annealed CLTs in \eqref{1daCLT}. 
%The main result in this subsection is the following corollary of Theorems \ref{th:BEregCLT} and \ref{th:RWRErates} which gives polynomial rates of convergence for both of the annealed CLTs in \eqref{1daCLT}. 

\begin{cor}\label{cor:1dratesX}
Assume that $E[\log \rho]<0$ and $E[\rho^\kappa] = 1$ for some $\kappa>2$. 
 \begin{enumerate}
  \item If $\k > 3$, then there exists a constant $C<\infty$ such that 
 \[\tag{1a}
  \sup_{x \in \R} \left| \P\left(\frac{X_n - n\vp}{\s_0 \vp^{3/2} \sqrt{n}} \leq x \right) - \Phi(x) \right| \leq \frac{C}{\sqrt{n}}
 \]
 and 
   \[\tag{1b}
   \sup_{x\in \R} \left| \P\left( \frac{T_n - n/\vp}{\s_0 \sqrt{n}} \leq x \right) - \Phi(x) \right| \leq \frac{C}{\sqrt{n}}. 
  \]
  \item If $\k \in (2,3]$, then   for any $\e>0$,
  \[\tag{2a}
  \lim_{n\to\infty} n^{\frac{\k}{2} - 1 - \e} \sup_{x \in \R} \left| \P\left(\frac{X_n - n\vp}{\s_0 \vp^{3/2} \sqrt{n}} \leq x \right) - \Phi(x) \right| = 0 
 \]
 and 
   \[\tag{2b}
   \lim_{n\to\infty} n^{\frac{\k}{2}-1-\e} \sup_{x \in \R} \left| \P\left( \frac{T_n - n/\vp}{\s_0 \sqrt{n}} \leq x \right) - \Phi(x) \right| = 0. 
  \]
 \end{enumerate}
 \end{cor}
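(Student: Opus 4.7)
The plan is to deduce parts (1a) and (2a) directly from Theorem~\ref{th:RWRErates}, and parts (1b) and (2b) by realizing $(T_n)_{n\ge 0}$ itself as a regenerative process (indexed by spatial position) and invoking Theorem~\ref{th:BEregCLT}. In both settings the rate is governed by the largest admissible $\delta \in (0,1]$, and everything reduces to a moment estimate for $\tau_1$.

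Under the assumption $E[\rho^\kappa]=1$ with $\kappa>2$, a classical fact for one-dimensional RWRE (going back to \cite{kksStable}) is that $\E[\tau_1^p]<\infty$ if and only if $p<\kappa$. Combined with Proposition~\ref{1dregmb}, which identifies the joint moment hypothesis \eqref{taumb} with the single condition $\E[\tau_1^{2+\delta}]<\infty$, this allows us to take $\delta=1$ in Theorem~\ref{th:RWRErates} when $\kappa>3$, giving (1a) with rate $n^{-1/2}$. For $\kappa\in(2,3]$ and given $\epsilon>0$ (which we may assume is less than $\kappa-2$, since otherwise (2a) is trivial), we take $\delta = \kappa-2-\epsilon \in (0,1)$; then Theorem~\ref{th:RWRErates} gives a rate $n^{-\delta/2}$ and multiplying by $n^{\kappa/2-1-\epsilon}$ produces a factor of the order $n^{-\epsilon/2}\to 0$, yielding (2a).

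For the hitting times I would view $(T_n)_{n \ge 0}$ as a real-valued regenerative process in the sense of the introduction, with integer-valued regeneration times $M_k := X_{\tau_k}$ and increments $\xi_i := T_i - T_{i-1} \ge 1$. Since $\tau_k$ is the first time at which the walk reaches the fresh maximum $M_k$, we have $T_{M_k}=\tau_k$, and the standard i.i.d.\ property of the walk's regeneration blocks transfers to the pairs $(M_{k+1}-M_k,\,\tau_{k+1}-\tau_k)$ under $\bP$. Because the walk is nearest neighbor, $1 \le M_1 \le \tau_1$ and $\sum_{i=1}^{M_1}\xi_i = T_{M_1}=\tau_1$, so all four of the moment hypotheses of Theorem~\ref{th:BEregCLT} are dominated by the corresponding moments of $\tau_1$ under $\P$ and $\bP$, and are controlled by the same input as in the previous paragraph. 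Applying Theorem~\ref{th:BEregCLT} with the same choice of $\delta$ as for the $X_n$ case then delivers (1b) and (2b).

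The main technical point I expect to need attention is identifying the limiting variance $\bE[(\tau_1 - M_1/\vp)^2]/\bE[M_1]$ produced by Theorem~\ref{th:BEregCLT} with the constant $\s_0^2 = E[\Var_\omega(T_1)] + \vp^{-1}\Var(E_\omega[T_1])$ from \eqref{1daCLT}. Since both arise as the limiting variance in the same annealed CLT for $T_n$, uniqueness of Gaussian limits forces equality; a direct verification via the strong Markov property at $\tau_1$ is also available, and the analogous matching of the scaling $\vp^{3/2}\s_0$ in (1a) and (2a) is handled the same way.
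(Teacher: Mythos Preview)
Your proposal is correct and follows essentially the same approach as the paper: you derive (1a)/(2a) from Theorem~\ref{th:RWRErates} via the moment bound of Proposition~\ref{1dregmb}, and (1b)/(2b) by treating $(T_n)_n$ as a regenerative process with regeneration indices $M_k=X_{\tau_k}$ and applying Theorem~\ref{th:BEregCLT}, noting that $\sum_{i=M_{k-1}+1}^{M_k}(T_i-T_{i-1})=\tau_k-\tau_{k-1}$ reduces the required moment hypotheses to those of $\tau_1$. One small remark: the moment characterization $\E[\tau_1^p]<\infty \iff p<\kappa$ is exactly the content of Proposition~\ref{1dregmb} (proved in the paper rather than taken from \cite{kksStable}), so you can simply invoke it directly rather than treating it as a separate classical input.
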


 The key to the proof of Corollary~\ref{cor:1dratesX} will be establishing moment bounds for the regeneration times of the RWRE in terms of the parameter $\kappa$. 
As a first step in this direction, the following lemma shows that $\kappa$ determines what moments of hitting times are finite. 
%To prove Corollary~\ref{cor:1dratesX}, we need a lemma and a proposition.
%The relevance of the parameter $\k$ to the limiting distributions of the one-dimensional RWRE is that $\kappa$ determines what moments of hitting times are finite. 
\begin{lem}\label{lem:ETmb}
 Assume that $E[\log\rho] < 0$ and that $E[\rho^\k] = 1$ for some $\k\geq 1$. Then $E[T_1^\gamma] < \infty$ if and only if $\gamma < \k$. 
\end{lem}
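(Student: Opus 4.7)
The plan is to reduce the moment question for $T_1$ to a tail question for a perpetuity, and then invoke Kesten's theorem. The starting point is the classical formula for the quenched mean hitting time of a one-dimensional nearest-neighbor RWRE (obtained by solving the difference equation $u(x)=1+p_x u(x+1)+q_x u(x-1)$ with boundary condition $u(1)=0$),
\[
E_\w[T_1]=1+2R,\qquad R:=\sum_{k=1}^{\infty}\prod_{j=0}^{k-1}\rho_{-j}.
\]
The key structural observation is that, by the i.i.d.\ structure of the environment, $R$ satisfies the Kesten-type random affine recursion $R\stackrel{d}{=}\rho_0(1+R')$, with $R'$ an independent copy of $R$ that is independent of $\rho_0$. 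Under the hypotheses $E[\log\rho]<0$ and $E[\rho^\k]=1$ with $\k\ge 1$, Kesten's theorem on the tail of perpetuities yields $\P(R>t)\sim C_0 t^{-\k}$ as $t\to\infty$ (after ruling out the arithmetic case, or otherwise using the polynomial upper/lower bound that Kesten's theorem still provides). In particular $E[R^\gamma]<\infty$ iff $\gamma<\k$.

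For the necessity direction, I would apply Jensen's inequality conditionally on the environment: for $\gamma\ge 1$,
\[
E[T_1^\gamma]=E\bigl[E_\w[T_1^\gamma]\bigr]\ge E\bigl[E_\w[T_1]^\gamma\bigr]=E[(1+2R)^\gamma],
\]
which is infinite whenever $\gamma\ge\k$. Since $\k\ge 1$ by assumption, every nontrivial case of necessity falls into $\gamma\ge 1$, so this single application of Jensen suffices.

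For the sufficiency direction when $\gamma\in(0,1]$, the concavity of $x\mapsto x^\gamma$ gives the reverse bound $E_\w[T_1^\gamma]\le E_\w[T_1]^\gamma=(1+2R)^\gamma$, hence $E[T_1^\gamma]\le E[(1+2R)^\gamma]<\infty$ whenever $\gamma<\k$. The harder case is $\gamma\in(1,\k)$, where I must control the quenched fluctuations of $T_1$ around its mean $1+2R$. My plan is to decompose $T_1$ via its excursion structure: starting from $0$, the walk makes a geometric-in-$\w$ number of failed attempts before a successful step to $1$, each failed attempt contributing a left excursion whose length is distributed (conditionally on $\w$) as the hitting time of $0$ from $-1$ in the shifted environment. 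Iterating this decomposition and computing variances along the way produces recursive bounds of the form $E_\w[T_1^n]\le C_n(1+R)^{n}\cdot(1+\widetilde R_n)$, where $\widetilde R_n$ is a finite sum of perpetuity-like quantities in $\{\rho_{-j}\}$ whose annealed $\gamma/n$-moment is finite whenever $\gamma<\k$ (by the same Kesten estimate applied to the auxiliary recursions).

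The main obstacle is to do the bookkeeping in the previous paragraph carefully enough that the exponent $n$ in the quenched moment bound is not inflated beyond $\gamma<\k$. An alternative, and probably cleaner, route is to bypass the direct moment bound and instead prove the annealed tail estimate $\P(T_1>t)\le C t^{-\gamma}$ for every $\gamma<\k$ by writing
\[
\P(T_1>t)\le \P\bigl(R>\tfrac{t}{4}\bigr)+\E\bigl[P_\w(T_1>t)\,\mathbf{1}_{\{R\le t/4\}}\bigr],
\]
controlling the first term by Kesten's tail and the second by a quenched Chebyshev/Azuma estimate using that $E_\w[T_1]\le 1+t/2$ on the event $\{R\le t/4\}$. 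Either route delivers $E[T_1^\gamma]<\infty$ for all $\gamma<\k$, completing the proof.
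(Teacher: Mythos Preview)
Your overall strategy is sound, but the route diverges from the paper's in both directions, and one of your choices imports hypotheses the lemma does not assume.

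For the necessity direction you invoke Kesten's theorem on the tail of $R$. That works, but Kesten's theorem in its standard form requires the law of $\log\rho$ to be non-arithmetic and $E[\rho^\k\log^+\rho]<\infty$, neither of which is assumed here. The paper sidesteps this entirely with a two-line argument: from $E_\w[T_1]=1+2R$ and conditional Jensen (exactly as you wrote), it uses that for $\gamma\ge 1$ and nonnegative terms $(\sum_k a_k)^\gamma\ge\sum_k a_k^\gamma$ to get
\[
\E[T_1^\gamma]\;\ge\;E\bigl[(1+2R)^\gamma\bigr]\;\ge\;2^\gamma\sum_{k\ge 1}E\Bigl[\Bigl(\prod_{j=0}^{k-1}\rho_{-j}\Bigr)^\gamma\Bigr]\;=\;2^\gamma\sum_{k\ge 1}E[\rho^\gamma]^k,
\]
and then $E[\rho^\gamma]\ge E[\rho^\k]^{\gamma/\k}=1$ for $\gamma\ge\k$ by Jensen, so the series diverges. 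No renewal theory, no side conditions.

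For the sufficiency direction the paper does not argue at all: it simply cites the Dembo--Peres--Zeitouni tail estimates for $T_1$. Your concavity argument for $\gamma\le 1$ is clean and correct. Your two proposed routes for $\gamma\in(1,\k)$ are reasonable outlines but neither is yet a proof: the tail-splitting route needs a quantitative quenched deviation bound for $T_1$ on the event $\{R\le t/4\}$, and Markov with the first moment alone is not enough (it gives only $O(1)$); you would need to control $\Var_\w(T_1)$ or higher quenched moments in terms of further perpetuity-type series, which is precisely the content of the reference the paper cites. So your sketch is more informative than the paper's citation, but the hard step is still outsourced in both treatments.
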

\begin{proof}
 It was shown in \cite{dpzTE1D} that $\gamma < \k$ implies that $\E[T_1^\gamma] < \infty$. For the reverse implication, we will use the fact that the quenched expectation of $T_1$ has the explicit formula (see \cite{sRWRE} or \cite{zRWRE}),
\[
 E_\w[T_1] = 1 + 2 \sum_{k=1}^\infty \prod_{x=-k+1}^0 \frac{\w_x(-1)}{\w_x(1)}. 
\]
Therefore, if $\gamma \geq 1$
\begin{align*}
 \E[T_1^\gamma] \geq E\left[ \left( E_\w[T_1] \right)^\gamma \right] 
\geq 2^\gamma E\left[ \sum_{k=1}^\infty  \left(\prod_{x=-k+1}^0 \frac{\w_x(-1)}{\w_x(1)} \right)^\gamma \right] 
= 2^\gamma  \sum_{k=1}^\infty E[\rho^\gamma]^k,
\end{align*}
where we used that the environment is i.i.d.\ in the last equality. If $\gamma \geq \k$, then it follows from Jensen's inequality that $E[\rho^\gamma] \geq E[\rho^\k]^{\gamma/\k} = 1$, and thus the sum on the right above is infinite. 
%This proves that $\E[T_1^\gamma] = \infty$ for all $\gamma \geq \k$. 
\end{proof}

%It was shown in \cite{dpzTE1D} that 
%\begin{equation}\label{ET1mb}
% \E[T_1^\gamma] < \infty, \quad \forall \gamma < \k. 
%\end{equation}
%The following Proposition shows that the same is true for the moments of the regeneration times. 

The following Proposition shows that the parameter $\k$ also determines what moments of the regeneration times are finite. 
\begin{prop}\label{1dregmb}
 Assume that $E[\log\rho] < 0$ and that $E[\rho^\k]= 1$ for some $\k\geq 1$. Then $\E[\tau_1^\gamma] < \infty$ and $\E[(\tau_2-\tau_1)^\gamma] < \infty$ if and only if $\gamma < \k$. 
\end{prop}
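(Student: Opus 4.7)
The backbone of the argument is Lemma~\ref{lem:ETmb} together with the identity $\tau_1 = T_L$ where $L := X_{\tau_1}\geq 1$ is the level of the first regeneration. For the ``only if'' direction, the monotonicity $\tau_1 \geq T_1$ immediately gives $\E[\tau_1^\gamma]\geq \E[T_1^\gamma]$, so Lemma~\ref{lem:ETmb} forces $\gamma<\k$. To handle $\tau_2-\tau_1$, I would first establish the comparison $\E[\tau_1^\gamma] \leq C_1 + C_2\E[(\tau_2-\tau_1)^\gamma]$. Decomposing over $D = \{X_n \neq -1 \text{ for all } n \geq 1\}$ and its complement, the part on $D$ equals $\P(D)\,\E[(\tau_2-\tau_1)^\gamma]$ by the regenerative identity that $\tau_2-\tau_1$ has the same law as $\tau_1$ under $\bP = \P(\,\cdot\,|D)$. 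On $D^c$, I would break the walk at its last visit to its overall minimum $-M$: after this time the walk restarts in a shifted copy of the environment and regenerates in a time distributed like $\tau_1$ under $\bP$, while the initial overhead contributes a term controlled by moments of $H_{-1} := \inf\{n\geq 1: X_n=-1\}$ on $\{H_{-1}<\infty\}$, which can be handled by a left-right analogue of Lemma~\ref{lem:ETmb}.

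For the ``if'' direction, fix $\gamma \in (0,\k)$ and write
\[
 \tau_1 \;=\; T_L \;=\; \sum_{j=1}^L (T_j-T_{j-1}).
\]
By translation invariance of the i.i.d.\ environment each increment $T_j-T_{j-1}$ has the annealed law of $T_1$, so Lemma~\ref{lem:ETmb} makes $\E[T_1^{\gamma'}]<\infty$ available for every $\gamma' \in (\gamma,\k)$. Applying a power-mean inequality (when $\gamma\geq 1$) followed by H\"older's inequality with conjugate exponents $p=\gamma'/\gamma$ and $q=p/(p-1)$ yields
\[
 \E[\tau_1^\gamma] \;\leq\; C\sum_{j\geq 1} \E[(T_j-T_{j-1})^{\gamma'}]^{1/p}\,\E\!\left[L^{q(\gamma-1)_+}\,\indd{L\geq j}\right]^{1/q},
\]
which is finite as soon as $\P(L \geq j)$ decays sufficiently fast in $j$. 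The bound for $\tau_2-\tau_1$ then follows from the comparison established above.

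The main obstacle, and where the bulk of the work lies, is establishing fast tail decay for $L=X_{\tau_1}$ under $\P$. My plan is to use strong Markov at the successive hitting times $T_j$ together with the i.i.d.\ structure of the environment: each level $j$ carries annealed probability $\bar q := \P(L=1)>0$ of being a regeneration level, and by carefully decoupling the ``no-backtrack at level $j$'' events across $j$ (either by a direct independence argument or by dominating with an i.i.d.\ Bernoulli sequence) one should arrive at $\P(L>n)\leq C(1-\bar q)^n$. The dependence among these events through the shared environment and walk trajectory is the delicate technical point; in the borderline regime $\k=1$, only a polynomial tail on $L$ may be available, but this is still enough for the H\"older step provided $\gamma'$ is chosen close enough to $\k$.
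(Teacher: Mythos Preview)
Your ``if'' direction is essentially the paper's: decompose $\tau_1 = T_L$ over $L=X_{\tau_1}$, apply H\"older with an exponent $\gamma(1+\e)\in(\gamma,\k)$, and invoke Lemma~\ref{lem:ETmb}. The paper cites \cite[Prop.~2.6]{sConditionT} for the exponential bound $\P(X_{\tau_1}=n)\leq e^{-cn}$ rather than reproving it; this bound is purely structural (it follows from transience and the i.i.d.\ environment, not from any moment condition on $\rho$), so your worry about ``only a polynomial tail when $\k=1$'' is unfounded. To pass from $\E[\tau_1^\gamma]<\infty$ to $\E[(\tau_2-\tau_1)^\gamma]<\infty$ the paper uses only the trivial inequality $\P(D)\,\bE[\tau_1^\gamma]=\E[\tau_1^\gamma\ind{D}]\leq \E[\tau_1^\gamma]$, immediate from $\bP=\P(\,\cdot\mid D)$ with $D=\{T_{-1}=\infty\}$.

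Your ``only if'' direction has a genuine gap. The bound $\tau_1\geq T_1$ correctly gives $\E[\tau_1^\gamma]=\infty$ for $\gamma\geq\k$, but your route to $\bE[\tau_1^\gamma]=\infty$ via the reverse comparison $\E[\tau_1^\gamma]\leq C_1+C_2\,\bE[\tau_1^\gamma]$ breaks at the last-minimum decomposition: after the last visit to the overall minimum $-M$, the walk must climb back through sites $-M+1,\ldots,0,\ldots$ whose environment has \emph{already been explored} (the walk started at $0$ and wandered there before descending to $-M$), so the post-minimum walk is not in a fresh i.i.d.\ environment and does not have law $\bP$. The last visit to the minimum is also not a stopping time, so strong Markov is unavailable. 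The paper avoids this entirely with a short ergodic argument: for $\gamma\geq 1$,
\[
\bE[\tau_1^\gamma]=\lim_{n\to\infty}\frac{1}{n}\sum_{k=1}^n(\tau_k-\tau_{k-1})^\gamma
\geq \lim_{n\to\infty}\frac{1}{n}\sum_{x=1}^{X_{\tau_n}}(T_x-T_{x-1})^\gamma
=\bE[X_{\tau_1}]\,\E[T_1^\gamma]=\infty,
\]
using superadditivity of $t\mapsto t^\gamma$ inside each regeneration block and ergodicity of the crossing-time sequence $\{T_x-T_{x-1}\}_{x\geq 1}$ under the annealed measure.
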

%Proposition \ref{1dregmb} to Theorem \ref{th:RWRErates} for any $\d < \k-2$ gives the following rates of convergence for the one-dimensional annealed CLT.  
 
 \begin{proof}[Proof of Proposition \ref{1dregmb}]
In the context of one-dimensional RWRE, the measure $\bP$ as defined in \eqref{bPdef} for the regenerative sequence $X_n$ is the same as $\P(\cdot \, | \, T_{-1} = \infty)$. 
%Since $\P(T_{-1} = \infty) > 0$ if the RWRE is transient to the right, then 
Therefore, 
\[
 \E[ (\tau_2 - \tau_1)^\gamma ] = \bE[\tau_1^\gamma] = \frac{\E[\tau_1^\gamma \ind{T_{-1}=\infty}]}{\P(T_{-1}=\infty)},  
\]
which implies that $\P(T_{-1}=\infty) \bE[\tau_1^\gamma] \leq \E[\tau_1^\gamma]$. 
Since $\P(T_{-1}=\infty)>0$ when the RWRE is transient to the right, it follows that it is enough to prove that $\E[\tau_1^\gamma] < \infty$ if $\gamma < \k$
and $\bE[\tau_1^\gamma] = \infty$ if $\gamma \geq \k$. 
%For simplicity, and since we are primarily interested in applications when $\kappa > 2$, we will give the proof that $\E[\tau_1^\gamma]<\infty$ for $\gamma \in [1,\kappa)$; a minor modification works when $\gamma \in (0,1 \wedge \kappa)$ also. 

To prove that $\E[\tau_1^\gamma] < \infty$ when $\gamma < \kappa$, by decomposing according the the location of the walk at the first regeneration time, we obtain that for any $\varepsilon>0$,
\begin{align*}
 \E[\tau_1^\gamma]
= \sum_{n=1}^\infty \E\left[ (T_n)^\gamma \ind{X_{\tau_1} = n} \right] 
&\leq \sum_{n=1}^\infty \E\left[ (T_n)^{\gamma(1+\e)} \right]^{\frac{1}{1+\e}} \P(X_{\tau_1} = n)^{\frac{\e}{1+\e}} \\
&\leq \E\left[T_1^{\gamma(1+\e)} \right]^{\frac{1}{1+\e}}  \sum_{n=1}^\infty n^{\gamma \vee \left( \frac{1}{1+\e} \right) } \P(X_{\tau_1} = n)^{\frac{\e}{1+\e}},
\end{align*}
%where the first inequality follows from H\"older's inequality and the second inequality 
where the last inequality 
follows either from Minkowski's inequality when $\gamma(1+\e)\geq 1$ or from the subadditivity of $x\mapsto x^{\gamma(1+\e)}$ when $\gamma(1+\e)<1$. 
%Because $\gamma(1+\e)\geq 1$, Jensen's inequality implies that  
%\[
% \E[(T_n)^{\gamma(1+\e)}] \leq \E\left[ n^{\gamma(1+\e)-1} \sum_{k=1}^n (T_k-T_{k-1})^{\gamma(1+\e)} \right] = n^{\gamma(1+\e)} \E\left[(T_1)^{\gamma(1+\e)} \right],
%\]
%and thus 
%\[
%  \E[\tau_1^\gamma] \leq \E\left[T_1^{\gamma(1+\e)} \right]^{\frac{1}{1+\e}} \sum_{n=1}^\infty n^\gamma \P(X_{\tau_1} = n)^{\frac{\e}{1+\e}}. 
%\]
Since \cite[Prop.\ 2.6]{sConditionT} implies\footnote{In general, the results in \cite{sConditionT} assume that the RWRE is ``uniformly elliptic,'' i.e., that all transition probabilities are uniformly bounded away from zero. However, an examination of the proof of Proposition 2.6 in that paper shows that the uniform ellipticity assumption is not needed there.} that $\P(X_{\tau_1} = n) \leq e^{-c n}$ for some $c>0$, it follows from Lemma \ref{lem:ETmb} that if $\gamma<\kappa$ the right side is finite for $\e>0$ sufficiently small.

To prove that $\bE[\tau_1^\gamma] = \infty$ when $\gamma \geq \k$, note that if $\gamma \geq 1$ then $\P$-almost surely,
\begin{align*}
 \bE[\tau_1^\gamma] &= \lim_{n\to\infty} \frac{1}{n} \sum_{k=1}^n (\tau_k-\tau_{k-1})^\gamma \\
&= \lim_{n\to\infty} \frac{1}{n} \sum_{k=1}^n \left( \sum_{x=X_{\tau_{k-1}}+1}^{X_{\tau_k}} (T_x-T_{x-1}) \right)^\gamma \\
&\geq \lim_{n\to\infty} \frac{1}{n} \sum_{k=1}^n \sum_{x=X_{\tau_{k-1}}+1}^{X_{\tau_k}} (T_x-T_{x-1})^\gamma
= \lim_{n\to\infty} \frac{1}{n} \sum_{x=1}^{X_{\tau_n}} (T_x-T_{x-1})^\gamma
%\\
%&= \lim_{n\to\infty} \frac{X_{\tau_n}}{n} \frac{1}{X_{\tau_n}} \sum_{x=1}^{X_{\tau_n}} (T_x-T_{x-1})^\gamma 
= \bE[X_{\tau_1}] \E[T_1^\gamma]. 
\end{align*}
where in the last equality we used that the sequence $\{T_x-T_{x-1}\}_{x\geq 1}$ is ergodic under the annealed measure \cite{sRWRE}. 
Therefore, if $\gamma \geq \k \geq 1$ it follows from Lemma \ref{lem:ETmb} that $\bE[\tau_1^\gamma] = \infty$. 
 \end{proof}

%As noted in \eqref{1daCLT} above, an annealed CLT also holds for the hitting times $T_n$. 
%Theorem \ref{th:BEregCLT} can also be used to prove rates of convergence for this CLT as well. 
%\begin{cor}\label{cor:1dratesT}
% Assume that $E[\log \rho] < 0$ and  $E[\rho^\k] = 1$ for some $\k>2$. 
% \begin{enumerate}
%  \item If $\k > 3$, then there exists a constant $C<\infty$ such that 
%  \item If $\k \in (2,3]$, then for any $\e>0$, 
% \end{enumerate}
%\end{cor}
\begin{proof}[Proof of Corollary~\ref{cor:1dratesX}]
Applying Proposition \ref{1dregmb} to Theorem \ref{th:RWRErates} for any $\d < (\k-2)\wedge 1$, we immediately obtain (1a) and (2a).

The proofs of (2a) and (2b) also follow from Theorem \ref{th:BEregCLT}, but applied to a different regenerative process. 
Represent $T_n = \sum_{k=1}^n \zeta_i$ where $\zeta_i = T_i-T_{i-1}$. Then under the annealed measure $\P$ the sequence 
%$\{\zeta_i\}_{i\geq 1}$
$(T_n)_{n\ge 1}$
 is a regenerative process with ``regeneration times'' $0=\s_0 < \s_1 < \s_2 <\cdots$ where $\s_k = X_{\tau_k}$ is the position of the walk at the time of the $k$-th regeneration time of the walk. 
Since the crossing times $\zeta_i \geq 1$, to apply Theorem \ref{th:BEregCLT} we need only to check that $\E[ \left( \sum_{i=1}^{\s_1} \zeta_i \right)^\d ] < \infty$ and $\E[ \left( \sum_{i=\s_1+1}^{\s_2} \zeta_i \right)^{2+\d} ] < \infty$ for some $\d \in (0,1]$. 
However, since 
\[
 \sum_{i=\s_{k-1}+1}^{\s_k} \zeta_i = T_{\s_k} - T_{\s_{k-1}} = T_{X_{\tau_k}} - T_{X_{\tau_{k-1}}} = \tau_k-\tau_{k-1},
\]
%$\sum_{i=\s_{k-1}+1}^{\s_k} \zeta_i = T_{\s_k} - T_{\s_{k-1}} = T_{X_{\tau_k}} - T_{X_{\tau_{k-1}}} = \tau_k-\tau_{k-1}$,
this is equivalent to checking that $\E[\tau_1^\d]$ and $\E[(\tau_2 - \tau_1)^{2+\d}] < \infty$, and by Proposition \ref{1dregmb} this holds for $\d=1$ if $\k>3$ and for any $\d \in (0,2-\k)$ if $\k \in (2,3]$. 
\end{proof}

\section{A non-uniform semi-local Berry-Esseen bound}\label{sec:SLBE}

Consider a random variable $\vvar=(\avar,\bvar)\in\R^2$ with zero-mean $E[\vvar]=\bf 0$ and a positive-definite covariance matrix 
\[
\Sigma=
\begin{pmatrix}
\Var(\avar)&\Cov(\avar,\bvar)\\
\Cov(\avar,\bvar)&\Var(\bvar)
\end{pmatrix}
=
\begin{pmatrix}
\sigma_1^2&\sigma_{12}\\
\sigma_{12}&\sigma_2^2
\end{pmatrix}>0.
\]
(That is, both eigenvalues of $\Sigma$ are strictly positive.) Let $\vvar_i=(\avar_i,\bvar_i)$, $i\in\N$, denote iid copies of $\vvar$ and
\[
\vsum_n=(\asum_n,\bsum_n):= \left(\sum_{i=1}^n \avar_i, \sum_{i=1}^n \bvar_i \right).
\]
Throughout this section, we assume that almost surely, $\bvar\in\rho+\Z$ for some $\rho\in\R$ and that $\bvar$ has a lattice distribution with span $1$.

By the central limit theorem,  if $E[|\vvar|^2]<\infty$, then
$\vsum_n/\sqrt n$ converges weakly to a two-dimensional normal random variable $\mathcal N=(\mathcal N_1,\mathcal N_2)$ with covariance matrix $\Sigma$. Here $|\vvar|:=\sqrt{\avar^2+\bvar^2}$. Moreover, when $E[|\bvar|^3]<\infty$, the classical local limit theorem (LLT) states that  the probability mass function of $\bsum_n/\sqrt n$ converges to the density of $\mathcal N_2$. See \cite[VII]{petrovRW}. Under weaker moment condition $E[|\bvar|^{2+\delta}]<\infty$ for some $\delta\in(0,1]$,  the following non-uniform estimate of the convergence rate holds for the LLT. See \cite{shev17} and \cite{bcg11}.

\begin{prop}\label{thm:LLT}
Assume that 
%$(\bvar_i)_{i\in\N}$ are iid integer-valued random variables with $\sigma^2:=\Var(\bvar_1)>0$ and 
$E[|\bvar|^{2+\delta}]<\infty$ for $\delta\in(0,1]$. Writing $y_n:=(y+n\rho)/\sqrt n$ for $y\in\Z$.  
%If $\bsum_n:=\bvar_1+\ldots+\bvar_n$ is aperiodic, 
Then
\[
\sup_{y\in\Z}(1+y_n^2)\Abs{P \left(\tfrac{\bsum_n}{\sqrt n}=y_n \right)-\tfrac{1}{\sigma_2 \sqrt{2n\pi}  }e^{-y_n^2/2\sigma_2^2}}
\le Cn^{-\tfrac{1+\delta}{2}},
\]
where the constant $C$ depends only on $\delta$ and $E[|\bvar|^{2+\delta}]$.
\end{prop}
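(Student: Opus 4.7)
The plan is to prove this via Fourier inversion, converting the non-uniform weight $(1+y_n^2)$ into a second-order differential operator acting on the characteristic-function integrand through integration by parts. First I would set $\psi(t):=E[e^{it\bvar}]$ and exploit that $\bvar - \rho \in \Z$ with span $1$. Fourier inversion for lattice random variables and for the normal density give respectively
\[
 P\bigl(\bsum_n = y + n\rho\bigr) = \frac{1}{2\pi}\int_{-\pi}^{\pi} e^{-it(y+n\rho)}\psi(t)^n\,dt, \qquad \frac{1}{\sigma_2\sqrt{2\pi n}}\,e^{-y_n^2/(2\sigma_2^2)} = \frac{1}{2\pi}\int_{\R} e^{-it(y+n\rho)}e^{-n\sigma_2^2 t^2/2}\,dt.
\]
Writing $\Delta_n(y)$ for the difference inside the absolute value in the statement and substituting $t=s/\sqrt n$ in both integrals yields
\[
 \sqrt n\,\Delta_n(y) = \frac{1}{2\pi}\int_{|s|\le\pi\sqrt n} e^{-isy_n} F_n(s)\,ds \;-\; \frac{1}{2\pi}\int_{|s|>\pi\sqrt n}e^{-isy_n}e^{-\sigma_2^2 s^2/2}\,ds,
\]
where $F_n(s) := \psi(s/\sqrt n)^n - e^{-\sigma_2^2 s^2/2}$. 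The tail integral is $O(e^{-cn})$ and can be absorbed.

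Next I would use the identity $(1+y_n^2)e^{-isy_n} = (1-\partial_s^2)e^{-isy_n}$ and integrate by parts twice. The boundary contributions at $s = \pm\pi\sqrt n$ are $O((1-c)^n)$ since the span-$1$ lattice condition forces $|\psi(\pm\pi)| < 1$, so the proposition reduces to showing
\[
 \int_{-\pi\sqrt n}^{\pi\sqrt n} \bigl|(1-\partial_s^2) F_n(s)\bigr|\,ds \;\le\; C\,n^{-\delta/2}.
\]
I would split $[-\pi\sqrt n,\pi\sqrt n]$ into an inner region $\{|s|\le A\}$ and its complement. The assumption $E[|\bvar|^{2+\delta}]<\infty$ yields the H\"older-Taylor expansions, valid for $|t|$ small,
\[
 \bigl|\psi(t) - 1 + \tfrac{\sigma_2^2 t^2}{2}\bigr| \le C|t|^{2+\delta}, \qquad |\psi'(t) + \sigma_2^2 t| \le C|t|^{1+\delta}, \qquad |\psi''(t) + \sigma_2^2| \le C|t|^{\delta}.
\]
On the inner region, the leading Gaussian pieces of $\psi(s/\sqrt n)^n$ and of its first and second $s$-derivatives cancel \emph{exactly} against those of $e^{-\sigma_2^2 s^2/2}$ inside $(1-\partial_s^2)F_n$; the remainders are controlled by the displayed bounds and produce a pointwise estimate of the form $C(1+|s|^{2+\delta})n^{-\delta/2}e^{-cs^2}$, whose integral is $O(n^{-\delta/2})$. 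On the outer region, the classical estimates $|\psi(t)| \le 1-ct^2$ for $|t|\le\epsilon$ and $|\psi(t)| \le 1 - c_0$ for $\epsilon\le|t|\le\pi$ give $|\psi(s/\sqrt n)^n|\le e^{-cs^2}+(1-c_0)^n$, with only polynomial losses from the two derivatives, so the total contribution is $O(e^{-cA^2}) + O(e^{-c'n})$ --- more than enough.

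The main obstacle is controlling $\partial_s^2 \psi(s/\sqrt n)^n$. Each derivative that falls on $\psi$ contributes a factor of $1/\sqrt n$, but the chain rule $\partial_s \psi(s/\sqrt n)^n = \sqrt n\,\psi(s/\sqrt n)^{n-1}\psi'(s/\sqrt n)$ consumes these factors, so any crude bound on $\partial_s^2 F_n$ is only $O(1)$; the gain of $n^{-\delta/2}$ must come \emph{entirely} from the matched cancellation against $\partial_s^2 e^{-\sigma_2^2 s^2/2}$. Making this cancellation quantitative while using only the H\"older remainders in the Taylor expansion of $\psi$ and its derivatives is the technical heart of the proof and explains why the conclusion requires nothing beyond a $(2+\delta)$-moment. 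A detailed execution of essentially this argument can be found in \cite{shev17} and \cite{bcg11}, from which the proposition is quoted.
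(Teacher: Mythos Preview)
Your approach is essentially identical to the paper's: both express the difference via Fourier inversion, convert the weight $(1+y_n^2)$ into the operator $1-\partial_s^2$ through integration by parts, and then bound $\int_{|s|\le\pi\sqrt n}|(1-\partial_s^2)F_n(s)|\,ds$ by $Cn^{-\delta/2}$ using the H\"older--Taylor expansions of $\psi$, $\psi'$, $\psi''$ that the $(2+\delta)$-moment affords. The only cosmetic difference is that the paper packages the needed pointwise bounds $|F_n|\le Cn^{-\delta/2}e^{-cs^2}$ and $|\partial_s^2 F_n|\le Cn^{-\delta/2}e^{-cs^2}$ into a separate proposition (its Proposition~\ref{prop:chf-for-LLT}(a)(b)) and appeals to it directly, whereas you sketch these estimates inline; also, the paper's integration by parts has exactly zero boundary terms by $2\pi\sqrt n$-periodicity of $s\mapsto e^{is(\bsum_n-y-n\rho)/\sqrt n}$, while you bound them as $O((1-c)^n)$ --- both are fine.
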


For any positive definite $2\times 2$ matrix, let 
$\gamma_A(\mathbf{x}) = C_A \exp\{ -\mathbf{x}^T A^{-1}\mathbf{x}/2 \}$, $\mathbf{x} \in \R^2$ be the density function of a centered Gaussian with covariance matrix $A$ and let 
\begin{equation}\label{psidef}
 \psi_A(x,y) = \int_{-\infty}^x \gamma_A(t,y) \, dt. 
\end{equation}
%Denote the density function of $\mathcal N$ by $\gamma_\Sigma(\mathbf{t})=C\exp(-\mathbf{t}^T\Sigma^{-1}\mathbf{t}/2),\, \mathbf{t}\in\R^2$, and  set
%$\psi_\Sigma(x,y):=P(\mathcal N_1\le x, \mathcal N_2\in\dd y)=\int_{-\infty}^x \gamma_\Sigma(u,y)\dd u$. 
The purpose of this section is to generalize Proposition~\ref{thm:LLT} to 
a non-uniform estimate of a {\it semi-local} limit theorem, which is of interest in its own right.
\begin{thm}\label{thm:semi-local-LLT}
%Assume that $\bsum_n$ is an aperiodic random walk.
Assume that $E[|\vvar|^{2+\delta}]<\infty$ for $\delta\in(0,1]$, then 
\[
\sup_{x\in\mathbb R, y\in\mathbb Z}(1+y_{n}^2)
\bigg|P\left( \tfrac{\asum_n}{\sqrt n}\le x, \tfrac{\bsum_n}{\sqrt n}=y_n \right)-\frac{1}{\sqrt n}\psi_\Sigma(x, y_{n})\bigg|\le Cn^{-(1+\delta)/2}.
\]
\end{thm}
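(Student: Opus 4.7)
The plan is to prove Theorem~\ref{thm:semi-local-LLT} by combining Fourier inversion in the lattice variable with an Esseen-type smoothing argument in the continuous variable $\asum_n$. Since $\bvar$ has a span-$1$ lattice distribution with shift $\rho$, Fourier inversion yields the exact identity
\begin{equation*}
 P\!\left(\tfrac{\asum_n}{\sqrt n}\le x,\, \tfrac{\bsum_n}{\sqrt n}=y_n\right)
 = \frac{1}{2\pi\sqrt n}\int_{-\pi\sqrt n}^{\pi\sqrt n}\tilde F_n(x,t)\,e^{-ity_n}\,dt,
\end{equation*}
where $\tilde F_n(x,t):=E\bigl[\indd{\asum_n/\sqrt n\le x}\,e^{it\bsum_n/\sqrt n}\bigr]$, and an analogous formula with $\tilde F_n$ replaced by $\Psi(x,t):=E[\indd{\mathcal N_1\le x}\,e^{it\mathcal N_2}]$ (now integrated over all of $\R$) expresses $\tfrac{1}{\sqrt n}\psi_\Sigma(x,y_n)$. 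The theorem thus reduces to controlling the central integral $\int_{|t|\le \pi\sqrt n}[\tilde F_n(x,t)-\Psi(x,t)]\,e^{-ity_n}\,dt$ together with the Gaussian tail $\int_{|t|>\pi\sqrt n}\Psi(x,t)\,e^{-ity_n}\,dt$, the latter being negligible since $|\Psi(x,t)|$ decays like $\exp(-\tfrac12(\s_2^2-\s_{12}^2/\s_1^2)t^2)$ uniformly in $x$.

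For the central integral I would split at $|t|=\eta\sqrt n$ for a small $\eta>0$. On the annulus $\eta\sqrt n < |t| \le \pi\sqrt n$ the span-$1$ lattice property of $\bvar$ ensures $|E[e^{i(t/\sqrt n)\bvar}]|\le 1-c$ uniformly in $t$, so $|\tilde F_n(x,t)|\le (1-c)^n$ is exponentially small. On the central disk $|t|\le \eta\sqrt n$ I would invoke Esseen's smoothing inequality in the $x$-variable to reduce the uniform-in-$x$ estimate of $\tilde F_n(x,t)-\Psi(x,t)$ to a bound on the joint characteristic function $\varphi_\vvar(s/\sqrt n,t/\sqrt n)^n - \exp(-\tfrac12(s,t)\Sigma(s,t)^T)$, which follows from a Taylor expansion of $\log\varphi_\vvar$ about the origin together with a truncation of $\vvar$; the $(2+\delta)$-moment hypothesis yields a remainder of size $|(s,t)|^{2+\delta}/n^{\delta/2}$ times a Gaussian factor.

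The non-uniform factor $1+y_n^2$ is produced via the identity $y_n^2 e^{-ity_n}=-\partial_t^2 e^{-ity_n}$ and two integrations by parts in $t$, which replace the integrand by $\partial_t^2[\tilde F_n(x,t)-\Psi(x,t)]$ at the cost of boundary evaluations at $t=\pm\pi\sqrt n$ (controlled by the same exponential-decay bound as on the annulus). Each $t$-derivative brings down a factor of $i\bsum_n/\sqrt n$ inside the expectation defining $\tilde F_n$, so the object to estimate becomes a signed-measure analogue of $\tilde F_n-\Psi$ weighted by $(\bsum_n/\sqrt n)^2$, to which the same Esseen-plus-Taylor scheme applies once one has uniform control on the relevant moments of $\bsum_n/\sqrt n$; these follow from $E[|\bvar|^{2+\delta}]<\infty$.

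The main obstacle, and the reason this step requires significant adaptation of \cite{bo80}, lies in the resulting estimate of $\int_{|t|\le \eta\sqrt n}|\partial_t^2[\tilde F_n(x,t)-\Psi(x,t)]|\,dt$ uniformly in $x$. Under the third-moment assumption in \cite{bo80} the Taylor remainder is of size $|(s,t)|^3/\sqrt n$ and integrates painlessly, but with only $(2+\delta)$-moments the remainder is of size $|(s,t)|^{2+\delta}/n^{\delta/2}$, and the extra $|t|^2$-factor introduced by the second derivative must be absorbed by the Gaussian decay $|\varphi_\vvar(s/\sqrt n,t/\sqrt n)^n|\le \exp(-c(s^2+t^2))$ that holds on the central disk. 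Arranging this absorption, through a careful splitting of $\vvar_i$ into bounded and tail parts tailored to the $(2+\delta)$-moment assumption, is, I expect, the technical heart of Section~\ref{sec:SLBE}.
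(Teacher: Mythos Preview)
Your overall architecture---Fourier inversion in the lattice variable, Esseen smoothing in the continuous variable, and two integrations by parts in $t$ to produce the weight $(1+y_n^2)$---is the same as the paper's. Two specific steps, however, do not work as you describe them.

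First, the assertion that $|\tilde F_n(x,t)|\le(1-c)^n$ on the annulus $\eta\sqrt n<|t|\le\pi\sqrt n$ is false: the indicator $\indd{\asum_n/\sqrt n\le x}$ destroys the product structure, so $\tilde F_n(x,t)$ is \emph{not} $\varphi(0,t/\sqrt n)^n$ and need not decay at all. The paper avoids this by reversing the order of your two reductions: it first conditions on $\{\bsum_n/\sqrt n=y_n\}$ and normalizes (using Proposition~\ref{thm:LLT} to control the normalizing constant), obtaining a genuine distribution function $\bar F_n(x)=F_n(x,y_n)/F_n(\infty,y_n)$, and only then applies Esseen smoothing to $\bar F_n$. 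The characteristic function of the smoothed $\bar F_n$ is expressed via \eqref{eq:e20} as a $t_2$-integral of $\lambda_n(\mathbf t)=\varphi(\mathbf t/\sqrt n)^n$, which \emph{does} have the product structure and hence the exponential decay \eqref{eq:e17} on the annulus. Your ordering (annulus bound before smoothing) cannot recover this.

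Second, and more substantively, your plan to control the second-derivative term by ``splitting $\vvar_i$ into bounded and tail parts'' and absorbing $|t|^2$ into the Gaussian is not the mechanism the paper uses, and a pointwise bound on the integrand would not suffice. After differentiating twice, one of the resulting pieces---called $R(\mathbf t)$ in \eqref{eq:e19}---contains the factor $E[\bvar^2(e^{i\mathbf t\cdot\vvar/\sqrt n}-e^{it_2\bvar/\sqrt n})]$, which is only $O((|t_1|/\sqrt n)^\delta)$ and is \emph{not} integrable against $G_{n,J}(t_1)\sim 1/|t_1|$. The paper's solution has two ingredients you did not anticipate: (i) Proposition~\ref{prop:chf-for-LLT}(c), a Lipschitz-type estimate in $t_1$ for $\Lambda(t_1,t_2)-\Lambda(0,t_2)$ up to the explicit remainder $R$, which replaces the unavailable third-derivative bound; and (ii) for $R$ itself, rather than bounding pointwise, one exploits the Fourier-inversion structure of $G_{n,J}$ to rewrite the $t_1$-integral as a difference of values of the probability measure $\mu_J=v_J*\mathcal Z_{2c_0}$ on intervals of length $|\avar|/\sqrt n$, yielding the bound $C|\avar/\sqrt n|^\delta$ directly (see \eqref{eq:e12} and the lines following it). No truncation of the summands appears anywhere in the argument.
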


For the case $\delta=1$, Theorem~\ref{thm:semi-local-LLT} was previously obtained by Bolthausen\cite[Theorem~4]{bo80}. 
Our proof follows the main idea of \cite{bo80}, where characteristic functions (ch.f.) are used to express the probabilities. In fact, the term $y_n^2$ comes from second-order derivatives of ch.f.'s.  However, unlike \cite{bo80}, estimates about the third order derivative of ch.f.'s (which were used to bound the difference of the second-order derivatives) are not available because of the lack of moments when $\delta\in(0,1)$. To overcome this difficulty, we will use a Lipschitz-type estimate of the second order derivative of the ch.f.'s. See Proposition~\ref{prop:chf-for-LLT}(c).

In Subsection~\ref{subsec:exp-chf}, we obtain useful estimates of characteristic functions, which will yield an easy proof of Proposition~\ref{thm:LLT} in Subsection~\ref{subsec:pf-llt}. Further, making use of these results, we will prove Theorem~\ref{thm:semi-local-LLT} in Subsection~\ref{subsec:pf-semi-llt}.

\subsection{Estimates of characteristic functions}\label{subsec:exp-chf}
Let $\mathbf{t}=(t_1,t_2)\in\R^2$. We denote the characteristic functions of $\vvar$, $\vsum_n/\sqrt n$ and $\mathcal N$ by $\vphi(\mathbf{t})$, $\lambda_n(\mathbf{t})=\vphi(\mathbf{t}/\sqrt n)^n$ and $\lambda_0(\mathbf{t})=\exp(-\mathbf{t}^T\Sigma \mathbf{t}/2)$, respectively.
\begin{prop}\label{prop:chf-for-LLT}
Assume  $E[|\vvar|^{2+\delta}]<\infty$ for $\delta\in(0,1]$. Then there exist positive constants $\varepsilon, c, C$ depending on $\delta, \Sigma$ and $E[|\vvar|^{2+\delta}]$ such that for any $\mathbf{t}\in\R^2$ with $|t_1|\le \varepsilon\sqrt n$, $|t_2|\le \pi\sqrt n$,
\begin{itemize}
\item[(a)]
$\Abs{\varphi(\tfrac{\mathbf{t}}{\sqrt n})^{n-j}-\lambda_0(\mathbf{t})}
\le 
Cn^{-\delta/2}e^{-c|\mathbf{t}|^2}$,  $\forall j=0,1,2;$
\item[(b)] $\Abs{\frac{\partial^2}{\partial t_2^2}(\lambda_n(\mathbf{t})-\lambda_0(\mathbf{t}))}
\le 
Cn^{-\delta/2}e^{-c|\mathbf{t}|^2}$;
\item[(c)]
Set $\Lambda(\mathbf{t})=\Lambda_n(\mathbf{t}):=\frac{\partial^2}{\partial t_2^2}(\lambda_n(\mathbf{t})-\lambda_0(\mathbf{t}))$. Then there exists a constant $c_0>0$ such that
\[
\Abs{\Lambda(t_1,t_2)-\Lambda(0,t_2)+\vphi(0,\tfrac{t_2}{\sqrt n})^{n-1}E[\bvar^2(e^{i\mathbf{t}\cdot \vvar/\sqrt n}-e^{it_2\bvar/\sqrt n})]}
\le Cn^{-\delta/2}|t_1|(1+|\mathbf{t}|)^4e^{-c_0t_2^2}.
\]
\end{itemize}
\end{prop}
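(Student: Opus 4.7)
The plan is to reduce everything to Taylor expansions of $\varphi$ around the origin combined with the lattice hypothesis on $\bvar$, after splitting the admissible rectangle $\{|t_1|\le\varepsilon\sqrt n,\,|t_2|\le\pi\sqrt n\}$ into a \emph{central} regime where $|\mathbf{s}|:=|\mathbf{t}|/\sqrt n$ is small and a \emph{peripheral} regime where $|\mathbf{s}|$ is bounded away from $0$. In the peripheral regime the span-$1$ lattice assumption together with continuity of $\varphi$ yields $|\varphi(\mathbf{s})|\le\rho<1$ uniformly, so every occurrence of $\varphi(\mathbf{s})^{n-j}$ is super-polynomially small in $n$ and all three bounds follow at once; all serious work is in the central regime.

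For part (a), I would use $\varphi(\mathbf{s})=1-\tfrac12\mathbf{s}^T\Sigma\mathbf{s}+R(\mathbf{s})$ with $|R(\mathbf{s})|\le C|\mathbf{s}|^{2+\delta}$ (valid since $E[|\vvar|^{2+\delta}]<\infty$) and the principal branch of the complex logarithm to obtain
\[
(n-j)\log\varphi(\mathbf{t}/\sqrt n)=-\tfrac12\mathbf{t}^T\Sigma\mathbf{t}+O\bigl(n^{-\delta/2}(1+|\mathbf{t}|)^{2+\delta}\bigr),
\]
from which the bound follows by $|e^x-e^y|\le|x-y|e^{\max(\Re x,\Re y)}$ together with the quadratic upper bound $\Re\log\varphi(\mathbf{s})\le-c|\mathbf{s}|^2$, after absorbing the polynomial prefactor into the Gaussian. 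For part (b), I differentiate $\lambda_n=\varphi(\cdot/\sqrt n)^n$ twice,
\[
\partial^2_{t_2}\lambda_n(\mathbf{t})=(n-1)\varphi(\mathbf{s})^{n-2}\varphi_2(\mathbf{s})^2+\varphi(\mathbf{s})^{n-1}\varphi_{22}(\mathbf{s}),
\]
combine this with the Taylor expansions $\varphi_2(\mathbf{s})=-(\sigma_{12}s_1+\sigma_2^2 s_2)+O(|\mathbf{s}|^{1+\delta})$ and $\varphi_{22}(\mathbf{s})=-\sigma_2^2+O(|\mathbf{s}|^{\delta})$ (both using the universal bound $|e^{ix}-1|\le 2^{1-\delta}|x|^{\delta}$), and apply (a) to replace $\varphi^{n-j}$ by $\lambda_0$. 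The resulting expression matches $\partial^2_{t_2}\lambda_0(\mathbf{t})=\lambda_0(\mathbf{t})[(\sigma_{12}t_1+\sigma_2^2 t_2)^2-\sigma_2^2]$ to leading order, with remainder of order $n^{-\delta/2}(1+|\mathbf{t}|)^k e^{-c|\mathbf{t}|^2}$, absorbed into $e^{-c'|\mathbf{t}|^2}$ after adjusting the constant.

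For part (c) the essential device is the algebraic identity
\[
\varphi(\mathbf{s})^{n-1}\varphi_{22}(\mathbf{s})-\varphi(0,s_2)^{n-1}\varphi_{22}(0,s_2)=[\varphi(\mathbf{s})^{n-1}-\varphi(0,s_2)^{n-1}]\varphi_{22}(\mathbf{s})-\varphi(0,s_2)^{n-1}E[\bvar^2(e^{i\mathbf{s}\cdot\vvar}-e^{is_2\bvar})],
\]
which follows from $\varphi_{22}(\mathbf{s})-\varphi_{22}(0,s_2)=-E[\bvar^2(e^{i\mathbf{s}\cdot\vvar}-e^{is_2\bvar})]$. Moving the last term to the left-hand side of the target inequality (it coincides with the correction term in the statement), the quantity to bound becomes
\[
\underbrace{(n-1)[\varphi(\mathbf{s})^{n-2}\varphi_2(\mathbf{s})^2-\varphi(0,s_2)^{n-2}\varphi_2(0,s_2)^2]}_{A}+\underbrace{[\varphi(\mathbf{s})^{n-1}-\varphi(0,s_2)^{n-1}]\varphi_{22}(\mathbf{s})}_{B}-\underbrace{\partial^2_{t_2}[\lambda_0(\mathbf{t})-\lambda_0(0,t_2)]}_{C}.
\]
Every factor in $A$ and $B$ involves only $\varphi$, $\varphi_1$, $\varphi_2$, and $\varphi_{12}$, each of which admits an $s_1$-derivative under $E[|\vvar|^{2+\delta}]<\infty$, so the mean value theorem applied to $\varphi(\mathbf{s})^{n-j}-\varphi(0,s_2)^{n-j}=(n-j)\int_0^{s_1}\varphi(u,s_2)^{n-j-1}\varphi_1(u,s_2)\,du$ (and the analogous formula for $\varphi_2^2$) naturally produces the factor $|s_1|=|t_1|/\sqrt n$ from each difference. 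After expanding $A$ and $B$ with the help of (a) and (b), the leading Gaussian parts of $A+B$ combine with $C$ via $\partial^2_{t_2}\lambda_0=\lambda_0[(\sigma_{12}t_1+\sigma_2^2 t_2)^2-\sigma_2^2]$ and cancel identically. The residual errors then carry both the $|t_1|$ factor from MVT and a factor $n^{-\delta/2}$ arising from one of three sources---the replacement $\varphi^{n-j}\to\lambda_0$ in (a), the ratio $(n-1)/n=1-1/n$, or the $O(|\mathbf{s}|^{1+\delta})$ and $O(|\mathbf{s}|^{\delta})$ Taylor remainders of $\varphi_2$ and $\varphi_{22}$---with polynomial prefactors absorbed into $(1+|\mathbf{t}|)^4$ after weakening $e^{-c|\mathbf{t}|^2}$ to $e^{-c_0 t_2^2}$.

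The main obstacle is the algebraic identity above, which is the crucial new ingredient needed when $\delta<1$: it isolates into the explicit correction term $\varphi(0,s_2)^{n-1}E[\bvar^2(e^{i\mathbf{s}\cdot\vvar}-e^{is_2\bvar})]$ the one part of the calculation that would otherwise require $E[|\avar\bvar^2|]<\infty$ (a moment needed to differentiate $\varphi_{22}$ in $s_1$), which is not available under $E[|\vvar|^{2+\delta}]$ alone. All remaining pieces admit a genuine $\partial_{s_1}$-MVT, reproducing the effect of Bolthausen's third-derivative argument at $\delta=1$ by a differentiable-plus-difference splitting. The rest is bookkeeping: verifying the leading-order cancellation $A+B\approx C$ and tracking polynomial prefactors into the final Gaussian decay, which is tedious but routine.
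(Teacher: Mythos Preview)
Your approach is essentially the paper's, and correctly identifies the crucial new ingredient for $\delta<1$: isolating the correction term $\varphi(0,t_2/\sqrt n)^{n-1}E[\bvar^2(e^{i\mathbf{t}\cdot\vvar/\sqrt n}-e^{it_2\bvar/\sqrt n})]$ so that no remaining piece requires differentiating $\varphi_{22}$ in $s_1$. Parts (a) and (b) follow the paper line-for-line.

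For part (c) there is a small but real organizational difference worth noting. You propose applying the mean value theorem directly to $\varphi(\mathbf{s})^{n-j}-\varphi(0,s_2)^{n-j}$ and to $\varphi_2^2-\tilde\varphi_2^2$, extracting the factor $|t_1|$, and then arguing that leading Gaussian parts of $A+B$ cancel with $C$ while the residuals pick up $n^{-\delta/2}$ from one of your three sources. This works, but the bookkeeping is delicate because the raw MVT on $\varphi^{n-j}-\tilde\varphi^{n-j}$ only yields $|t_1|$, not $n^{-\delta/2}|t_1|$; the extra $n^{-\delta/2}$ must then be scavenged from cancellations against $C$ term by term. The paper instead introduces $H_j=\varphi(\bar{\mathbf{t}})^{n-j}-\lambda_0(\mathbf{t})$ and writes $\Lambda$ in terms of $H_j$ and $K_i=E[\bvar^i e^{i\bar{\mathbf{t}}\cdot\vvar}]$, then applies the mean value theorem to $\partial_{t_1}H_j$ rather than to $\partial_{t_1}\varphi^{n-j}$. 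Because $\partial_{t_1}H_j$ already has the leading terms of $\partial_{t_1}\varphi^{n-j}$ and $\partial_{t_1}\lambda_0$ cancelled, it is $O(n^{-\delta/2}e^{-c|\mathbf{t}|^2})$ directly, so $\Delta_j:=H_j-\tilde H_j$ carries both the $|t_1|$ and the $n^{-\delta/2}$ factors in one stroke. The resulting four-term split $I_1+I_2+I_3+I_4$ is then mechanically bounded with no further cancellations to track. Your route reaches the same destination; the paper's packaging is somewhat cleaner. (Minor slip: your claim that ``every factor in $A$ and $B$ involves only $\varphi,\varphi_1,\varphi_2,\varphi_{12}$'' is literally false since $B$ contains $\varphi_{22}$ as a multiplicative factor---but your intended point, that the $t_1$-difference in $B$ comes from $\varphi^{n-1}-\tilde\varphi^{n-1}$ and hence differentiates without touching $\varphi_{22}$, is correct.)
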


Before giving the proof, let's recall some basic inequalities.
For any $x\in\R$ and any $\delta\in[0,1]$
\begin{equation}\label{eq:exp-diff}
|e^{ix}-1|=2|\sin\tfrac{x}{2}|\le 2|x/2|^\delta,
\end{equation}
and so
\begin{equation}\label{eq:exp-diff2}
\Abs{e^{ix}-ix-1}
=\Abs{ix\int_0^1(e^{isx}-1)\dd s}
\le 
C_\delta|x|^{1+\delta},
\end{equation}
\begin{equation}\label{eq:exp-diff3}
\Abs{e^{ix}-(1+ix-x^2/2)}
=
\Abs{x^2\int_0^1(s-1)(e^{isx}-1)\dd s}
\le C_\delta |x|^{2+\delta}.
\end{equation}
\begin{proof}
{\bf (a)}
We first consider the case $|\mathbf{t}|\le 2\varepsilon\sqrt n$ for small enough $\e>0$ to be determined later.
By \eqref{eq:exp-diff2} and \eqref{eq:exp-diff3},
\begin{equation}\label{eq:exp-ch1}
\Abs{\vphi(\tfrac{\mathbf{t}}{\sqrt n})-1}
\le 
C_\delta |\mathbf{t}/\sqrt n|^{1+\delta}E[|\vvar|^{1+\delta}],
\end{equation}
\begin{equation}\label{eq:exp-chf2}
\Abs{\vphi(\tfrac{\mathbf{t}}{\sqrt n})-(1-\mathbf{t}^T\Sigma \mathbf{t}/2n)}
\le 
C_\delta |\mathbf{t}/\sqrt n|^{2+\delta} E[|\vvar|^{2+\delta}].
\end{equation}
We take $\varepsilon>0$ to be small enough  such that $|\vphi(\mathbf{t}/\sqrt n)-1|<0.5$  when $|\mathbf{t}|\le2\varepsilon\sqrt n$. In this case $\log \vphi(\mathbf{t}/\sqrt n)$ is well-defined for $|\mathbf{t}|/\sqrt n\le 2\varepsilon$, and 
\begin{align*}
\Abs{\log\vphi(\tfrac{\mathbf{t}}{\sqrt n})+\mathbf{t}^T\Sigma \mathbf{t}/2n}
&=
\Abs{
\log\left(1-[1-\vphi(\tfrac{\mathbf{t}}{\sqrt n})]\right)+\mathbf{t}^T\Sigma \mathbf{t}/2n
}\\
&=
\Abs{
\sum_{k=2}^\infty
\frac{(\vphi(\tfrac{\mathbf{t}}{\sqrt n})-1)^k}{k}+\vphi(\tfrac{\mathbf{t}}{\sqrt n})-1+\mathbf{t}^T\Sigma \mathbf{t}/2n
}\\
&\stackrel{\eqref{eq:exp-chf2}}{\le}
\Abs{\vphi(\tfrac{\mathbf{t}}{\sqrt n})-1}^2\sum_{k=2}^\infty\frac{2^{2-k}}{k}
+C(\tfrac{|\mathbf{t}|}{\sqrt n})^{2+\delta}\\
&\stackrel{\eqref{eq:exp-ch1}}{\le}
%C(\tfrac{|\mathbf{t}|}{\sqrt n})^{2+2\delta}+C(\tfrac{|\mathbf{t}|}{\sqrt n})^{2+\delta}
C(\tfrac{|\mathbf{t}|}{\sqrt n})^{2+\delta}.
\end{align*}
Further, for $|\mathbf{t}|/\sqrt n\le 2\varepsilon$, using the inequality $|e^x-1|\le |x|e^{|x|}$, for $0\le j\le 2$,
\begin{align*}
&\Abs{\vphi(\tfrac{\mathbf{t}}{\sqrt n})^{n-j}-\lambda_0(\mathbf{t})}\\
&=
e^{-(n-j)\mathbf{t}^T\Sigma \mathbf{t}/2n}
\Abs{e^{(n-j)\left(\log\vphi(\tfrac{\mathbf{t}}{\sqrt n})+\mathbf{t}^T\Sigma \mathbf{t}/2n\right)}-1}+\Abs{e^{-(n-j)\mathbf{t}^T\Sigma \mathbf{t}/2n}-e^{-\mathbf{t}^T\Sigma \mathbf{t}/2}}\\
&\le 
C e^{-c|\mathbf{t}|^2}n(\tfrac{|\mathbf{t}|}{\sqrt n})^{2+\delta}
e^{C\varepsilon^{\delta} |\mathbf{t}|^2}+C\tfrac{|\mathbf{t}|^2}{n}e^{-c|\mathbf{t}|^2}\\
&\le 
C n^{-\delta/2} (|\mathbf{t}|+1)^{2+\delta}e^{-c|\mathbf{t}|^2},
\end{align*}
where the last inequality holds if the constant $\e>0$ is sufficiently small. This completes the proof of part (a) for $|\mathbf{t}|\le 2\varepsilon\sqrt n$.

It remains to consider the case $\varepsilon\sqrt n\le |t_2|\le\pi\sqrt n$. Since the random variable $\bvar$ has a lattice distribution with span 1, by \cite[Lemma~1, \S~2]{bo80}, when $\varepsilon'\in(0,\e)$ is small enough,  then there exists $\gamma=\gamma(\varepsilon,\varepsilon')\in(0,1)$ such that 
\begin{equation}\label{eq:e17}
|\vphi(\tfrac{\mathbf{t}}{\sqrt n})|\le 1-\gamma, \quad\forall\, |t_1|\le\varepsilon'\sqrt n,\, \varepsilon\sqrt n\le |t_2|\le \pi\sqrt n.
\end{equation}
 Hence, when $|t_1|\le\varepsilon'\sqrt n$ and $\varepsilon\sqrt n\le |t_2|\le \pi\sqrt n$, for $j=0,1,2$,
\[|\vphi(\tfrac{\mathbf{t}}{\sqrt n})^{n-j}|+\lambda_0(\mathbf{t})\le Ce^{-cn}\le Cn^{-\delta/2}e^{-ct_2^2}\le Cn^{-\delta/2}e^{-c|\mathbf{t}|^2}.\]
Therefore, we have proved that (a) holds whenever $|t_1|\le \varepsilon'\sqrt n, |t_2|\le\pi\sqrt n$.

\noindent{\bf (b)}
Note that
\begin{equation}\label{eq:rv-chf-2der}
\tfrac{\partial^2}{\partial t_2^2}\lambda_n(\mathbf{t})
=-(n-1)\vphi(\tfrac{\mathbf{t}}{\sqrt n})^{n-2}E[\bvar e^{i\mathbf{t}\cdot \vvar/\sqrt n}]^2-\vphi(\tfrac{\mathbf{t}}{\sqrt n})^{n-1}E[\bvar^2e^{i\mathbf{t}\cdot \vvar/\sqrt n}].
\end{equation}
First, for any $\mathbf{t}=(t_1,t_2)\in\R^2$, 
\begin{align}\label{eq:e33}
\Abs{E[\bvar e^{i\mathbf{t}\cdot \vvar}-i(t_1\sigma_{12}+\sigma^2_2t_2)]}
&=
\Abs{E[\bvar(e^{i\mathbf{t}\cdot \vvar}-i\mathbf{t}\cdot \vvar-1)]}\nn\\
&\stackrel{\eqref{eq:exp-diff2} }{\le}
C_\delta |\mathbf{t}|^{1+\d} E\left[\bvar|\vvar|^{1+\d}\right]  \le 
C|\mathbf{t}|^{1+\delta}.
\end{align}
Thus for any $\mathbf{t}\in\R^2$, (using $|z^2-w^2|\leq |z-w|^2+2|z-w||w|$)
\begin{equation}\label{eq:1chZ-sq-diff}
\Abs{E[\bvar e^{i\mathbf{t}\cdot \vvar}]^2+(t_1\sigma_{12}+\sigma^2_2t_2)^2}
\le  C |\mathbf{t}|^{2+\delta}(1+|\mathbf{t}|^\delta).
\end{equation}
Next, for any $\mathbf{t}\in\R^2$,
\begin{equation}\label{eq:2chZ-sq-diff}
|E[\bvar^2e^{i\mathbf{t}\cdot \vvar}-\sigma_2^2]|
=
|E[\bvar^2(e^{i\mathbf{t}\cdot \vvar}-1)]|
\stackrel{\eqref{eq:exp-diff}}{\le}
CE[\bvar^2|\mathbf{t}\cdot \vvar|^\delta]
\le C|\mathbf{t}|^{\delta}.
\end{equation}
Combining \eqref{eq:1chZ-sq-diff} and \eqref{eq:2chZ-sq-diff}, we obtain for $|\mathbf{t}| \leq 2\pi\sqrt{n}$
\begin{equation*}
\Abs{\tfrac{\partial^2}{\partial t_2^2}\lambda_n(\mathbf{t})-\tfrac{n-1}{n}\vphi(\tfrac{\mathbf{t}}{\sqrt n})^{n-2}(t_1\sigma_{12}+\sigma^2_2t_2)^2+\vphi(\tfrac{\mathbf{t}}{\sqrt n})^{n-1}\sigma_2^2}
\le
C \frac{|\mathbf{t}|^{2+\d}+|\mathbf{t}|^\d}{n^{\d/2}} \Abs{\vphi(\tfrac{\mathbf{t}}{\sqrt{n}})}^{n-2}
\end{equation*}
Furthermore, since
\begin{equation}\label{eq:e4}
\frac{\partial^2}{\partial t_2^2}\lambda_0(\mathbf{t})=(\sigma_{12}t_1+\sigma_2^2t_2)^2\lambda_0(\mathbf{t})-\sigma_2^2\lambda_0(\mathbf{t}),
\end{equation}
we have for $|\mathbf{t}|\leq  2\pi\sqrt{n}$,
\begin{align}\label{eq:e5}
\MoveEqLeft\Abs{\tfrac{\partial^2}{\partial t_2^2}(\lambda_n(\mathbf{t})-\lambda_0(\mathbf{t}))}\\
&\le 
C|\mathbf{t}|^2\Abs{\tfrac{n-1}{n}\vphi(\tfrac{\mathbf{t}}{\sqrt n})^{n-2}-\lambda_0(\mathbf{t})}
+C\Abs{\vphi(\tfrac{\mathbf{t}}{\sqrt n})^{n-1}-\lambda_0(\mathbf{t})}+C\tfrac{|\mathbf{t}|^{2+\d}+|\mathbf{t}|^\d}{n^{\delta/2}}\Abs{\vphi(\tfrac{\mathbf{t}}{\sqrt n})}^{n-2}.\nn
\end{align}
Note that (a) implies that
\begin{equation}\label{eq:e34}
|\vphi(\tfrac{\mathbf{t}}{\sqrt n})^{n-2}|\le Ce^{-c|\mathbf{t}|^2} \quad\mbox{when } |t_1|\le \varepsilon\sqrt n \mbox{ and } |t_2|\le\pi\sqrt n.
\end{equation}
Statement (b) now follows from (a) and \eqref{eq:e5}.
%{\color{red}
%(I think here we are also using that 
%\begin{align*}
% \Abs{\phi(\tfrac{t}{\sqrt{n}})}^{n-2} 
%& \leq \Abs{ \phi(\tfrac{t}{\sqrt{n}})^{n-2} - \l_0(\mathbf{t}) } + |\l_0(\mathbf{t})| \\
%& \leq \frac{C}{n^{\d/2}} e^{-c |\mathbf{t}|^2} + e^{-c|\mathbf{t}|^2}. 
%\end{align*}
%and that $|\mathbf{t}|^\gamma e^{-c |\mathbf{t}|^2} \leq e^{-c' |\mathbf{t}|^2}$.) 
%}

\noindent{\bf (c)}
In what follows, for $t=(t_1,t_2)$, we let $\bar{\mathbf{t}}=(\bar t_1,\bar t_2):=\mathbf{t}/\sqrt n$ denote the rescaled vector. Set $H_j(\mathbf{t})=\vphi(\bar{\mathbf{t}})^{n-j}-\lambda_0(\mathbf{t}), j=0,1,2$, and let $K_1(\mathbf{t}):=E[\bvar^2e^{i\bar{\mathbf{t}}\cdot \vvar}], K_2(\mathbf{t}):=E[\bvar e^{i\bar{\mathbf{t}}\cdot \vvar}],  K_3(\mathbf{t}):=-(\sigma_{12}t_1+\sigma_2^2 t_2)^2$.  
We define the functions $\tilde\Lambda(\mathbf{t})=\Lambda(0,t_2), \tilde\Lambda_0(\mathbf{t})=\lambda_0(0,t_2), \tilde H_j(\mathbf{t})=H_j(0,t_2)$ and $\tilde K_i(\mathbf{t})=K_i(0,t_2)$, $0\le j\le 2, 1\le i\le 3$. 
Our goal is to obtain a bound  for
\[
\Lambda(\mathbf{t})-\tilde\Lambda(\mathbf{t})+\vphi(0,\bar t_2)^{n-1}(K_1-\tilde K_1).
\]
By \eqref{eq:rv-chf-2der} and \eqref{eq:e4},
\[
\Lambda(\mathbf{t})=-(n-1)H_2K_2^2-H_1K_1-\lambda_0[(n-1)K_2^2-K_3+(K_1-\sigma_2^2)]
\]
Setting $\Delta_j:=H_j-\tilde H_j, 0\le j\le 2$, we have (Note that $\vphi(0,\bar t_2)^{n-1}=\tilde H_1+\tilde\Lambda_0$.)
\begin{align}\label{eq:e7}
\MoveEqLeft\Lambda(\mathbf{t})-\tilde\Lambda(\mathbf{t})+\vphi(0,\bar t_2)^{n-1}(K_1-\tilde K_1)\nn\\
&=
-[(n-1)\Delta_2K_2^2+\Delta_1K_1]-(\lambda_0-\tilde\Lambda_0)[(n-1)K_2^2-K_3+(K_1-\sigma_2^2)]\nn\\
&-(n-1)\tilde H_2(K_2^2-\tilde K_2^2)-\tilde\Lambda_0[(n-1)(K_2^2-\tilde K_2^2)-(K_3-\tilde K_3)]\nn\\
&:=I_1+I_2+I_3+I_4.
\end{align}
We will estimate the four terms in the following steps.
\begin{enumerate}[Step 1.]
\item
To estimate $I_1$, we will first show that for $|t_1|\le \varepsilon\sqrt n, |t_2|\le\pi\sqrt n$,
\begin{equation}\label{eq:e6}
|\Delta_j|\le Cn^{-\delta/2}|t_1|e^{-ct_2^2}, 
\qquad 0\le j\le 2.
\end{equation} 
For simplicity we only provide proof for the case $j=0$.
By (a) and \eqref{eq:e34},
\begin{align*}
|\tfrac{\partial}{\partial t_1}H_0|
&=|\tfrac{\partial}{\partial t_1}(\vphi(\bar{\mathbf{t}})^n-\lambda_0(\mathbf{t}))|\\
&=|\sqrt n\vphi(\bar{\mathbf{t}})^{n-1}E[i\avar(e^{i\bar{\mathbf{t}}\cdot \vvar}-i\bar{\mathbf{t}}\cdot \vvar-1)]+(\lambda_0-\vphi(\bar{\mathbf{t}})^{n-1})(t_1\sigma_1^2+t_2\sigma_{12})|\\
&\stackrel{\eqref{eq:exp-diff2}}{\le}
Cn^{-\delta/2}e^{-c|\mathbf{t}|^2}.
\end{align*}
Thus 
$|\Delta_0(\mathbf{t})|=\Abs{\int_0^{t_1}\frac{\partial}{\partial t_1}H_0(s,t_2)\dd s}
\le 
Cn^{-\delta/2}|t_1|e^{-ct_2^2}$. Display \eqref{eq:e6} is proved for $j=0$.
The proofs for $j=1,2$ are similar. Further, by \eqref{eq:e33} and \eqref{eq:2chZ-sq-diff},  we have$|K_2|\le Cn^{-1/2}|\mathbf{t}|$ and $|K_1|\le C$ when $|\bar t_1|\le\varepsilon, |\bar t_2|\le\pi$. Hence, the term $I_1$ defined in \eqref{eq:e7} has bound
\[
|I_1|\le Cn^{-\delta/2}|t_1|(1+|\mathbf{t}|)^2e^{-ct_2^2}.
\]
\item 
To estimate $I_2$, noting that $|\lambda_0-\tilde\Lambda_0|\le C|t_1||\mathbf{t}|e^{-ct_2^2}$, it suffices to show that  
\begin{equation}\label{eq:e8}
|(n-1)K_2^2-K_3+(K_1-\sigma_2^2)|\le Cn^{-\delta/2}(1+|\mathbf{t}|)^3.
\end{equation}
%Using \eqref{eq:exp-diff2}, we get
%$|\sqrt n K_2-i(\sigma_{12}t_1+\sigma_2^2t_2)|
%=\sqrt n|E[Y(e^{i\bar{\mathbf{t}}\cdot Z}-i\bar{\mathbf{t}}\cdot Z-1)]|
%\le  Cn^{-\delta/2}|\mathbf{t}|^{1+\delta}$. 
By \eqref{eq:1chZ-sq-diff} and \eqref{eq:2chZ-sq-diff}, when $|\mathbf{t}|\le 2\pi\sqrt n$, we have
$|nK_2^2-K_3|\le Cn^{-\delta/2}|\mathbf{t}|^{2+\delta}$ and $|K_1-\sigma_2^2|\le Cn^{-\delta/2}|\mathbf{t}|^\delta$.
Thus \eqref{eq:e8} is obtained and we can conclude that  for $|\mathbf{t}|\le2\pi\sqrt n$,
\[
|I_2|\le 
Cn^{-\delta/2}|t_1|(1+|\mathbf{t}|)^4e^{-ct_2^2}.
\]
\item 
To estimate $I_4$, it suffices to prove that for $|\mathbf{t}|\le2\pi\sqrt n$,
\begin{equation}\label{eq:e9}
|(n-1)(K_2^2-\tilde K_2^2)-(K_3-\tilde K_3)|\le Cn^{-\delta/2}|t_1||\mathbf{t}|^{1+\delta}.
\end{equation}
Indeed, by \eqref{eq:exp-diff2},
$|K_2-(\tilde K_2+i\sigma_{12}\bar t_1)|=|E[\bvar e^{i\bar t_2\bvar}(e^{i\bar t_1\avar}-i\bar t_1 \avar-1)]|\le C|\bar t_1|^{1+\delta}$. Further, by \eqref{eq:e33}, we have $|K_2|+|\tilde K_2|\le C|\bar{\mathbf{t}}|$  when $|\bar{\mathbf{t}}|\le 2\pi$. Hence $|K_2^2-(\tilde K_2+i\sigma_{12}\bar t_1)^2|\le Cn^{-(2+\delta)/2}|t_1||\mathbf{t}|^{1+\delta}$. On the other hand, 
\[
|n((\tilde K_2+i\sigma_{12}\bar t_1)^2-\tilde K_2^2)-(K_3-\tilde K_3)|
=\Abs{2i\sqrt n\sigma_{12}t_1 E[\bvar(e^{i\bar t_2 \bvar}-i\bar t_2\bvar-1)]}
\le Cn^{-\delta/2}|t_1||\mathbf{t}|^{1+\delta}.
\]
Thus we conclude that when $|\mathbf{t}|\le 2\pi\sqrt n$,
\[
|n(K_2^2-\tilde K_2^2)-(K_3-\tilde K_3)|\le Cn^{-\delta/2}|t_1||\mathbf{t}|^{1+\delta}.
\]
Noticing that $|K_3-\tilde K_3|\le C|t_1||\mathbf{t}|$, we get
\begin{equation}\label{eq:e10}
n|K_2^2-\tilde K_2^2|
\le C|t_1||\mathbf{t}| \qquad\text{ when }|\mathbf{t}|\le 2\pi\sqrt n.
\end{equation}
Display \eqref{eq:e9} then follows, and we obtain for $|\mathbf{t}|\le2\pi\sqrt n$,
\[
|I_4|\le 
Cn^{-\delta/2}|t_1||\mathbf{t}|^{1+\delta}e^{-ct_2^2}.
\]
\item 
Finally, by (a), we have $|\tilde H_2|\le Cn^{-\delta/2}e^{-ct_2^2}$. This inequality, together with \eqref{eq:e10}, yields
\[
|I_3|\le  Cn^{-\delta/2}|t_1||\mathbf{t}|e^{-ct_2^2} \qquad\text{when }|t_1|\le\varepsilon\sqrt n \text{ and }|t_2|\le\pi\sqrt n.
\]
\end{enumerate}
Our proof is complete.
\end{proof}

\subsection{Proof of Proposition~\ref{thm:LLT}}\label{subsec:pf-llt}
%\begin{rem}
% Applying \eqref{eq:e17} in Steps 1 and 3 of the proof of Proposition~\ref{prop:chf-for-LLT}(c), we obtain that $|I_1|, |I_3|\le C|t_1|e^{-cn}$ when $|\bar t_1|\le\varepsilon$ and $\varepsilon'\le |\bar t_2|\le \pi$. Therefore, when $|t_1|\le\varepsilon\sqrt n$ and $\varepsilon'\sqrt n\le |t_2|\le \pi\sqrt n$, there exist constants $C,c$ depending on $\varepsilon,\varepsilon'$ such that
%\begin{equation}\label{eq:e18}
%\Abs{\Lambda(t_1,t_2)-\Lambda(0,t_2)+\vphi(0,\tfrac{t_2}{\sqrt n})^{n-1}E[Y^2(e^{i\mathbf{t}\cdot Z/\sqrt n}-e^{it_2Y/\sqrt n})]}
%\le
%C|t_1|e^{-cn}.
%\end{equation}
%\end{rem}

When $B$ is a continuous random variable, the proof of Proposition~\ref{thm:LLT} can be found in \cite{shev17} or \cite{bcg11}. For our case where $B$ is a discrete random variable,  we include the proof as follows for the purpose of completeness, since it is rather elementary.

\begin{proof}[Proof of Proposition~\ref{thm:LLT}]
First, we will express the right-hand side of the equality in terms of the characteristic function.
We let $\tilde\lambda_0(t)=\exp(-\sigma_2^2t^2/2)$ and let $\tilde\lambda_n(t)$, $t\in\R$,  denotes the characteristic functions of $\bsum_n/\sqrt n$. Then for any $y\in\Z$,
\begin{equation}\label{eq:e35}
1_{\bsum_n=y+n\rho}=\frac{1}{2\pi}\int_{-\pi}^\pi e^{it (\bsum_n-n\rho)} e^{-it y}\dd t
=\frac{1}{2\pi\sqrt n}\int_{-\pi\sqrt n}^{\pi\sqrt n} e^{it \bsum_n/\sqrt n} e^{-it y_n}\dd t
\end{equation}
and so
\begin{align*}
P(\bsum_n/\sqrt n=y_n)=
\frac{1}{2\pi\sqrt n}\int_{-\pi\sqrt n}^{\pi\sqrt n}\tilde\lambda_n(t) e^{-it y_n}\dd t.
\end{align*}
Using integration by parts, we get 
\[
y_n^2 P(\bsum_n/\sqrt n=y_n)
=
\frac{-1}{2\pi\sqrt n}\int_{-\pi\sqrt n}^{\pi\sqrt n}
\tilde\lambda_n''(t)e^{-it y_n}\dd t
\]
and
\[
\tfrac{y_n^2}{\sqrt{2\pi \sigma_2^2}}e^{-y_n^2/2\sigma_2^2}
=
\frac{-1}{2\pi}\int_{-\infty}^\infty \tilde\lambda_0''(t)e^{-ity_n}\dd t.
\]
Thus
\begin{align*}
\MoveEqLeft(1+y_n^2)\Abs{\sqrt n P(\bsum_n/\sqrt n=y_n)-\tfrac{1}{\sqrt{2\pi \sigma_2^2}}e^{-y_n^2/2\sigma_2^2}}\nonumber\\
&=
\frac{1}{2\pi}\Abs{
\int_{-\pi\sqrt n}^{\pi\sqrt n}(\tilde\lambda_n-\tilde\lambda_n'')e^{-it y_n}\dd t-
\int_{-\infty}^\infty (\tilde\lambda_0-\tilde\lambda_0'')e^{-ity_n}\dd t}\nonumber\\
&\le 
\frac{1}{2\pi}\int_{-\pi\sqrt n}^{\pi\sqrt n}
\Abs{\tilde\lambda_n-\tilde\lambda_n''-\tilde\lambda_0+\tilde\lambda_0''}\dd t
+\int_{|t|>\pi\sqrt n}|\tilde\lambda_0-\tilde\lambda_0''|\dd t.
\end{align*}
Note that $\int_{|t|>\pi\sqrt n}|\tilde\lambda_0-\tilde\lambda_0''|\dd t\le Ce^{-cn}$. 
On the other hand, by Proposition~\ref{prop:chf-for-LLT}(a)(b),
\begin{align*}
\int_{-\pi\sqrt n}^{\pi\sqrt n}
\Abs{\tilde\lambda_n-\tilde\lambda_n''-\tilde\lambda_0+\tilde\lambda_0''}\dd t
\le 
\int_{-\pi\sqrt n}^{\pi\sqrt n}Cn^{-\delta/2}e^{-ct^2}\dd t
\le Cn^{-\delta/2}.
\end{align*}
The proposition follows.
\end{proof}

\subsection{Proof of Theorem~\ref{thm:semi-local-LLT}}\label{subsec:pf-semi-llt}

The proof relies on the expression (cf. \eqref{eq:kmgv-dist} and \eqref{eq:e14}) of the Kolmogorov distance in terms of characteristic functions, where a probability measure $v_J$ is introduced to make the distribution functions smooth and to truncate their characteristic functions. To be specific, define the measure $v_\polya (\dd x):=\tfrac{1-\cos(\polya x)}{\pi \polya x^2}\dd x$ on $\R$, where $\polya>0$ is a constant to be determined.
Note that its characteristic function $\hat v_\polya (x)=(1-\tfrac{|x|}{\polya})_+$ is supported on $[-\polya,\polya]$.
\begin{proof}
In what follows, for any measure (or distribution function) $\mu$, we denote its  characteristic function by $\hat\mu$. Recall that the characteristic functions of $\tfrac{\vsum_n}{\sqrt n}$, $\mathcal N$ are denoted by
$\lambda_n(\mathbf{t})$ and $\lambda_0(\mathbf{t})$, $\mathbf{t}\in\R^2$. Also, for simplicity we will suppress the subscript $\Sigma$  and write $\psi_\Sigma$ simply as $\psi$.

\begin{enumerate}[{\it Step 1.}]
\item First, we will express the left-side of Theorem~\ref{thm:semi-local-LLT} in terms of measures with compactly supported characteristic functions, i.e.\ \eqref{eq:e13}.
For any fixed  $y\in\Z$, let $F_n(x,y_n):=P(\asum_n/\sqrt n\le x, \bsum_n/\sqrt n=y_n)$ and denote the corresponding conditional distribution functions by $\bar F_n(x):=\tfrac{F_n(x,y_n)}{F_n(\infty, y_n)}$, $\bar \psi_n(x):=\tfrac{\psi(x,y_n)}{\psi(\infty, y_n)}$.
Of course, since the case $F_n(\infty,y_n)=0$ follows immediately from Proposition~\ref{thm:LLT}, we only consider the non-trivial case when $F_n(\infty,y_n)>0$, so that $\bar F_n$ is well-defined.
Then 
\begin{align*}
\MoveEqLeft(1+y_n^2)\Abs{\sqrt n F_n(x,y_n)-\psi(x,y_n)}\\
&=
(1+y_n^2)\Abs{(\bar F_n(x)-\bar\psi_n(x))\psi(\infty,y_n)+(\sqrt n F_n(\infty, y_n)-\psi(\infty,y_n))\bar F_n(x)}\\
&\le 
(1+y_n^2)\psi(\infty,y_n)\Abs{\bar F_n(x)-\bar\psi_n(x)}+Cn^{-\delta/2},
\end{align*}
where in the last inequality we used Proposition~\ref{thm:LLT}.
% and $\sup_y (1+y^2)F_n(\infty, y)\le C/\sqrt n$ (also by Theorem~\ref{thm:LLT}).
Further,  let $\bar F_n^\polya $ (and $\bar \psi_n^\polya$) be the convolution of $\bar F_n$ (and $\bar\psi_n$, resp.) and the measure $v_\polya$. Then, by \cite[Lemma~1, XVI.3]{fe71},
\begin{equation}\label{eq:kmgv-dist}
\sup_x\Abs{\bar F_n(x)-\bar\psi_n(x)}
\le 
2\sup_x\Abs{\bar F_n^\polya (x)-\bar \psi_n^\polya (x)}+\frac{24}{\pi\polya}\sup_x\Abs{\frac{\partial}{\partial x}\bar\psi_n(x)}.
\end{equation}
From now on we take $\polya=\varepsilon\sqrt n$, where $\varepsilon$ is the constant in Proposition~\ref{prop:chf-for-LLT}. Collecting the above inequalities we get 
\begin{equation}\label{eq:e13}
\sup_{x\in\R,y\in\Z}(1+y_n^2)\Abs{\sqrt n F_n(x,y_n)-\psi(x,y_n)}
\le 
C\sup_{x\in\R,y\in\Z}(1+y_n^2)\psi(\infty,y_n)\Abs{\bar F_n^\polya (x)-\bar \psi_n^\polya (x)}+Cn^{-\delta/2}.
\end{equation}

\item 
Let 
\[
\Delta_n^\polya (x):=\bar F_n^\polya (x)-\bar \psi_n^\polya (x).
\]
 Our second step is to write $\Delta_n^\polya $ in terms of characteristic functions, cf \eqref{eq:e15}.
By Fourier's inversion formula for distribution functions \cite[(3.11), XV.4]{fe71}, for any $x>a$,
\begin{align}\label{eq:e14}
\bar F_n^\polya (x)-\bar F_n^\polya (a)
&=
\frac{1}{2\pi}\int_{-\polya}^\polya 
\frac{e^{-it_1x}-e^{-it_1 a}}{it_1}\hat{\bar F}_n^\polya (t_1)\dd t_1,\\
\bar \psi_n^\polya (x)-\bar \psi_n^\polya (a)
&=
\frac{1}{2\pi}\int_{-\polya}^\polya 
\frac{e^{-it_1x}-e^{-it_1 a}}{it_1}\hat{\bar \psi}_n^\polya (t_1)\dd t_1.\nn
\end{align}
%Since 
%\begin{align*}
%1_{\bsum_n/\sqrt n=y_n}
%=
%\frac{1}{2\pi}\int_{-\pi}^\pi e^{it_2 \bsum_n}e^{-t_2y_n\sqrt n}\dd t_2
%=
%\frac{1}{2\pi\sqrt n}\int_{-\pi\sqrt n}^{\pi\sqrt n} e^{it_2 \bsum_n/\sqrt n}e^{-t_2y_n}\dd t_2,
%\end{align*}
Note that (let $\mathbf{t}:=(t_1,t_2)$)
\begin{align}\label{eq:e20}
\hat{\bar F}_n^\polya (t_1)=\hat{\bar F}_n(t_1)\hat v_\polya (t_1)
&=\frac{\hat v_\polya (t_1)}{F_n(\infty,y_n)}
E[e^{i\asum_n t_1/\sqrt n}1_{\bsum_n/\sqrt n=y_n}]\nn\\
&\stackrel{\eqref{eq:e35}}{=}
\frac{\hat v_\polya (t_1)}{2\pi\sqrt n F_n(\infty,y_n)}
\int_{-\pi\sqrt n}^{\pi\sqrt n}\lambda_n(\mathbf{t})e^{-it_2 y_n}\dd t_2.
\end{align}
On the other hand,
\begin{align}\label{eq:e21}
\hat{\bar \psi}_n^\polya (t_1)=\hat{\bar \psi}_n (t_1)\hat v_\polya (t_1)
=
\frac{\hat v_\polya (t_1)}{2\pi\psi(\infty,y_n)}\int_{-\infty}^\infty
\lambda_0(\mathbf{t})e^{-it_2 y_n}\dd t_2.
\end{align}
These equalities, together with those in \eqref{eq:e14}, yield

\begin{align}\label{eq:e30}
\MoveEqLeft\sqrt n F_n(\infty, y_n)(\bar F_n^\polya (x)-\bar F_n^\polya (a))-\psi(\infty, y_n)(\bar \psi_n^\polya (x)-\bar \psi_n^\polya (a))\\
&=
\int_{|t_1|\le\polya,t_2\in\R}\frac{\hat v_\polya (t_1)}{(2\pi)^2}\cdot \frac{e^{-it_1 a}-e^{-it_1x}}{it_1}
e^{-it_2y_n}\left(
\lambda_n(\mathbf{t})1_{|t_2|\le \pi\sqrt n}
-\lambda_0(\mathbf{t})
\right)\dd\mathbf{t}. \nonumber
\end{align}

Further, integration by parts in \eqref{eq:e20} and \eqref{eq:e21} gives 
\[
y_n^2\hat{\bar F}_n^\polya (t_1)
=
-\frac{\hat v_\polya (t_1)}{2\pi\sqrt n F_n(\infty,y_n)}
\int_{-\pi\sqrt n}^{\pi\sqrt n}
e^{-it_2 y_n}\tfrac{\partial^2}{\partial t_2^2}\lambda_n(\mathbf{t})\dd t_2,
\]
\[
y_n^2\hat{\bar \psi}_n^\polya (t_1)
=
-\frac{\hat v_\polya (t_1)}{2\pi\psi(\infty,y_n)}\int_{-\infty}^\infty
e^{-it_2 y_n}\tfrac{\partial^2}{\partial t_2^2}\lambda_0(\mathbf{t})\dd t_2.
\]
Similar to \eqref{eq:e30}, we then have
\begin{align}\label{eq:e31}
\MoveEqLeft y_n^2\left[\sqrt n F_n(\infty, y_n)(\bar F_n^\polya (x)-\bar F_n^\polya (a))-\psi(\infty, y_n)(\bar \psi_n^\polya (x)-\bar \psi_n^\polya (a))\right]\\
&=
\int_{|t_1|\le\polya,t_2\in\R}\frac{\hat v_\polya (t_1)}{(2\pi)^2}\cdot \frac{e^{-it_1 a}-e^{-it_1x}}{it_1}
e^{-it_2y_n}\left(
\tfrac{\partial^2}{\partial t_2^2}\lambda_n(\mathbf{t})1_{|t_2|\le \pi\sqrt n}
-\tfrac{\partial^2}{\partial t_2^2}\lambda_0(\mathbf{t})
\right)\dd\mathbf{t}.\nonumber
\end{align}
Combining \eqref{eq:e30} and \eqref{eq:e31}, we get for any $x>a$, 
\begin{align}\label{eq:e15}
&(1+y_n^2)\psi(\infty,y_n)(\Delta_n^\polya (x)-\Delta_n^\polya (a))\\
&=
(1+y_n^2)[\psi(\infty,y_n)-\sqrt n F_n(\infty,y_n)](\bar F_n^\polya (x)-\bar F_n^\polya (a))\nn
\\
&+\int_{|t_1|\le\polya,t_2\in\R}G_{n,\polya}(t_1)
e^{-it_2y_n}\left[
(\lambda_n(\mathbf{t})-\tfrac{\partial^2}{\partial t_2^2}\lambda_n(\mathbf{t}))1_{|t_2|\le \pi\sqrt n}
-\lambda_0(\mathbf{t})+\tfrac{\partial^2}{\partial t_2^2}\lambda_0(\mathbf{t})
\right]\dd\mathbf{t},\nn
\end{align}
where
\begin{equation}\label{eq:e11}
G_{n,\polya}(t_1)=G_{n,\polya}(t_1,x,a):=
\frac{\hat v_\polya (t_1)}{(2\pi)^2}\cdot \frac{e^{-it_1 a}-e^{-it_1x}}{it_1}.
\end{equation}

%\begin{align}\label{eq:e15}
%\MoveEqLeft y_n^2\Abs{\Delta_n^T(x)-\Delta_n^T(a)}\\
%&=
%\bigg|(\tfrac{\sqrt n F_n(\infty,y_n)}{\psi(\infty,y_n)}-1)(\bar F_n^T(x)-\bar F_n^T(a))
%+\int_{|t_1|\le\polya,|t_2|>\pi\sqrt n}G_{n,\polya}(t_1)e^{-it_2y_n}\tfrac{\partial^2}{\partial t_2^2}\lambda_0\dd t
%\nn\\
%&\qquad+\int_{|t_1|\le\polya,|t_2|\le \pi\sqrt n}G_{n,\polya}(t_1)e^{-it_2y_n}\tfrac{\partial^2}{\partial t_2^2}(\lambda_n-\lambda_0)\dd t\bigg| \nn
%\end{align}
%for any $x>a$, where
%\begin{equation}\label{eq:e11}
%G_{n,\polya}(t_1)=G_{n,\polya}(t_1,x,a):=
%\frac{\hat v_\polya (t_1)}{(2\pi)^2\psi(\infty,y_n)}\cdot \frac{e^{-it_1 a}-e^{-it_1x}}{it_1}.
%\end{equation} 
\item 
Our next goal is to bound \eqref{eq:e15} by $Cn^{-\delta/2}$.  Set
\[
U(\mathbf{t}):=(\lambda_n-\lambda_0)-\tfrac{\partial^2}{\partial t_2^2}(\lambda_n-\lambda_0).
\]
Note that by \eqref{eq:e15} and Proposition~\ref{thm:LLT}, we have for $x>a$,
\begin{align}\label{eq:e32}
\MoveEqLeft(1+y_n^2)\psi(\infty,y_n)\Abs{\Delta_n^\polya (x)-\Delta_n^\polya (a)}\nn\\
&\le Cn^{-\delta/2}+
\Abs{\int_{|t_1|\le\polya,|t_2|>\pi\sqrt n}G_{n,\polya}(t_1)e^{-it_2y_n}(\lambda_0-\tfrac{\partial^2}{\partial t_2^2}\lambda_0)\dd\mathbf{t}}\nn\\
&\qquad+\Abs{\int_{|t_1|\le\polya, |t_2|\le \pi\sqrt n}G_{n,\polya}(t_1)e^{-it_2y_n}U(\mathbf{t})\dd\mathbf{t}}\nn\\
&=:Cn^{-\delta/2}+ I_5+I_6.
\end{align}
We  start with $I_6$.
Recall $\polya=\varepsilon\sqrt n$ and for any $K>0$ let $\mathcal G_n(K)$ denote the set of ``good" functions $f:\R^2\to\C$ such that 
% there exists a constant $K>0$ such that
\[
\sup_{x,y,a} \Abs{\int_{|t_1|\le\polya,|t_2|\le\pi\sqrt n}G_{n,\polya}(t_1,x,a)e^{-it_2y_n}f(\mathbf{t})\dd\mathbf{t}}
\le K n^{-\delta/2}.
\]
%Note that the constant $C$ is independent of $x, y$ and $a$.
We will show that 
\begin{equation}\label{eq:e16}
U(\mathbf{t})\in\mathcal G_n(C).
\end{equation}
Notice that every $f:\R^2\to\C$ that satisfies $|f(\mathbf{t})|\le Cn^{-\delta/2}|t_1|e^{-c|\mathbf{t}|^2}$ for $|t_1|\le\varepsilon\sqrt n, |t_2|\le\pi\sqrt n$ is in $\mathcal G_n(C)$. Set 
\begin{equation}\label{eq:e19}
R(\mathbf{t}):=\vphi(0,\tfrac{t_2}{\sqrt n})^{n-1}E[\bvar^2(e^{i\mathbf{t}\cdot \vvar/\sqrt n}-e^{it_2\bvar/\sqrt n})].
\end{equation}
Then, letting $c_0$ be the same as in Proposition~\ref{prop:chf-for-LLT}(c), 
\[
U(\mathbf{t})=
e^{-c_0t_1^2}(U-U(0,t_2)+R)+(1-e^{-c_0t_1^2})U
+e^{-c_0t_1^2}U(0,t_2)-e^{-c_0t_1^2}R.
\]
We will show that $U\in\mathcal G_n(C)$ by showing that all the four terms on the right above are in $\mathcal G_n(C)$. Note that the constant $C$ may differ for each of these four terms.
% from line to line.
When $|t_1|\le\varepsilon\sqrt n$ and $|t_2|\le\pi\sqrt n$, by \eqref{eq:e6}, $|\lambda_n(\mathbf{t})-\lambda_0(\mathbf{t})-[\lambda_n(0,t_2)-\lambda_0(0,t_2)]|\le Cn^{-\delta/2}|t_1|e^{-ct_2^2}$. This inequality and Proposition~\ref{prop:chf-for-LLT}(c) yield
$e^{-c_0t_1^2}|U-U(0,t_2)+R|\le Cn^{-\delta/2}|t_1|e^{-c|\mathbf{t}|^2}$. Hence  there exists a constant $C_1$ such that 
$e^{-c_0t_1^2}(U-U(0,t_2)+R)\in\mathcal G_n(C_1)$. Also, using $1-e^{-c_0t_1^2}\le Ct_1^2$ and Proposition~\ref{prop:chf-for-LLT}(a)(b), we have $(1-e^{-c_0t_1^2})U(\mathbf{t})\in\mathcal G_n(C_2)$  for some constant $C_2$. Further, 
\begin{align}\label{eq:e36}
&\Abs{\int_{|t_1|\le\polya,|t_2|\le\pi\sqrt n}G_{n,\polya}(t_1)e^{-it_2y_n}e^{-c_0t_1^2}U(0,t_2)\dd\mathbf{t}}\\
&\le 
\Abs{\int_{|t_1|\le\polya}G_{n,\polya}(t_1)e^{-c_0t_1^2}\dd t_1}
\Abs{\int_{|t_2|\le \pi\sqrt n}U(0,t_2)e^{-it_2y_n}\dd t_2}.\nn
\end{align}
By the inversion formula, for $x>a$, the first integral $\int_{|t_1|\le\polya}G_{n,\polya}(t_1)e^{-c_0t_1^2}\dd t_1=\mu_J(a,x)/2\pi<1/2\pi$, where $\mu_\polya $ denotes the probability measure of  $v_\polya *\mathcal Z_{2c_0}$ and $\mathcal Z_{2c_0}$ denotes the normal distribution with mean 0 and variance $2c_0$. On the other hand, by Proposition~\ref{prop:chf-for-LLT}(a)(b), we have $|U(0,t_2)|\le Cn^{-\delta/2}e^{-ct_2^2}$ for $|t_2|\le\pi\sqrt n$, which implies $\Abs{\int_{|t_2|\le \pi\sqrt n}U(0,t_2)e^{-it_2y_n}\dd t_2}\le Cn^{-\delta/2}$. Hence the integral in \eqref{eq:e36} is bounded by $Cn^{-\delta/2}$ and so $e^{-c_0t_1^2}U(0,t_2)\in\mathcal G_n(C_3)$  for some constant $C_3$. 

To prove $U(\mathbf{t})\in\mathcal G_n(C)$ it remains to show that $e^{-c_0t_1^2}R\in\mathcal G_n(C_4)$ for some constant $C_4$. Indeed, by the fact that $\hat v_\polya $ is supported on $[-\polya,\polya]$ and Fubini's theorem, (Recall the definition of $G_{n,\polya}$ at \eqref{eq:e11}.)
\begin{align}\label{eq:e12}
&\int_{|t_1|\le\polya,|t_2|\le\pi\sqrt n}G_{n,\polya}(t_1)e^{-it_2y_n}e^{-c_0t_1^2}R(\mathbf{t})\dd\mathbf{t}\\
&=
E\left[\bvar^2
\int_{-\infty}^\infty G_{n,\polya}(t_1)e^{-c_0t_1^2}(e^{it_1\avar/\sqrt n}-1)\dd t_1
\int_{|t_2|\le \pi\sqrt n}\vphi(0,\tfrac{t_2}{\sqrt n})^{n-1}e^{-it_2y_n}e^{it_2\bvar/\sqrt n}\dd t_2
\right].\nn
\end{align}
 By the inversion formula for distribution functions, 
\begin{align*}
 \int_{-\infty}^\infty G_{n,\polya}(t_1)e^{-c_0t_1^2}(e^{it_1\avar/\sqrt n}-1)\dd t_1
 &=
C[\mu_\polya (x,x+\tfrac{\avar}{\sqrt n})- \mu_\polya (a,a+\tfrac{\avar}{\sqrt n})]1_{\avar\ge 0}\\
&+C [\mu_\polya (a+\tfrac{\avar}{\sqrt n},a)-\mu_\polya (x+\tfrac{\avar}{\sqrt n},x)]1_{\avar< 0}.
 \end{align*} 
Since $\mu_\polya $ has (by the inversion formula) bounded density, for any $x\in\R$,
\[
\mu_\polya (x,x+\tfrac{\avar}{\sqrt n})1_{\avar\ge 0}+\mu_\polya (x+\tfrac{\avar}{\sqrt n},x)1_{\avar<0}
\le C|\tfrac{\avar}{\sqrt n}|\wedge 1\le C|\tfrac{\avar}{\sqrt n}|^\delta.
\]
Also, by \eqref{eq:e34}, the second integral on the right side of \eqref{eq:e12} is bounded in absolute value  by $\int_{t_2\in\R}|\vphi(0,\tfrac{t_2}{\sqrt n})|^{n-1}\dd t_2<C$.
Then, by \eqref{eq:e12} we have
\[
\Abs{\int_{|t_1|\le\polya,|t_2|\le\pi\sqrt n}G_{n,\polya}(t_1)e^{-it_2y_n}e^{-c_0t_1^2}R(\mathbf{t})\dd \mathbf{t}}
\le 
CE[\bvar^2|\tfrac{\avar}{\sqrt n}|^\delta]\le Cn^{-\delta/2}.
\]
So $e^{-c_0t_1^2}R\in\mathcal G_n(C_4)$ for some constant $C_4>0$ and \eqref{eq:e16} is proved. Therefore $I_6\le Cn^{-\delta/2}$.
\item
%We will bound \eqref{eq:e15} by $Cn^{-\delta/2}$.
%By \eqref{eq:e16} and Theorem~\ref{thm:LLT}, display \eqref{eq:e15} gives upper bound
%\begin{align*}
%y_n^2\Abs{\Delta_n^T(x)-\Delta_n^T(a)}
%&\le 
%Cn^{-\delta/2}+
%\Abs{\int_{|t_1|\le\polya,|t_2|>\pi\sqrt n}G_{n,\polya}(t_1)e^{-it_2y_n}\tfrac{\partial^2}{\partial t_2^2}\lambda_0\dd t}\\
%&\qquad+\Abs{\int_{|t_1|\le\polya,\varepsilon/2\le |t_2|/\sqrt n\le \pi}G_{n,\polya}(t_1)e^{-it_2y_n}\tfrac{\partial^2}{\partial t_2^2}(\lambda_n-\lambda_0)\dd t}\\
%&=:Cn^{-\delta/2}+ I_1+I_2.
%\end{align*}
To estimate $I_5$ in \eqref{eq:e32}, recall that by \eqref{eq:e4}, $|\tfrac{\partial^2}{\partial t_2^2}\lambda_0(\mathbf{t})-(\sigma_2^4t_2^2-\sigma_2^2)\lambda_0(\mathbf{t})|
\le C|t_1||\mathbf{t}|\lambda_0(\mathbf{t})
\le C|t_1|e^{-c|\mathbf{t}|^2}$.
Thus
\begin{align*}
&\Abs{\int_{|t_1|\le\polya,|t_2|>\pi\sqrt n}G_{n,\polya}(t_1)e^{-it_2y_n}[\tfrac{\partial^2}{\partial t_2^2}\lambda_0(\mathbf{t})-(\sigma_2^4t_2^2-\sigma_2^2)\lambda_0(\mathbf{t})]\dd\mathbf{t}}\\
&\le 
C\int_{|t_1|\le\polya,|t_2|>\pi\sqrt n}\frac{1}{|t_1|}|t_1|e^{-c|\mathbf{t}|^2}\dd\mathbf{t}
\le
Ce^{-cn}.
\end{align*}
On the other hand, recalling that $\mathcal N=(\mathcal N_1,\mathcal N_2)$ is the limiting normal distribution, we have $\lambda_0(\mathbf{t})=E[e^{it_1\mathcal N_1+it_2\mathcal N_2}]$. By Fubini's theorem,
\begin{align*}
&\int_{|t_1|\le\polya,|t_2|>\pi\sqrt n}G_{n,\polya}(t_1)e^{-it_2y_n}(\sigma_2^4t_2^2-\sigma_2^2-1)\lambda_0\dd\mathbf{t}\\
&=
E\left[\int_{|t_2|>\pi\sqrt n}e^{it_2(\mathcal N_2-y_n)}(\sigma_2^4t_2^2-\sigma_2^2-1)\dd t_2
\int_{|t_1|\le\polya}\frac{e^{-it_1 (a-\mathcal N_1)}-e^{-it_1(x-\mathcal N_1)}}{(2\pi)^2it_1}\hat v_\polya (t_1)\dd t_1
\right].
\end{align*}
Note that by Fourier's inversion formula (and the fact that $\hat v_\polya $ is supported on $[-\polya,\polya]$), 
\[
f(\mathcal N_1):=\frac{1}{2\pi}\int_{|t_1|\le\polya}\frac{e^{-it_1 (a-\mathcal N_1)}-e^{-it_1(x-\mathcal N_1)}}{it_1}\hat v_\polya (t_1)\dd t_1
=v_\polya (a-\mathcal N_1,x-\mathcal N_1).
\]
Thus $|f|\le 1$. Also note that conditioning on $\mathcal N_1$, the variable $\mathcal N_2$ has a normal distribution with mean $\sigma_{12}\mathcal N_1/\sigma_1^2$ and variance $\sigma_2^2-\tfrac{\sigma_{12}^2}{\sigma_1^2}$. Hence
\begin{align*}
&\Abs{\int_{|t_1|\le\polya,|t_2|>\pi\sqrt n}G_{n,\polya}(t_1)e^{-it_2y_n}(\sigma_2^4t_2^2-\sigma_2^2-1)\lambda_0\dd\mathbf{t}}\\
&= 
\frac{1}{2\pi}\Abs{E\left[\int_{|t_2|>\pi\sqrt n}e^{it_2(\mathcal N_2-y_n)}(\sigma_2^4t_2^2-\sigma_2^2-1)f(\mathcal N_1)\dd t_2\right]}
\\
&=
\frac{1}{2\pi}\Abs{
\int_{|t_2|>\pi\sqrt n}
(\sigma_2^4t_2^2-\sigma_2^2-1)E\left[\exp\left(i(\tfrac{\sigma_{12}}{\sigma_1^2}\mathcal N_1-y_n)t_2-(\sigma_2^2-\tfrac{\sigma_{12}^2}{\sigma_1^2})\tfrac{t_2^2}{2}\right)f(\mathcal N_1)\right]\dd t_2
}\\
&\le 
\int_{|t_2|>\pi\sqrt n}Ce^{-ct_2^2}\dd t_2
\le Ce^{-cn}.
\end{align*}
Therefore,  $I_5\le Ce^{-cn}$.

\item Finally, plugging the bounds $I_5\le Ce^{-cn}$ and $I_6\le Cn^{-\delta/2}$ into \eqref{eq:e32} we obtain
\[
\sup_{x\in\R, y\in\Z}(1+y_n^2)\psi(\infty,y_n)\Abs{\Delta_n^\polya (x)-\Delta_n^\polya (a)}\le Cn^{-\delta/2}.
\]
 Since the right hand side is uniform for all  $a$, we simply have
\[
\sup_{x\in\R, y\in\Z}(1+y_n^2)\psi(\infty,y_n)|\Delta_n^\polya (x)|\le Cn^{-\delta/2}.
\]
This, together with \eqref{eq:e13}, yields
\[
\sup_{x\in\R,y\in\Z}(1+y_n^2)\Abs{\sqrt n F_n(x,y_n)-\psi(x,y_n)}\le Cn^{-\delta/2}.
\]
%\item
%Compared to \eqref{eq:e15}, by \eqref{eq:e14}, \eqref{eq:e20}, \eqref{eq:e21}, we get
%\begin{align*}
%\Abs{\Delta_n^T(x)-\Delta_n^T(a)}&=
%\bigg|(\tfrac{\sqrt n F_n(\infty,y_n)}{\psi(\infty,y_n)}-1)(\bar F_n^T(x)-\bar F_n^T(a))
%+\int_{|t_1|\le\polya,|t_2|>\pi\sqrt n}G_{n,\polya}(t_1)e^{-it_2y_n}\lambda_0\dd t\\
%&\qquad+\int_{|t_1|\le\polya,|t_2|\le \pi\sqrt n}G_{n,\polya}(t_1)e^{-it_2y_n}(\lambda_n-\lambda_0)\dd t\bigg|.
%\end{align*}
%Similar arguments as in the previous step will give us
%\[
%\sup_{x\in\R,y\in\Z}\Abs{\sqrt n F_n(x,y_n)-\psi(x,y_n)}\le Cn^{-\delta/2}.
%\]
\end{enumerate}
Our proof of Theorem~\ref{thm:semi-local-LLT} is complete.
\end{proof}

\section{Proof of the Regenerative CLT rates}\label{sec:regen}
In this section we will use the semi-local Berry Esseen estimates from Theorem \ref{thm:semi-local-LLT} in the previous section to give the proof of our main result (Theorem \ref{th:BEregCLT}). 
To more easily adapt to the i.i.d.\ setting of Theorem \ref{thm:semi-local-LLT}, we first prove the statement of Theorem \ref{th:BEregCLT} under the measure $\bP$ (that is, conditioned on a regeneration at time zero). Then, at the end of the section we show how to obtain the same results taking into account that the process is different prior to the first regeneration time.

\subsection{Proof of Theorem \ref{th:BEregCLT} \texorpdfstring{under the measure $\bP$}{conditioned on a regeneration at time zero}}

%For notational convenience, we will let $\bar{X}_n = X_n - n\mu$. 
In this subsection, our aim is to prove the following Proposition which is the analog of Theorem \ref{th:BEregCLT} under the measure $\bP$. 
\begin{prop}\label{pr:BEregbP}
%Let $\{\xi_i\}_{i\geq 1}$ be a regenerative process 
Let $X_n= \sum_{i=1}^n \xi_i$ be a regenerative process 
with regeneration times $\{\tau_k\}_{k\geq 1}$.
% and let $X_n = \sum_{i=1}^n \xi_i$. 
 Assume for some $\d \in (0,1]$ that 
\[
 \bE[\tau_1^{2+\d}]< \infty \quad \text{and} \quad \bE\left[ \left( \sum_{i=1}^{\tau_1} |\xi_i| \right)^{2+\d} \right] < \infty. 
\]
Then, 
\[
 \limsup_{n\to\infty} n^{\d/2} \sup_{x \in \R} \left| \bP\left( \frac{X_n- \mu n}{\s \sqrt{n}} \leq x  \right) - \Phi(x) \right| < \infty,
\]
where $\mu$ and $\s$ are defined as in \eqref{eq:regLLN} and \eqref{eq:regCLT}, respectively. 
\end{prop}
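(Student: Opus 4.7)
The plan is to follow Bolthausen's scheme from~\cite{bo80}, with the new semi-local estimate of Theorem~\ref{thm:semi-local-LLT} (rather than its $\d=1$ predecessor) serving as the main technical input. Set $\bar\tau := \bE[\tau_1]$ and, under $\bP$, consider the centered i.i.d.\ pairs
\[
(A_i, B_i) := \bigl((X_{\tau_i}-X_{\tau_{i-1}}) - \mu(\tau_i - \tau_{i-1}),\;(\tau_i-\tau_{i-1}) - \bar\tau\bigr),
\]
with partial sums $U_k := X_{\tau_k} - \mu\tau_k$ and $V_k := \tau_k - k\bar\tau$. Minkowski's inequality plus the moment hypotheses give $\bE[|A_1|^{2+\d}] + \bE[|B_1|^{2+\d}] < \infty$, and $B_1$ is lattice, so Theorem~\ref{thm:semi-local-LLT} applies to the pair $(U_k, V_k)$ with rate $k^{-(1+\d)/2}$. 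I will first treat the case where $\Sigma := \Cov(A_1, B_1)$ is non-degenerate; the degenerate case $A_1 = cB_1$ almost surely reduces to a one-dimensional Berry--Esseen estimate for $\tau_{N_n}$ (with $N_n := \max\{k : \tau_k \le n\}$) plus moment control of the final block.

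The central decomposition partitions on $\{\tau_k \le n < \tau_{k+1}\}$ and uses the strong regeneration property at time $\tau_k$ to write
\[
\bP\!\bigl(X_n - \mu n \leq x\s\sqrt n\bigr) = \sum_{k\ge 0} \bE\!\left[\mathbf{1}_{\{\tau_k \leq n\}}\, H\!\bigl(n-\tau_k,\; x\s\sqrt n - U_k\bigr)\right],
\]
where $H(\ell, u) := \bP(\tau_1 > \ell,\; X_\ell - \mu\ell \leq u)$ encodes the final incomplete block containing $n$. The accounting identity $\sum_{\ell\ge 0}\bP(\tau_1 > \ell) = \bar\tau$ is what eventually supplies the correct normalization $\s^2 = \bE[A_1^2]/\bar\tau$. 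Next, I substitute the semi-local approximation from Theorem~\ref{thm:semi-local-LLT} for the joint law of $(U_k, \tau_k)$ and integrate by parts in $a$ against the bounded-variation function $a \mapsto H(n-\tau_k, x\s\sqrt n - a)$; this splits the right-hand side into a Gaussian leading double sum plus an error.

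For the leading term, the substitutions $v = a/\sqrt k$ and $y = (m-k\bar\tau)/\sqrt k$, a Riemann-sum approximation over the effective range $k \in n/\bar\tau \pm O(\sqrt n)$, and the large/small-argument behavior of $H(\ell,\cdot)$ collapse the sum to $\Phi(x)$ (integration of $\gamma_\Sigma(v,y)$ in $y$ recovers the one-dimensional Gaussian density of $A_1$, and the identity $\sum_\ell \bP(\tau_1>\ell) = \bar\tau$ matches the normalization). The error then has four contributions, each of which must be bounded by $O(n^{-\d/2})$: (i) the semi-local LLT error, which after integration by parts picks up a total-variation factor $\bP(\tau_1 > n-m)$, so summing $k^{-(1+\d)/2}(1+y^2)^{-1}$ over the effective $O(\sqrt n)$ range of $k$ and against $\sum_\ell \bP(\tau_1>\ell) = \bar\tau$ yields the desired bound; (ii) large-deviation tail bounds for $|V_k|/\sqrt k$ outside the effective $k$-range, using $\bE[|B_1|^{2+\d}]<\infty$; (iii) the tail $\sum_{\ell>L}\bP(\tau_1>\ell)\le L^{-(1+\d)}\bE[\tau_1^{2+\d}]$ controlling large $\ell = n-\tau_k$; (iv) Riemann-sum approximation errors, absorbed by the uniform continuity of the Gaussian.

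The main obstacle is step (i) of the error analysis when $\d < 1$. In~\cite{bo80} the full third-moment assumption permits a Taylor expansion to third order of the characteristic function and a uniform $k^{-3/2}$ rate without the $(1+y^2)^{-1}$ weight, whereas here the weight from Theorem~\ref{thm:semi-local-LLT} is essential. The integration by parts must be arranged so that this weight is summed against the bounded total mass $\bar\tau$ coming from $\sum_\ell \bP(\tau_1>\ell)$; only then do the $O(\sqrt n)$ effective regeneration indices telescope to the target rate $n^{-\d/2}$ rather than to the naive bound $n^{(1-\d)/2}$ that one would obtain by summing the pointwise LLT error without the weight.
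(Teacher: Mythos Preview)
Your proposal is correct and follows essentially the same route as the paper: decompose on the last regeneration before $n$, apply Theorem~\ref{thm:semi-local-LLT} to the joint law of $(U_k,\tau_k)$, and collapse the resulting Gaussian double sum to $\Phi(x)$ via the identity $\sum_{\ell\ge 0}\bP(\tau_1>\ell)=\bar\tau$ together with a Riemann-sum argument. Your integration-by-parts against $a\mapsto H(\ell,x\s\sqrt n-a)$ is equivalent to the paper's direct conditioning on $X_n-X_{\tau_{k(n)}}$ (both produce the same factor $\bP(\tau_1>m,\,X_m\in du)$), and your error pieces (i)--(iv) map onto the paper's Lemmas~\ref{lem:part1}--\ref{lem:part3} and the truncation estimates \eqref{regerr}--\eqref{Phierr}; your separate treatment of the degenerate covariance case is a detail the paper leaves implicit.
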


\begin{proof}
 For notational convenience, in the proof below we will let $\bar{X}_n = X_n - n\mu$. 
The strategy of the proof of Proposition \ref{pr:BEregbP} will be to condition on the time and value of the regenerative process at the last regeneration time prior to time $n$. To this end, let $k(n) \geq 0$ be the number of regeneration times that have occured by time $n$; that is, 
$\tau_{k(n)} \leq n < \tau_{k(n)+1}$.
%\[
% \tau_{k(n)} \leq n < \tau_{k(n)+1}. 
%\]
By decomposing according to the values of $k(n)$, $n-\tau_{k(n)}$ and $X_n-X_{\tau_{k(n)}}$, we can write 
\begin{align*}
 \bP\left( \frac{\bar{X}_n}{\s\sqrt{n}} \leq x \right) = \sum_{k=0}^n \sum_{m=0}^n \int \bP\left( \frac{\bar{X}_n}{\s\sqrt{n}} \leq x, \, k(n) = k, \, \tau_k = n-m, \, X_n - X_{\tau_k} \in du \right).
\end{align*}
Using the structure provided by the regeneration times, for any fixed $k,m$, and $u$ we can re-write the probability inside the sums and integral on the right as 
\begin{align*}
& \bP\left( \frac{\bar{X}_n}{\s\sqrt{n}} \leq x, \, k(n) = k, \, \tau_k = n-m, \, X_n - X_{\tau_k} \in du \right) \\
%&\qquad = \bP\left( X_n \leq x\s \sqrt{n} + n \mu , \, k(n) = k, \, \tau_k = n-m, \, X_n - X_{\tau_k} \in du \right) \\
&\qquad = \bP\left( X_{\tau_k} - \tau_k \mu \leq x\s \sqrt{n}- u + (n-\tau_k) \mu, \, \tau_k = n-m, \, \tau_{k+1} > n, \, X_n - X_{\tau_k} \in du \right) \\
%&\qquad = \bP\left( X_{\tau_k} - \tau_k \mu \leq x\s \sqrt{n}- u + (n-\tau_k) \mu, \, \tau_k = n-m \right) \bP\left( \tau_1 > m, \, X_m \in du \right) \\
%&\qquad = \bP\left( \frac{X_{\tau_k} - \tau_k\mu }{\a \sqrt{k}}  \leq \frac{x\s \sqrt{n}- u + m \mu}{\a \sqrt{k}} , \, \tau_k = n-m \right) \bP\left( \tau_1 > m, \, X_m \in du \right) \\
&\qquad = \bP\left( \frac{\bar{X}_{\tau_k} }{\sqrt{k}}  \leq \frac{x\s \sqrt{n}- u + m \mu}{\sqrt{k}} , \, \tau_k = n-m \right) \bP\left( \tau_1 > m, \, X_m \in du \right), 
\end{align*}
and therefore, 
\begin{align*}
 \bP\left( \frac{\bar{X}_n}{\s\sqrt{n}} \leq x \right)
&= \sum_{k=1}^n \sum_{m=0}^{\fl{\sqrt{n}}} \int_{-\sqrt{n}}^{\sqrt{n}} \bP\left( \frac{\bar{X}_{\tau_k}}{\sqrt{k}} \leq  \frac{x \s \sqrt{n} - u+m \mu}{\sqrt{k}} , \, \tau_k = n-m\right) \bP\left( \tau_1 > m, \, X_m\in du \right) \\
&\qquad + \bP\left( \{\bar{X}_n \leq x \s \sqrt{n} \} \cap \left\{ n-\tau_{k(n)} > \sqrt{n}, \text{ or } |X_n-X_{\tau_{k(n)}}| > \sqrt{n} \right\} \right).
\end{align*}
Note that in the above we could have included the terms $m > \sqrt{n}$ and $|u|> \sqrt{n}$ in the first term on the right and omitted the second term. However, the main contribution will come from $m,|u| \leq \sqrt{n}$ and thus to simplify later parts of the proof we choose to handle the cases where $n-\tau_{k(n)} > \sqrt{n}$ or $|X_n - X_{\tau_{k(n)}}|>\sqrt{n}$ separately. 
Note also that we have ommited $k=0$ from the first sum since this is included in the last term since $\tau_0 = 0$.

To use this decomposition to compare with $\Phi(x)$, note first of all that letting $\bar\tau = \bE[\tau_1]$ we can write 
\begin{align*}
\Phi(x) &= 
\frac{\Phi(x)}{\bar\tau} \sum_{m=0}^\infty \bP(\tau_1 > m) = 
\frac{\Phi(x)}{\bar\tau}\sum_{m=0}^\infty \int \bP(\tau_1 > m, \, X_m \in dy ) \\
&= \frac{\Phi(x)}{\bar\tau}\sum_{m=0}^{\fl{\sqrt{n}}} \int_{-\sqrt{n}}^{\sqrt{n}} \bP(\tau_1 > m, \, X_m \in dy )
+ \frac{\Phi(x)}{\bar\tau}\sum_{m=0}^{\fl{\sqrt{n}}} \bP(\tau_1 > m, \, |X_m| > \sqrt{n} ) \\
&\qquad + \frac{\Phi(x)}{\bar\tau} \sum_{m>\sqrt{n}} \bP(\tau_1 > m). 
\end{align*}
Therefore, we can conclude that 
\begin{align}
&  \bP\left( \frac{\bar{X}_n}{\s\sqrt{n}} \leq x \right) - \Phi(x)  \nonumber \\
&\quad = \sum_{m=0}^{\fl{\sqrt{n}}} \int_{-\sqrt{n}}^{\sqrt{n}} \left\{ \sum_{k=1}^n \bP\left( \tfrac{\bar{X}_{\tau_k}}{\sqrt{k}} \leq  \tfrac{x \s \sqrt{n} - y+m \mu}{\sqrt{k}} , \, \tau_k = n-m\right) - \frac{\Phi(x)}{\bar\tau}  \right\} \bP\left( \tau_1 > m, \, X_m\in dy \right)  \label{mainterm} \\
&\quad\qquad + \bP\left( \{\bar{X}_n \leq x \s \sqrt{n} z\} \cap \left\{ n-\tau_{k(n)} > \sqrt{n}, \text{ or } |X_n-X_{\tau_{k(n)}}| > \sqrt{n} \right\} \right) \label{regerr} \\
&\quad\qquad - \frac{\Phi(x)}{\bar\tau}\sum_{m=0}^{\fl{\sqrt{n}}} \bP(\tau_1 > m, \, |X_m| > \sqrt{n} ) - \frac{\Phi(x)}{\bar\tau} \sum_{m>\sqrt{n}} \bP(\tau_1 > m). \label{Phierr}
%\\
%&\leq \left| \sum_{m=0}^{\fl{\sqrt{n}}} \int_{-\sqrt{n}}^{\sqrt{n}} \left\{ \sum_{k=0}^n \bP\left( \frac{\bar{X}_{\tau_k}}{\a\sqrt{k}} \leq  \frac{x \s \sqrt{n} - y+m \mu}{\a \sqrt{k}} , \, \tau_k = n-m\right) - \frac{1}{\bE[\tau_1]} \Phi(x) \right\} \bP\left( \tau_1 > m, \, X_m = y \right) \right| \\
%&\qquad + \bP\left( n-\tau_{k(n)} > \sqrt{n} \right) + \bP\left( |X_n-X_{\tau_{k(n)}}| > \sqrt{n} \right) \\
%&\qquad + \frac{\sqrt{n}}{\bE[\tau_1]} \bP\left( \sum_{i=1}^{\tau_1} |\xi_i| > \sqrt{n} \right) + \frac{1}{\bE[\tau_1]} \sum_{m>\sqrt{n}} \bP(\tau_1 > m) \\
%&\leq \left| \sum_{m=0}^{\fl{\sqrt{n}}} \int_{-\sqrt{n}}^{\sqrt{n}} \left\{ \sum_{k=0}^n \bP\left( \frac{\bar{X}_{\tau_k}}{\a\sqrt{k}} \leq  \frac{x \s \sqrt{n} - y+m \mu}{\a \sqrt{k}} , \, \tau_k = n-m\right) - \frac{1}{\bE[\tau_1]} \Phi(x) \right\} \bP\left( \tau_1 > m, \, X_m = y \right) \right| \\
%&\qquad + n \bP\left( \tau_1 > \sqrt{n} \right) + \frac{1}{\bE[\tau_1]} \sum_{m>\sqrt{n}} \bP(\tau_1 > m) + \left(n +\frac{\sqrt{n}}{\bE[\tau_1]}\right) \bP\left( \sum_{i=1}^{\tau_1} |\xi_i| > \sqrt{n} \right).
\end{align}
To control the terms in \eqref{regerr}, note that the moment assumptions in the statement of the theorem imply that 
\begin{align*}
\eqref{regerr} 
%&\leq \bP( \tau_k - \tau_{k-1} > \sqrt{n} \text{ for some } k \leq n ) + \bP\left( \max_{i \in [\tau_{k-1},\tau_k)} |X_i - X_{\tau_k}| > \sqrt{n} \text{ for some } k \leq n \right) \\
&\leq n \bP(\tau_1 > \sqrt{n}) + n \bP\left( \sum_{i=1}^{\tau_1} |\xi_i| > \sqrt{n} \right) %\\
%&\leq n^{-\d/2} \left( \bE[\tau_1^{2+\d}] + \bE\left[ \left( \sum_{i=1}^{\tau_1} |\xi_i| \right)^{2+\d} \right] \right). 
= \bigo( n^{-\d/2} ).
\end{align*}
Similarly, the terms in \eqref{Phierr} can be bounded by 
\begin{align*}
%& \frac{1}{\E[\tau_1]}\sum_{m=0}^{\fl{\sqrt{n}}} \bP(\tau_1 > m, \, |X_m| > \sqrt{n} ) + \frac{1}{\E[\tau_1]} \sum_{m>\sqrt{n}} \bP(\tau_1 > m) \\
\eqref{Phierr} &\leq \frac{1+\sqrt{n}}{\bar\tau}  \bP\left( \sum_{i=1}^{\tau_1} |\xi_i| > \sqrt{n} \right) + \frac{\bE[\tau_1^{2+\d}]}{\bar\tau} \sum_{m>\sqrt{n}} (m+1)^{-2-\d} %\\
%&\leq \frac{\bE\left[ \left( \sum_{i=1}^{\tau_1} |\xi_i| \right)^{2+\d} \right]}{\bE[\tau_1]} \frac{1+\sqrt{n}}{n^{1+\d/2}} + \frac{\bE[\tau_1^{2+\d}]}{\bE[\tau_1]} \frac{1}{(1+\d) n^{(1+\d)/2}}
= \bigo( n^{-(1+\d)/2} ). 
\end{align*}
Therefore, it remains only to show that the term in \eqref{mainterm} is also $\bigo(n^{-\d/2})$, uniformly in $x$. 
To this end, let $\psi_A(x,y)$ be defined as in \eqref{psidef}, where 
\[
 A = 
\begin{pmatrix}
 \bE[(X_{\tau_1} - \tau_1 \mu)^2 ] & \bE[(X_{\tau_1} - \tau_1 \mu)(\tau_1 - \bar\tau) ]  \\
 \bE[(X_{\tau_1} - \tau_1 \mu)(\tau_1 - \bar\tau) ] & \bE[ (\tau_1 - \bar\tau)^2 ] \\
\end{pmatrix}
\]
is the covariance matrix of $(X_{\tau_1}-\tau_1 \mu, \tau_1)$ under the measure $\bP$. 
For convenience of notation, let 
\begin{equation}\label{alphadef}
 \a^2 = \bE\left[(X_{\tau_1} - \tau_1 \mu)^2 \right]
\end{equation}
be the top left entry of the covariance matrix $A$. If $\mathcal{N} = (\mathcal{N}_1,\mathcal{N}_2)$ is a centered Gaussian with covariance matrix $A$, then it follows that $\frac{\mathcal{N}_1}{\a}$ is a standard Normal random variable and thus
%Note that if $\mathcal{N} = (\mathcal{N}_1,\mathcal{N}_2)$ is a centered Gaussian with covariance matrix $A$, then since $\a^2 = \bE[(X_{\tau_1} - \tau_1 \mu)^2 ]$ it follows that 
\[
 \int_\R \psi_A(\a x, y) \, dy = P( \mathcal{N}_1 \leq \a x ) = \Phi(x). 
\]
Using this notation, the necessary bounds on \eqref{mainterm} which complete the proof of Proposition \ref{pr:BEregbP} are obtained by a series of approximations given by the following three lemmas. Note that in these lemmas and below we will use the following notation.
\begin{equation}\label{ykdef}
 y_{k,n,m} = \frac{n-m-k\bar\tau}{\sqrt{k}}. 
\end{equation}

\begin{lem}\label{lem:part1}
There exists a constant $C<\infty$ such that for $n$ large enough, 
% \begin{align*}
% & \sum_{m=0}^{\fl{\sqrt{n}}} \int_{-\sqrt{n}}^{\sqrt{n}} \sum_{k=1}^n  \biggl| \bP\left( \tfrac{\bar{X}_{\tau_k}}{\sqrt{k}} \leq  \tfrac{x \s \sqrt{n} - u+m \mu}{\sqrt{k}} , \, \tau_k = n-m\right) 
% \\
%&\hspace{1.5in} - \frac{1}{\sqrt{k}} \psi_A\left( \tfrac{x \s \sqrt{n}-u+m \mu}{\sqrt{k}}, \tfrac{n-m-k\bar\tau}{\sqrt{k}} \right) \biggr| \bP\left( \tau_1 > m, \, X_m\in du \right)
%\leq \frac{C}{n^{\d/2}},
% \end{align*}
 \begin{align*}
 & \sum_{m=0}^{\fl{\sqrt{n}}} \int_{-\sqrt{n}}^{\sqrt{n}} \sum_{k=1}^n  \biggl| \bP\left( \tfrac{\bar{X}_{\tau_k}}{\sqrt{k}} \leq  \tfrac{x \s \sqrt{n} - u+m \mu}{\sqrt{k}} , \, \tau_k = n-m\right) 
 \\
&\hspace{1.5in} - \frac{1}{\sqrt{k}} \psi_A\left( \tfrac{x \s \sqrt{n}-u+m \mu}{\sqrt{k}}, y_{k,n,m} \right) \biggr| \bP\left( \tau_1 > m, \, X_m\in du \right)
\leq \frac{C}{n^{\d/2}},
 \end{align*}
for all $x \in \R$.
\end{lem}

\begin{lem}\label{lem:part2}
There exists a constant $C<\infty$ such that for $n$ large enough, 
%\begin{align*}
%& \sum_{m=0}^{\fl{\sqrt{n}}} \sum_{k=1}^n \Biggl| \int_{-\sqrt{n}}^{\sqrt{n}} \frac{1}{\sqrt{k}} \psi_A\left(\tfrac{ x\s\sqrt{n}-u+m\mu}{\sqrt{k}}, \tfrac{n-m-k\bar\tau}{\sqrt{k}} \right) \bP\left( \tau_1 > m, X_m\in du \right) \\
%&\hspace{1.5in}  - \frac{1}{\sqrt{k}} \psi_A\left(\a x , \tfrac{n-m-k\bar\tau}{\sqrt{k}} \right) \bP\left( \tau_1 > m, |X_m|\leq \sqrt{n} \right) \Biggr| 
%\leq \frac{C}{\sqrt{n}},
%\end{align*}
\begin{align*}
& \sum_{m=0}^{\fl{\sqrt{n}}} \sum_{k=1}^n \Biggl| \int_{-\sqrt{n}}^{\sqrt{n}} \frac{1}{\sqrt{k}} \psi_A\left(\tfrac{ x\s\sqrt{n}-u+m\mu}{\sqrt{k}}, y_{k,n,m} \right) \bP\left( \tau_1 > m, X_m\in du \right) \\
&\hspace{1.5in}  - \frac{1}{\sqrt{k}} \psi_A\left(\a x , y_{k,n,m} \right) \bP\left( \tau_1 > m, |X_m|\leq \sqrt{n} \right) \Biggr| 
\leq \frac{C}{\sqrt{n}},
\end{align*}
for all $x \in \R$. 
\end{lem}

\begin{lem}\label{lem:part3}
There exists a constant $C<\infty$ such that for $n$ large enough, 
% \[
%  \sum_{m=0}^{\fl{\sqrt{n}}} \left| \sum_{k=1}^n \frac{1}{\sqrt{k}} \psi_A\left(\a x, \tfrac{n-m-k\bar\tau}{\sqrt{k}} \right)  - \frac{1}{\bar\tau} \int_\R \psi_A(\a x,y) \, dy \right| \bP\left( \tau_1 > m, |X_m|\leq \sqrt{n} \right) \leq \frac{C}{\sqrt{n}},
% \]
 \[
  \sum_{m=0}^{\fl{\sqrt{n}}} \left| \sum_{k=1}^n \frac{1}{\sqrt{k}} \psi_A\left(\a x, y_{k,n,m} \right)  - \frac{1}{\bar\tau} \int_\R \psi_A(\a x,y) \, dy \right| \bP\left( \tau_1 > m, |X_m|\leq \sqrt{n} \right) \leq \frac{C}{\sqrt{n}},
 \]
for all $x \in \R$. 
\end{lem}

%{\color{blue}
%Finally, note that if $\mathcal{N} = (\mathcal{N}_1,\mathcal{N}_2) \sim N(\vec{0},A)$ then $\int_\R \psi_A(\a x,y) \, dy = P(\mathcal{N}_1 \leq \a x) = \Phi(x)$
%\begin{align*}
% \sum_{m=0}^{\fl{\sqrt{n}}} \frac{1}{\bE[\tau_1]} \left( \int_\R \psi_A(\a x,y) \, dy \right)  \bP\left( \tau_1 > m, |X_m|\leq \sqrt{n}\right)
%=  \sum_{m=0}^{\fl{\sqrt{n}}} \frac{\Phi(x)}{\bE[\tau_1]} \bP\left( \tau_1 > m, |X_m|\leq \sqrt{n} \right)
%\end{align*}
%}

%%%%%%%%%%%%%%%%%%%%%%%%%%%%%%%%%%%%%%%%%%%%%%%%%%%%%%%%%%
%%% PROOF OF PART 1 %%%%%%%%%%%%%%%%%%%%%%%%%%%%%%%%%%%%%%
%%%%%%%%%%%%%%%%%%%%%%%%%%%%%%%%%%%%%%%%%%%%%%%%%%%%%%%%%%

\begin{proof}[Proof of Lemma \ref{lem:part1}]
It follows from Theorem \ref{thm:semi-local-LLT} that the sum in the statement of the Lemma is bounded by 
\begin{equation}\label{applysemilocal}
 \sum_{m=0}^{\fl{\sqrt{n}}} \int_{-\sqrt{n}}^{\sqrt{n}} \sum_{k=1}^n 
\frac{C}{k^{(1+\d)/2}} \left(1+\frac{(n-m-k\bar\tau)^2}{k} \right)^{-1} \bP(\tau_1 > m, \, X_m \in du ).   
\end{equation}
(A direct application of Theorem \ref{thm:semi-local-LLT} requires that the random variable $\tau_1$ has span 1 under the law $\bP$, but clearly Theorem \ref{thm:semi-local-LLT} can be generalized to any lattice random variable $\bvar$.)
Note that 
for $m\leq \sqrt{n}$ we can bound
%there exists a constant $C'>0$ such that 
\begin{align*}
 \frac{1}{k^{(1+\d)/2}} \left(1+\frac{(n-m-k\bar\tau)^2}{k} \right)^{-1}
&\leq
\begin{cases}
%% \frac{4 k^{(1-\d)/2}}{n^2} & \text{if } 1 \leq k < \frac{n-2 \sqrt{n}}{\bar\tau} \\  
 \frac{k^{(1-\d)/2}}{(n-\sqrt{n}-k\bar\tau)^2} & \text{if } 1 \leq k < \frac{n-2 \sqrt{n}}{\bar\tau} \\
 \frac{1}{k^{(1+\d)/2}} & \text{if } |n-k\bar\tau| \leq 2\sqrt{n} \\
 \frac{k^{(1-\d)/2}}{(n-k\bar\tau)^2} & \text{if } \frac{n+2 \sqrt{n}}{\bar\tau} < k \leq n,
\end{cases}
\end{align*}
and from this it follows easily (using integrals to bound the appropriate sums) that 
\[
 \sum_{k=1}^n \frac{1}{k^{(1+\d)/2}} \left(1+\frac{(n-m-k\bar\tau)^2}{k} \right)^{-1} \leq  \frac{C}{n^{\d/2}}, 
\]
for some $C<\infty$. 
Therefore, we obtain that 
\begin{align*}
\eqref{applysemilocal}
&\leq \sum_{m=0}^{\fl{\sqrt{n}}} \int_{-\sqrt{n}}^{\sqrt{n}} \frac{C}{n^{\d/2}} \bP(\tau_1 > m, \, X_m \in dy ) 
\leq \frac{C}{n^{\d/2}} \sum_{m=0}^{\fl{\sqrt{n}}} \bP(\tau_1 > m ) 
\leq \frac{C \bE[\tau_1]}{n^{\d/2}}.
\end{align*}
\end{proof}

Before giving the proofs of Lemmas \ref{lem:part2} and \ref{lem:part3}, we first state the following facts which were used in the proofs of the corresponding statements in \cite{bo80}. 
\begin{lem}
 Let $y_{k,n,m} = \frac{n-m-k\bar\tau}{\sqrt{k}}$. For any constant $c>0$, there exists a constant $C<\infty$ depending only on $c$ and $\bar\tau$ such that 
\begin{equation}\label{sumey2a}
 \sup_{m\leq \sqrt{n}} \sum_{k=1}^n \frac{1}{k} e^{-c y_{k,n,m}^2} \leq \frac{C}{\sqrt{n}}
\end{equation}
and 
\begin{equation}\label{sumey2b}
 \sup_{m\leq \sqrt{n}} \sum_{k=1}^n \frac{1}{\sqrt{k}}\left| \sqrt{\frac{n}{k \bar\tau}} - 1 \right| e^{-c y_{k,n,m}^2} \leq \frac{C}{\sqrt{n}}.
\end{equation}
\end{lem}

\begin{rem}
 We refer the reader to pages 69-70 in \cite{bo80} for the proofs of \eqref{sumey2a} and \eqref{sumey2b}. 
\end{rem}

\begin{comment}
\begin{proof}
 The proof of these upper bounds can be found on pages 69-70 in \cite{bo80}. However, for completeness and since the proofs are short we give a brief explanation of how the bounds are obtained. For any $n\geq 1$ and $j \in \Z$ let 
\[
 \mathcal{J}_j^n = \{k \in [1,n] \cap \Z: \, n+j\sqrt{n} < k \bar\tau \leq n + (j+1)\sqrt{n} \}.
\]
It is not hard to show that 
\[
 \inf_{k \in \mathcal{J}_j^n} |y_{k,n,m}| 
%\geq 
%\begin{cases}
% j  & \text{if } j\geq 1 \\
% -(j+2)  & \text{if } k\leq -2 \\
% 0  & \text{otherwise,}
%\end{cases}
\geq |j|-2, 
\]
and that uniformly in $n\geq 1$, $m\leq \sqrt{n}$, 
\[
 \sup_{k \in \mathcal{J}_j^n} \frac{1}{k} 
\leq 
\begin{cases}
 \frac{2\bar\tau}{n} & j \geq -\sqrt{n}/2 \\
 1 & j < -\sqrt{n}/2, 
\end{cases}
\quad\text{and}\quad
 \sup_{k \in \mathcal{J}_j^n} \frac{1}{\sqrt{k}} \left| \sqrt{\frac{n}{k \bar\tau}} - 1 \right| 
\leq 
\begin{cases}
 \frac{(|j|+1) \sqrt{2 \bar\tau}}{n} & \text{if } j \geq -\sqrt{n}/2 \\
 \sqrt{n} & \text{if } j < -\sqrt{n}/2.
\end{cases}
\]
From these bounds, the claimed inequalities follow easily. 
%\[
%\sup_{k \in \mathcal{J}_j^n} \frac{1}{k} 
%%%\leq 
%%%\begin{cases}
%%% \frac{\bar\tau}{n} & j \geq 0 \\
%%% \frac{2\bar\tau}{n} & -\sqrt{n}/2 \leq j < 0 \\
%%% 1 & j < -\sqrt{n}/2. 
%%%\end{cases}
%\leq 
%\begin{cases}
% \frac{2\bar\tau}{n} & j \geq -\sqrt{n}/2 \\
% 1 & j < -\sqrt{n}/2. 
%\end{cases}
%\]
%and
%\[
% \sup_{k \in \mathcal{J}_j^n} \frac{1}{\sqrt{k}} \left| \sqrt{\frac{n}{k \bar\tau}} - 1 \right| 
%%%\leq 
%%%\begin{cases}
%%% \frac{(j+1)\sqrt{\bar\tau}}{2 n} & \text{if } j \geq 0 \\
%%% \frac{|j| \sqrt{2 \bar\tau} }{n} & \text{if } -\sqrt{n}/2 \leq j < 0 \\
%%% \sqrt{n} & \text{if } j< -\sqrt{n}/2.
%%%\end{cases}
%\leq 
%\begin{cases}
% \frac{(|j|+1) \sqrt{2 \bar\tau}}{n} & \text{if } j \geq -\sqrt{n}/2 \\
% \sqrt{n} & \text{if } j < -\sqrt{n}/2
%\end{cases}
%\]
\end{proof}
\end{comment}

%%%%%%%%%%%%%%%%%%%%%%%%%%%%%%%%%%%%%%%%%%%%%%%%%%%%%%%%%%
%%% PROOF OF PART 2 %%%%%%%%%%%%%%%%%%%%%%%%%%%%%%%%%%%%%%
%%%%%%%%%%%%%%%%%%%%%%%%%%%%%%%%%%%%%%%%%%%%%%%%%%%%%%%%%%

\begin{proof}[Proof of Lemma \ref{lem:part2}]
Let $I(k,m,x,y)$ denote the interval between $\a x$ and $\frac{ x\s\sqrt{n}-y+m\mu}{\sqrt{k}}$, and recall that $\gamma_A(x,y)$ is the p.d.f.\ of a centered two dimensional Gaussian with covariance matrix $A$. 
Then, 
\begin{align*}
& \left|  \psi_A\left(\tfrac{ x\s\sqrt{n}-y+m\mu}{\sqrt{k}}, y_{k,n,m} \right) -  \psi_A\left( \a x, y_{k,n,m} \right) \right| \\
%&\qquad = \left| \int_{\a x}^{\frac{ x\s\sqrt{n}-y+m\mu}{\sqrt{k}}} \gamma_A\left( z, y_{k,n,m} \right) \, dz \right| \\
&\qquad = \left| \int_{I(k,m,x,y)} \gamma_A\left( z, y_{k,n,m} \right) \, dz \right| \\
%&\qquad \leq \left| \frac{ x\s\sqrt{n}-y+m\mu}{\sqrt{k}} - \a x \right| \sup\left\{ \gamma_A\left( z, y_{k,n,m} \right) \, : \, z \in [\a x, \frac{ x\s\sqrt{n}-y+m\mu}{\sqrt{k}}] \right\} \\
%&\qquad \leq \left| \frac{ x\s\sqrt{n}-y+m\mu}{\sqrt{k}} - \a x \right| \sup_{z \in I(k,m,x,y)} \gamma_A\left( z, y_{k,n,m} \right) \\
&\qquad \leq \left( |x|\left| \frac{\s \sqrt{n}}{\sqrt{k}} - \a \right| + \frac{|y-m \mu|}{\sqrt{k}} \right) \sup_{z \in I(k,m,x,y)} \gamma_A\left( z, y_{k,n,m} \right)
\end{align*}
Next, note that there exist constants $c_1,c_2>0$ depending only on the entries of the covariance matrix $A$ such that 
\begin{equation}\label{phibounds}
 \gamma_A(x,y) \leq c_1 e^{-c_2(x^2+y^2)}. 
\end{equation}
Therefore, 
\begin{align}
& \text{(Left side of Lemma \ref{lem:part2}) } \nonumber \\
&\leq 
\sum_{m=0}^{\sqrt{n}} \sum_{k=1}^n \frac{c_1}{k}  e^{-c_2 y_{k,n,m}^2} \int_{-\sqrt{n}}^{\sqrt{n}} |y-m\mu|  \bP(\tau_1>m, \, X_m \in dy) \label{p2dec1} \\
& \quad + \sum_{m=0}^{\sqrt{n}} \sum_{k=1}^n \frac{c_1|x|}{\sqrt{k}}\left| \frac{\s \sqrt{n}}{\sqrt{k}} - \a \right|   e^{-c_2 y_{k,n,m}^2 } \int_{-\sqrt{n}}^{\sqrt{n}}  \sup_{z \in I(k,m,x,y)} e^{-c_2 z^2} \,  \bP(\tau_1>m, \, X_m \in dy) \label{p2dec2}
\end{align}

To control \eqref{p2dec1}, note that the integral inside the sums is zero if $m=0$ whereas for $m\geq 1$ we have  
\begin{align*}
 \int_{-\sqrt{n}}^{\sqrt{n}} |y-m\mu|  \bP(\tau_1>m, \, X_m \in dy) 
&= \bE\left[ |X_m - m\mu| \ind{\tau_1>m} \right] \\
&\leq \bE\left[ \sum_{i=1}^{\tau_1} |\xi_i - \mu| \ind{\tau_1 > m} \right] 
\leq C \bP(\tau_1 > m)^{\frac{1+\d}{2+\d}} 
\leq \frac{C' }{m^{1+\d}}, 
\end{align*}
using the moment assumptions in the statement of Proposition \ref{pr:BEregbP} together with H\"older's inequality and Chebychev's inequality in the last two inequalities, respectively. 
\begin{comment}
\begin{align*}
 \int_{-\sqrt{n}}^{\sqrt{n}} |y-m\mu|  \bP(\tau_1>m, \, X_m \in dy) 
&= \bE\left[ |X_m - m\mu| \ind{\tau_1>m} \right] \\
&\leq \bE\left[ \sum_{i=1}^{\tau_1} |\xi_i - \mu| \ind{\tau_1 > m} \right] \\
&\leq \bE\left[ \left( \sum_{i=1}^{\tau_1} |\xi_i - \mu| \right)^{2+\d} \right]^{\frac{1}{2+\d}} \bP(\tau_1 > m)^{\frac{1+\d}{2+\d}} \\
&\leq \bE\left[ \left( \sum_{i=1}^{\tau_1} |\xi_i - \mu| \right)^{2+\d} \right]^{\frac{1}{2+\d}} \frac{\bE[ \tau_1^{2+\d} ]^{\frac{1+\d}{2+\d}} }{m^{1+\d}} \\
&\leq \frac{1}{m^{1+\d}} \left\{ \bE\left[ \left( \sum_{i=1}^{\tau_1} |\xi_i | \right)^{2+\d} \right]^{\frac{1}{2+\d}} + \mu \bE\left[ \tau_1^{2+\d} \right]^{\frac{1}{2+\d}} \right \} \bE[ \tau_1^{2+\d} ]^{\frac{1+\d}{2+\d}} \\
&= \frac{1}{m^{1+\d}} \left\{ \left\| \sum_{i=1}^{\tau_1} |\xi_i | \right\|_{2+\d} + \mu \left\| \tau_1 \right\|_{2+\d} \right\} \left\| \tau_1 \right\|_{2+\d}^{1+\d} 
\end{align*}
\end{comment}
From this and \eqref{sumey2a}, we obtain that  %\eqref{p2dec1} is bounded above by 
\begin{align*}
\eqref{p2dec1} \leq \sum_{m=1}^{\sqrt{n}} \frac{C}{m^{1+\d}} \sum_{k=1}^n \frac{1}{k}  e^{-c_2 y_{k,n,m}^2 } 
\leq \frac{C'}{\sqrt{n}} \sum_{m=1}^{\sqrt{n}} \frac{1}{ m^{1+\d}} = \frac{C''}{\sqrt{n}}. 
\end{align*}

To control \eqref{p2dec2}, we claim that 
\begin{equation}\label{supez}
 \sup_{z \in I(k,m,x,y)} |x| e^{-c_2 z^2} \leq C. 
\end{equation}
To see this, first note that since $m,|y|\leq \sqrt{n}$ and $k\leq \sqrt{n}$ it follows that 
\[
 \left| \frac{ x\s\sqrt{n}-y+m\mu}{\sqrt{k}} \right| 
\geq \frac{|x| \s \sqrt{n} - (1+\mu)\sqrt{n}}{\sqrt{k}} \geq |x|\s - (1+\mu).  
\]
If $|x| >  \frac{2(1+\mu)}{\s}$ then the right side can be bounded below by $|x|\s/2$ and thus $|z| > \min\{\a, \s/2\} |x|$ for all $z \in I(k,m,x,y)$. 
Therefore, 
\[
 \sup_{z \in I(k,m,x,y)} e^{-c_2 z^2}
\leq 
\begin{cases}
 1 & \text{if } |x| \leq \frac{2(1+\mu)}{\s} \\
e^{-c_2 \min\{ \a, \s/2\} |x|^2 } & \text{if } |x| > \frac{2(1+\mu)}{\s},
\end{cases}
\]
and from this the claim in \eqref{supez} follows. 
Using \eqref{supez} and then \eqref{sumey2b} we then have that 
\begin{align*}
 \eqref{p2dec2}
\leq C \sum_{m=0}^{\sqrt{n}} \left( \sum_{k=1}^n \frac{1}{\sqrt{k}}\left| \frac{\s \sqrt{n}}{\sqrt{k}} - \a \right|   e^{-c_2 y_{k,n,m}^2 } \right) \bP(\tau_1>m )
\leq  \frac{C'}{\sqrt{n}} \sum_{m=0}^{\sqrt{n}} \bP(\tau_1>m ) \leq \frac{C' \bar\tau}{\sqrt{n}}. 
\end{align*}
(Note that in the application of \eqref{sumey2b} we are using that 
%$\s/\a = 1/\sqrt{\bar\tau}$ or 
$\s^2 \bar\tau = \a^2$
which follows from the definitions of $\s^2$ and $\a^2$ in \eqref{eq:regCLT} and \eqref{alphadef}, respectively.)
\end{proof}

%%%%%%%%%%%%%%%%%%%%%%%%%%%%%%%%%%%%%%%%%%%%%%%%%%%%%%%%%%
%%% PROOF OF PART 3 %%%%%%%%%%%%%%%%%%%%%%%%%%%%%%%%%%%%%%
%%%%%%%%%%%%%%%%%%%%%%%%%%%%%%%%%%%%%%%%%%%%%%%%%%%%%%%%%%

\begin{proof}[Proof of Lemma \ref{lem:part3}]
In the proof of this Lemma, to make the notation less burdensome, in a slight abuse of notation we will write $y_k$ for $y_{k,n,m}$ as defined in \eqref{ykdef}. 
%let 
%\[
% y_k = y_k(n,m) = \frac{n-m-k\bar\tau}{\sqrt{k}},
%\]
To begin, note for any fixed $n\geq m$ that $y_1 > y_2 >\cdots > y_n$.
%$n-m-\bar\tau = y_1 > y_2 >\cdots > y_n = -\frac{m}{\sqrt{n}} - (\bar\tau-1)\sqrt{n}$. 
Since for $n$ large enough and $m\leq \sqrt{n}$ we have $y_1 = n-m-\bar\tau \geq n/2$, if $\mathcal{N} = (\mathcal{N}_1,\mathcal{N}_2)$ is a centered Gaussian random variable with Covariance matrix $A$, then 
\begin{align*}
 \sum_{m=0}^{\fl{\sqrt{n}}} \frac{1}{\bar\tau} \left( \int_{y_1}^\infty \psi_A(\a x, y) \, dy \right) \bP(\tau_1 > m, |X_m|\leq \sqrt{n}) 
&\leq \sum_{m=0}^{\fl{\sqrt{n}}} \frac{1}{\bar\tau} P(\mathcal{N}_2 \geq n/2) \bP(\tau_1 > m) \\
&\leq P(\mathcal{N}_2 \geq n/2) = o(n^{-1/2}). 
\end{align*}
%\[
% \int_{y_1}^\infty \psi_A(\a x, y) \, dy = P(Z_1\leq \a x, \, Z_2 \geq y_1) \leq P(Z_2 \geq n/2) = o(n^{-1/2}). 
%\]
Similarly, since $y_n \leq -(\bar\tau-1) \sqrt{n}$ and $\bar\tau = \bE[\tau_1] > 1$ (otherwise the regenerative process is simply an i.i.d.\ sequence), then 
\begin{align*}
\sum_{m=0}^{\fl{\sqrt{n}}} \frac{1}{\bar\tau} \left( \int_{-\infty}^{y_n} \psi_A(\a x, y) \, dy \right) \bP(\tau_1 > m, |X_m|\leq \sqrt{n}) 
%\\
%&\leq \sum_{m=0}^{\fl{\sqrt{n}}} \frac{1}{\bar\tau} P(\mathcal{N}_2 \leq -(\bar\tau-1)\sqrt{n} ) \bP(\tau_1 > m) 
\leq P(\mathcal{N}_2 \leq -(\bar\tau-1)\sqrt{n} ) = o(n^{-1/2}). 
\end{align*}
%\[
% \int_{-\infty}^{y_n} \psi_A(\a x, y) \, dy \leq P\left( Z_2 \leq -(\bar\tau-1) \sqrt{n} \right) = o(n^{-1/2}). 
%\]
Therefore, to finish the proof of Lemma \ref{lem:part3} it is enough to show that 
\begin{equation}\label{intparts}
 \sum_{m=0}^{\fl{\sqrt{n}}} \sum_{k=1}^{n-1} \left| \frac{1}{\sqrt{k}} \psi_A(\a x, y_k) - \frac{1}{\bar\tau} \int_{y_{k+1}}^{y_k} \psi_A(\a x,y) \, dy \right| \bP(\tau_1 > m, |X_m|\leq \sqrt{n}) = \bigo(n^{-1/2}). 
\end{equation}

To prove \eqref{intparts}, first note that 
\begin{align}
& \left| \frac{1}{\sqrt{k}} \psi_A(\a x, y_k) - \frac{1}{\bar\tau} \int_{y_{k+1}}^{y_k} \psi_A(\a x,y) \, dy \right| \nonumber \\
&\quad \leq \left| \frac{1}{\sqrt{k}} - \frac{y_k-y_{k+1}}{\bar\tau} \right|\psi_A(\a x, y_k) 
 + \frac{1}{\bar\tau}  \int_{y_{k+1}}^{y_k} \left| \psi_A(\a x,y) - \psi_A(\a x,y_k) \right| \, dy. \label{psicompare}
\end{align}
To control the first term in \eqref{psicompare}, the definition of $y_k$ implies that 
$y_k = \frac{\bar\tau}{\sqrt{k}} + y_{k+1} \sqrt{\frac{k+1}{k}}$,
%\[
% y_k
%%=  \frac{\bar\tau}{\sqrt{k}} + \frac{n-m-(k+1)\bar\tau}{\sqrt{k}} 
%%= \frac{\bar\tau}{\sqrt{k}} + \frac{n-m-(k+1)\bar\tau}{\sqrt{k+1}} \sqrt{\frac{k+1}{k}}
%= \frac{\bar\tau}{\sqrt{k}} + y_{k+1} \sqrt{\frac{k+1}{k}}
%\]
or equivalently, 
\begin{equation}\label{ykdiff}
 y_k- y_{k+1} = \frac{\bar\tau}{\sqrt{k}} + y_{k+1} \left(\sqrt{1+\frac{1}{k}} - 1\right).
\end{equation}
Since $\sqrt{1+\frac{1}{k}}-1 \leq \frac{1}{2k}$ we can conclude from this that  
\begin{equation}\label{ydiffpart}
 \left| \frac{1}{\sqrt{k}} - \frac{y_k-y_{k+1}}{\bar\tau} \right|\psi_A(\a x, y_k) 
= \frac{y_{k+1}}{\bar\tau} \left| \sqrt{1+\frac{1}{k}} - 1 \right| \psi_A(\a x, y_k) 
%\leq \frac{y_{k+1}}{2 \bar\tau k} \psi_A(\a x, y_k) 
\leq \frac{C}{k} y_{k+1} e^{-c y_k^2}, 
\end{equation}
where in the last inequality we used that the bounds on $\gamma_A$ in \eqref{phibounds} imply that $\psi_A(z,y) \leq C e^{-cy^2}$.

To control the second term in \eqref{psicompare}, note that for $y \in [y_{k+1},y_k]$, 
\begin{equation}\label{psidiff}
 \left| \psi_A(\a x,y) - \psi_A(\a x,y_k) \right| \leq C |y_k-y_{k+1}| \sup_{y \in [y_{k+1},y_k]} e^{-c y^2}
%\leq C' |y_k-y_{k+1}| e^{-c (y_k^2 \wedge y_{k+1}^2)}
\end{equation}
%%%%%%% Details...
%\begin{align*}
% \left| \psi_A(\a x,y) - \psi_A(\a x,y_k) \right|
%&\leq \int_0^{\a x} | \phi_A(z,y) - \phi_A(z,y_k)| \, dz \\
%&\leq \int_0^{\a x} |y-y_k| \sup_{y \in [y_{k+1},y_k]} \frac{\del}{\del y} \phi_A(z,y) \, dz \\
%\leq |y_k-y_{k+1}| \int_0^{\a x} \sup_{y \in [y_{k+1},y_k]} (|z|+|y|) e^{-c (z^2 + y^2)} \, dz \\
%&\leq |y_k-y_{k+1}| \int_0^{\a x} \sup_{y \in [y_{k+1},y_k]} C e^{-c' (z^2 + y^2)} \, dz \\
%&\leq C' |y_k-y_{k+1}| \sup_{y \in [y_{k+1},y_k]} e^{-c' y^2}
%\end{align*}
To further simplify the supremum on the right, note that for any $y \in [y_{k+1},y_k]$ 
\begin{align*}
 y_{k+1}^2 \leq 2y^2 + 2(y - y_{k+1})^2 
\leq 2y^2 + 2(y_k-y_{k+1})^2 \leq 2 y^2 + \frac{4 \bar\tau^2}{k} + \frac{y_{k+1}^2}{k^2}, 
\end{align*}
where we used \eqref{ykdiff} in the last inequality. 
%Equivalently
%\[
% y^2 \geq \left(1-\frac{1}{k^2}\right)\frac{ y_{k+1}^2}{2} - \frac{2 \bar\tau^2}{k}
%\]
For $k\geq 2$ this implies that $\inf_{y \in [y_{k+1},y_k]} y^2 \geq \frac{3}{8} y_{k+1}^2 - \bar\tau^2 $, and this is also trivially true for $k=1$ since $0<y_2<y_1$ so that we can conclude 
\begin{equation}\label{ey2}
 \sup_{y \in [y_{k+1},y_k]} e^{-c y^2} \leq C e^{-\frac{3 c}{8} y_{k+1}^2}. 
\end{equation}
Using \eqref{psidiff}, \eqref{ey2} and then \eqref{ykdiff} we can bound the second term in \eqref{psicompare} by 
\begin{align*}
 \frac{1}{\bar\tau} \int_{y_{k+1}}^{y_k} \left| \psi_A(\a x,y) - \psi_A(\a x,y_k) \right| \, dy 
&\leq C |y_k-y_{k+1}|^2 e^{-c y_{k+1}^2} \\
&\leq C' \left( \frac{1}{k} + \frac{y_{k+1}^2}{k^2} \right) e^{-c y_{k+1}^2} 
\leq \frac{C''}{k} e^{-c' y_{k+1}^2 }. 
\end{align*}
Combining this with \eqref{ydiffpart} and \eqref{psicompare} we obtain that 
\[
 \left| \frac{1}{\sqrt{k}} \psi_A(\a x, y_k) - \frac{1}{\bar\tau} \int_{y_{k+1}}^{y_k} \psi_A(\a x,y) \, dy \right| \leq \frac{C}{k} y_{k+1} e^{-c y_k^2} + \frac{C}{k} e^{-c y_{k+1}^2} \leq \frac{C'}{k} e^{-c' y_{k+1}^2} 
\]
and thus,
\begin{align*}
& \sum_{m=0}^{\fl{\sqrt{n}}} \sum_{k=1}^{n-1} \left| \frac{1}{\sqrt{k}} \psi_A(\a x, y_k) - \frac{1}{\bar\tau} \int_{y_{k+1}}^{y_k} \psi_A(\a x,y) \, dy \right| \bP(\tau_1 > m, |X_m|\leq \sqrt{n}) \\
&\leq \sum_{m=0}^{\fl{\sqrt{n}}} \sum_{k=1}^n \frac{C'}{k} e^{-c' y_{k+1}^2} \bP(\tau_1 > m, |X_m|\leq \sqrt{n}) \\
&\leq \frac{C''}{\sqrt{n}} \sum_{m=0}^{\fl{\sqrt{n}}} \bP(\tau_1 > m) \leq \frac{C''\bar\tau}{\sqrt{n}},
\end{align*}
where we used \eqref{sumey2a} in the second to last inequality. 
\end{proof}

\end{proof}

\subsection{Accounting for the first regeneration interval}

In this subsection, we will show how to account for the difference of the first regeneration interval to improve Proposition \ref{pr:BEregbP} to a proof of Theorem \ref{th:BEregCLT}. 

%\begin{lem}
% Let the assumptions be as in the main theorem. Assume that 
%\[
% \limsup_{n\ra\infty} n^{\d/2} \sup_{t \in \R} \left| \bP\left( \frac{X_n - n\mu}{\s \sqrt{n}} \leq t \right) - \Phi(t) \right| < \infty. 
%\]
%Then, 
%\[
%  \limsup_{n\ra\infty} n^{\d/2} \sup_{t \in \R} \left| \P\left( \frac{X_n - n\mu}{\s \sqrt{n}} \leq t \right) - \Phi(t) \right| < \infty. 
%\]
%\end{lem}
\begin{proof}[Proof of Theorem \ref{th:BEregCLT}]
%For notational convenience, let $F_n$ and $\overline{F}_n$ denote the re-scaled distribution of $X_n$ under $\P$ and $\bP$, respectively. That is, 
%\[
% F_n(t) = \P\left( \frac{X_n-n\mu}{\s \sqrt{n}} \leq t \right), 
% \quad\text{and}\quad 
%\overline{F}_n(t) = \bP\left( \frac{X_n-n\mu}{\s \sqrt{n}} \leq t \right). 
%\]
By conditioning on the values of $\tau_1$ and $X_{\tau_1}$ we obtain that 
 \begin{align*}
  \P\left( \frac{X_n - n\mu}{\s \sqrt{n}} \leq t \right) 
%  \P(X_n - n\mu \leq \s t \sqrt{n} ) 
&= \sum_{m=1}^{\fl{\sqrt{n}}} \int_{-\sqrt{n}}^{\sqrt{n}} \P(X_{\tau_1} \in dz, \, \tau_1 = m) \bP( X_{n-m} - (n-m)\mu \leq \s t \sqrt{n} -z+m\mu) \\
&\qquad + \P\left( \frac{X_n - n\mu}{\s\sqrt{n}} \leq t, \text{ and } \max\{ |X_{\tau_1}|, \tau_1 \} > \sqrt{n} \right)
 \end{align*}
Since 
\[
 \Phi(t)
 = \sum_{m=1}^{\fl{\sqrt{n}}} \int \P(X_{\tau_1} \in dz, \, \tau_1 = m)\Phi(t) + \P\left(\max\{ |X_{\tau_1}|, \tau_1 \} > \sqrt{n}\right)\Phi(t),
\]
by comparing like terms we obtain 
\begin{align*}
& \left| \P\left( \frac{X_n - n\mu}{\s \sqrt{n}} \leq t \right) - \Phi(t) \right|\\
&\leq \sum_{m=1}^{\fl{\sqrt{n}}} \int_{-\sqrt{n}}^{\sqrt{n}} \P(X_{\tau_1} \in dz, \, \tau_1 = m)
\left| \bP( X_{n-m} - (n-m)\mu \leq \s t \sqrt{n} -z+m\mu) - \Phi(t) \right| \\
&\qquad + 2 \P(\tau_1 > \sqrt{n}) + 2 \P(|X_{\tau_1}| > \sqrt{n}) \\
%&\leq \sum_{m=1}^{\fl{\sqrt{n}}} \int_{-\sqrt{n}}^{\sqrt{n}} \P(X_{\tau_1} \in dz, \, \tau_1 = m)
%\left| \bP\left( \frac{X_{n-m} - (n-m)\mu}{\s \sqrt{n-m}} \leq  t \sqrt{\frac{n}{n-m}} -\frac{z-m\mu}{\s\sqrt{n-m}} \right) - \Phi\left(  t \sqrt{\frac{n}{n-m}} -\frac{z-m\mu}{\s\sqrt{n-m}} \right) \right| \\
%&\qquad + \sum_{m=1}^{\fl{\sqrt{n}}} \int_{-\sqrt{n}}^{\sqrt{n}} \P(X_{\tau_1} \in dz, \, \tau_1 = m)
%\left| \Phi\left(  t \sqrt{\frac{n}{n-m}} -\frac{z-m\mu}{\s\sqrt{n-m}} \right) - \Phi(t) \right| \\
%&\qquad + 2 \P(\tau_1 > \sqrt{n}) + 2 \P(|X_{\tau_1}| > \sqrt{n}) \\
%&\leq \sum_{m=1}^{\fl{\sqrt{n}}} \int_{-\sqrt{n}}^{\sqrt{n}} \P(X_{\tau_1} \in dz, \, \tau_1 = m)
%\left| F_{n-m} \left(  t \sqrt{\frac{n}{n-m}} -\frac{z-m\mu}{\s\sqrt{n-m}} \right) - \Phi\left(  t \sqrt{\frac{n}{n-m}} -\frac{z-m\mu}{\s\sqrt{n-m}} \right) \right| \\
%&\qquad + \sum_{m=1}^{\fl{\sqrt{n}}} \int_{-\sqrt{n}}^{\sqrt{n}} \P(X_{\tau_1} \in dz, \, \tau_1 = m)
%\left| \Phi\left(  t \sqrt{\frac{n}{n-m}} -\frac{z-m\mu}{\s\sqrt{n-m}} \right) - \Phi(t) \right| \\
%&\qquad + 2 \P(\tau_1 > \sqrt{n}) + 2 \P(|X_{\tau_1}| > \sqrt{n}) \\
%%&\leq \sum_{m=1}^{\fl{\sqrt{n}}} \int_{-\sqrt{n}}^{\sqrt{n}} \P(X_{\tau_1} \in dz, \, \tau_1 = m) \| \overline{F}_{n-m} - \Phi \|_\infty \\
&\leq \sum_{m=1}^{\fl{\sqrt{n}}} \int_{-\sqrt{n}}^{\sqrt{n}} \P(X_{\tau_1} \in dz, \, \tau_1 = m)
\left\{ \sup_s \left| \bP\left( \frac{\bar{X}_{n-m}}{\s\sqrt{n-m}} \leq s \right) - \Phi(s)   \right| \right\} \\
&\qquad + \sum_{m=1}^{\fl{\sqrt{n}}} \int_{-\sqrt{n}}^{\sqrt{n}} \P(X_{\tau_1} \in dz, \, \tau_1 = m)
\left| \Phi\left(  t \sqrt{\tfrac{n}{n-m}} -\tfrac{z-m\mu}{\s\sqrt{n-m}} \right) - \Phi(t) \right| \\
&\qquad + \frac{2( \E[\tau_1^\d] + \E[|X_{\tau_1}|^\d] )}{n^{\d/2}}.
\end{align*}
For $n$ large enough and $m\leq \sqrt{n}$ we have from Proposition \ref{pr:BEregbP} that 
%$\sup_s | \overline{F}_{n-m}(s) - \Phi(s) | \leq \frac{C}{(n-\sqrt{n})^{\d/2}} \leq \frac{C'}{n^{\d/2}}$. 
the supremum in braces on the right is bounded by $C/\sqrt{n}$ for $n$ large enough. 
Therefore, we need only to show that 
\begin{equation}\label{Phidiff}
\limsup_{n\to\infty} n^{\d/2} \sup_{t\in \R} \sum_{m=1}^{\fl{\sqrt{n}}} \int_{-\sqrt{n}}^{\sqrt{n}} \P(X_{\tau_1} \in dz, \, \tau_1 = m)
\left| \Phi\left(  t \sqrt{\tfrac{n}{n-m}} -\tfrac{z-m\mu}{\s\sqrt{n-m}} \right) - \Phi(t) \right| < \infty. 
\end{equation}

To prove \eqref{Phidiff}, it is easy to show (see \cite[Section V.3, equations (3.3),(3.4)]{petrovRW}) that for any $a>1$ and $b,t \in \R$ that 
\[
|\Phi(at+b) - \Phi(t)| \leq |\Phi(at+b)-\Phi(at)| + |\Phi(at)-\Phi(t)| \leq \frac{1}{\sqrt{2\pi}} |b| + \frac{1}{\sqrt{2\pi e}}(a-1). 
\]
Therefore, 
\begin{align}
 & \sup_{t\in \R} \sum_{m=1}^{\fl{\sqrt{n}}} \int_{-\sqrt{n}}^{\sqrt{n}} \P(X_{\tau_1} \in dz, \, \tau_1 = m)
\left| \Phi\left(  t \sqrt{\tfrac{n}{n-m}} -\tfrac{z-m\mu}{\s\sqrt{n-m}} \right) - \Phi(t) \right| \nonumber \\
&\leq C \sum_{m=1}^{\fl{\sqrt{n}}} \int_{-\sqrt{n}}^{\sqrt{n}} \P(X_{\tau_1} \in dz, \, \tau_1 = m) \left \{ \tfrac{|z-m\mu|}{\sqrt{n-m}} +  \sqrt{\tfrac{n}{n-m}} - 1 \right\} \nonumber \\
&\leq \frac{C}{\sqrt{n-\sqrt{n}}} \E\left[|X_{\tau_1} - \mu \tau_1| \ind{|X_{\tau_1}|\leq \sqrt{n}, \tau_1 \leq \sqrt{n}} \right] + C \left( \sqrt{\tfrac{n}{n-\sqrt{n}}} - 1 \right). \label{Phidiff2}
%\\
%&\leq \frac{C'}{\sqrt{n}} \left( \E\left[|X_{\tau_1} - \mu \tau_1| \ind{|X_{\tau_1}|\leq \sqrt{n}, \tau_1 \leq \sqrt{n}} \right] + 1 \right). 
\end{align}
Finally, using the moment assumptions regarding the first regeneration time we have that 
\begin{equation}\label{Xtausmall}
 \E\left[|X_{\tau_1} - \mu \tau_1| \ind{|X_{\tau_1}|\leq \sqrt{n}, \tau_1 \leq \sqrt{n}} \right]
%\leq \E\left[|X_{\tau_1} - \mu \tau_1|^\d ((1+\mu)\sqrt{n})^{1-\d} \ind{|X_{\tau_1}|\leq \sqrt{n}, \tau_1 \leq \sqrt{n}} \right] 
\leq (1+\mu)^{1-\d} n^{(1-\d)/2} \E\left[|X_{\tau_1} - \mu \tau_1|^\d \right]. 
\end{equation}
Applying \eqref{Xtausmall} to \eqref{Phidiff2},
% and noting that $\sqrt{\frac{n}{n-\sqrt{n}}}-1 = \bigo(\frac{1}{\sqrt{n}})$, 
we see that \eqref{Phidiff} follows easily.
% since $\sqrt{\frac{n}{n-\sqrt{n}}}-1 \sim \frac{1}{2\sqrt{n}}$.  
\end{proof}

\section{Discussions: rates of convergence of quenched and annealed CLT of RWRE}\label{sec:disc}

The results in Section~\ref{sec:RWRE} give rates of convergence for annealed CLTs of RWRE. 
However, under certain assumptions it is known that CLTs hold under the quenched measures as well. 
Below we will review
%give a comparison of the annealed rates of convergence given in  Corollary \ref{cor:1dratesX} 
%%and \ref{cor:1dratesT} 
%with 
some recent results on the corresponding quenched rates of convergence for one-dimensional RWRE. 
We will then close the paper with a few related open questions.

\subsection{One-dimensional quenched CLTs}
Recall from \eqref{1daCLT} that one-dimensional RWREs with parameter $\k>2$ have annealed CLTs for both the position of the walk and the hitting times of the walk. 
It is known that the position and hitting times of the walk also have Gaussian limiting distributions under the quenched measure $P_\w$ (for $P$-a.e.\ environment $\w$), but that the centering and scaling needs to be somewhat different than  in the annealed CLTs \cite{aRWRE,gQCLT,pThesis}.
%%%%We close this subsection with a comparison of Corollaries \ref{cor:1dratesX} and \ref{cor:1dratesT} with some recent results on the rates of convergence of the corresponding quenched CLTs. 
%For one-dimensional RWRE with $\k>2$, 
%%%%%When $\k>2$, 
%it is known that both the position of the walk and the hitting times have Gaussian limiting distributions (for $P$-a.e.\ environment $\w$) under the quenched measure $P_\w$, but that the centering and scaling needs to be somewhat different than  in the annealed CLTs \cite{aRWRE,gQCLT,pThesis}. 
In particular, 
\begin{equation}\label{qcltTn}
 \lim_{n\to\infty} \sup_x\left| P_\w\left( \frac{T_n - E_\w[T_n]}{\s_1 \sqrt{n}} \leq x \right) - \Phi(x)\right| = 0, \quad P\text{-a.s.}, \quad \text{where } \s_1^2 = E[\Var_\w(T_1)], 
\end{equation}
and 
\[
 \lim_{n\to\infty} \sup_x \left| P_\w\left( \frac{X_n - n\vp + Z_n(\w)}{\vp^{3/2} \s_1 \sqrt{n}} \leq x \right) - \Phi(x)\right|= 0, \quad P\text{-a.s.}, 
 %\quad \text{where } Z_n(\w) = \vp\left( E_\w[T_{\fl{n\vp}}] - \E[T_{\fl{n\vp}}] \right).  
\]
where $Z_n(\w) = \vp\left( E_\w[T_{\fl{n\vp}}] - \E[T_{\fl{n\vp}}] \right).$

Recent results of Ahn and Peterson \cite{apQCLTrates} gave upper bounds for the rates of convergence of these quenched CLTs. 
While the quenched CLT for the hitting times stated in \eqref{qcltTn} had a quenched centering and a deterministic scaling,
the results in \cite{apQCLTrates} show that improved rates of convergence can be obtained for the hitting times by using a quenched scaling as well. 
%the best results for the hitting times in \cite{apQCLTrates} required both a quenched centering and scaling. 
\begin{thm}[Ahn and Peterson \cite{apQCLTrates}]\label{th:QCLTratesT}
Let 
\[
 F_{n,\w}(x) = P_\w\left( \frac{T_n-E_\w[T_n]}{\s_1 \sqrt{n}} \leq x \right) \quad\text{and} \quad \overline{F}_{n,\w}(x) = P_\w\left(\frac{T_n - E_\w[T_n]}{\sqrt{\Var_\w(T_n)}} \leq x \right)
\]
be the centered quenched distribution functions of $T_n$ with deterministic and quenched scalings, respectively. 
\begin{enumerate}
 \item Rates of convergence with deterministic scaling:
\begin{enumerate}
 \item If $\k > 4$, then for any $\e>0$, 
\[
 \lim_{n\to\infty} n^{\frac{1}{2}-\e} \| F_{n,\w} - \Phi \|_\infty = 0, \quad P\text{-a.s.}
\]
 \item If $\k \in (2,4]$, then for any $\e > 0$, 
\[
 \lim_{n\to\infty} n^{1-\frac{2}{\k}-\e} \| F_{n,\w} - \Phi \|_\infty = 0, \quad P\text{-a.s.}
\]
\end{enumerate}

 \item Rates of convergence with quenched scaling. 
 \begin{enumerate}
  \item If $\k > 3$, then there exists a constant $C<\infty$ such that 
 \[
  \limsup_{n\to\infty} \sqrt{n} \|\overline{F}_{n,\w} - \Phi \|_\infty \leq C, \quad P\text{-a.s.}
 \]
  \item If $\k \in (2,3]$ then for any $\e>0$, 
  \[
   \lim_{n\to\infty} n^{\frac{3}{2} - \frac{3}{\k} - \e} \| \overline{F}_{n,\w}-\Phi\|_\infty = 0, \quad P\text{-a.s.}
  \]
 \end{enumerate}
\end{enumerate}
\end{thm}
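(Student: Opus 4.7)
The plan is to exploit the fact that the hitting-time increments $\zeta_i := T_i - T_{i-1}$ are \emph{independent} (though not identically distributed) under the quenched law $P_\w$, by the strong Markov property and transience to the right. Thus $T_n = \sum_{i=1}^n \zeta_i$ is a quenched sum of independent random variables, and the basic engine of the proof would be a Berry--Esseen estimate for independent non-identically distributed summands:
\begin{equation*}
 \sup_x \Abs{\overline F_{n,\w}(x) - \Phi(x)} \le C \frac{\sum_{i=1}^n E_\w[|\zeta_i - E_\w\zeta_i|^{2+\d}]}{\Var_\w(T_n)^{1+\d/2}}, \qquad \d \in (0,1].
\end{equation*}
Under $P$ the environment is i.i.d., so the numerator and $\Var_\w(T_n)/n$ are stationary sums controllable by ergodic/Marcinkiewicz--Zygmund methods.

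For the quenched-scaling statement (part 2), I would feed this estimate with the largest admissible $\d$. By Lemma~\ref{lem:ETmb}, $E[T_1^{2+\d}]<\infty$ iff $2+\d<\k$, so $\d=1$ works when $\k>3$: the numerator grows linearly in $n$ by the ergodic theorem, the denominator grows like $(n\s_1^2)^{3/2}$, and one obtains the $\bigo(n^{-1/2})$ rate of part 2(a). For part 2(b), the choice $\d=\k-2-\e$ yields only the naive rate $n^{-(\k-2)/2+\e}$, whereas the claimed exponent $\tfrac{3}{2}-\tfrac{3}{\k}$ is slightly better. To recover it one needs a truncation/smoothing refinement: split $\zeta_i=\tilde\zeta_i+(\zeta_i-\tilde\zeta_i)$ at level $M_n=n^\a$, apply Berry--Esseen with $\d=1$ to $\sum_i\tilde\zeta_i$ (whose quenched third moments are finite but grow polynomially in $M_n$), and separately bound the tail contribution $\sum_iP_\w(\zeta_i>M_n)$ using sharp tail estimates of Kesten--Kozlov--Spitzer type. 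Optimizing $\a$ against $\k$ produces the exponent $\tfrac{3}{2}-\tfrac{3}{\k}$.

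For the deterministic-scaling statement (part 1), I would bootstrap from part 2 via the identity $F_{n,\w}(x)=\overline F_{n,\w}(y_n)$ with $y_n:=x\s_1\sqrt n/\sqrt{\Var_\w(T_n)}$. Writing
\begin{equation*}
 F_{n,\w}(x)-\Phi(x) = \bigl[\overline F_{n,\w}(y_n)-\Phi(y_n)\bigr] + \bigl[\Phi(y_n)-\Phi(x)\bigr],
\end{equation*}
the first bracket is $\bigo(\|\overline F_{n,\w}-\Phi\|_\infty)$ by part 2, and since $\sup_x|x|\Phi'(x)<\infty$ the second bracket is bounded by a constant multiple of $|1-\s_1\sqrt n/\sqrt{\Var_\w(T_n)}|$, hence of order $|\Var_\w(T_n)-n\s_1^2|/n$ once this ratio is small. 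Now $\Var_\w(T_n)-n\s_1^2=\sum_{i=1}^n(\Var_\w(\zeta_i)-\s_1^2)$ is a stationary sum whose summands lie in $L^p(P)$ only for $p<\k/2$ (using $\Var_\w(\zeta_i)^2\le(E_\w\zeta_i^2)^2\le E_\w\zeta_i^4$ and Lemma~\ref{lem:ETmb}). When $\k>4$ the summands have finite variance and standard a.s.\ CLT-type bounds give $|\Var_\w(T_n)-n\s_1^2|=\bigo(n^{1/2+\e})$, producing part 1(a); when $\k\in(2,4]$, Marcinkiewicz--Zygmund strong laws give $|\Var_\w(T_n)-n\s_1^2|=\bigo(n^{2/\k+\e})$, producing part 1(b).

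The main obstacle will be two-fold. First, all the convergence statements above (ergodic theorem, Marcinkiewicz--Zygmund) a priori give only in-probability or expectation bounds, and upgrading them to \emph{$P$-almost-sure} polynomial rates requires Chebyshev-style tail estimates combined with Borel--Cantelli along an exponential subsequence and a monotonicity argument to interpolate to all $n$. Second, and more seriously, the truncation argument in part 2(b) must be executed with uniform control over $i\le n$ of the \emph{quenched} tails $P_\w(\zeta_i>M_n)$ rather than just the annealed tails $P(\zeta_i>M_n)$; obtaining this uniformity, while simultaneously ensuring that the truncated third moments concentrate around their annealed means at the right rate, is the delicate point that yields the precise exponent $\tfrac{3}{2}-\tfrac{3}{\k}$ rather than the cruder $(\k-2)/2$.
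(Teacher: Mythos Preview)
The paper does not contain a proof of this theorem. Theorem~\ref{th:QCLTratesT} is stated in Section~\ref{sec:disc} (the ``Discussions'' section) purely as a citation of results from Ahn and Peterson \cite{apQCLTrates}; it is background material used to motivate the open questions at the end of the paper, not a result that the present paper establishes. Consequently there is no ``paper's own proof'' against which your proposal can be compared.

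Your sketch is a plausible outline of how such a result might be proved, and indeed the quenched independence of the crossing times $\zeta_i = T_i - T_{i-1}$ together with the non-i.i.d.\ Berry--Esseen bound is the natural starting point. Whether the truncation-and-optimization argument you describe for part~2(b) actually produces the exponent $\tfrac{3}{2}-\tfrac{3}{\k}$, and whether the almost-sure control you need in part~1 goes through as stated, would have to be checked against \cite{apQCLTrates} itself rather than the present paper.
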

The corresponding results for the quenched CLT of the position of the walk are somewhat weaker but don't require a quenched scaling.  
\begin{thm}[Ahn and Peterson \cite{apQCLTrates}]\label{th:QCLTratesX}
 Let $G_{n,\w}(x) = P_\w\left( \frac{X_n - n\vp + Z_n(\w)}{\vp^{3/2}\s_1 \sqrt{n}} \leq x \right)$ be the rescaled quenched distribution function of $X_n$. 
 If $\k >2$, then for any $\e>0$
  \[
   \limsup_{n\to\infty} n^{\frac{1}{4} - \frac{1}{2\k} - \e} \|G_{n,\w} - \Phi\|_\infty = 0, \quad P\text{-a.s.}
  \]
 Moreover, by relaxing the convergence to that of in probability one obtains the following faster rates of convergence. 
 \begin{enumerate}
  \item If $\k \in (2,\frac{12}{5})$, then for any $\e>0$,
  \begin{equation}\label{QCLTrates1}
   \limsup_{n\to\infty} n^{\frac{3}{2} - \frac{3}{\k} - \e} \|G_{n,\w} - \Phi\|_\infty = 0, \quad \text{in $P$-probability.}
  \end{equation}
  \item If $\k > \frac{12}{5}$ then for any $\e>0$, 
 \[
   \limsup_{n\to\infty} n^{\frac{1}{4} - \e} \|G_{n,\w} - \Phi\|_\infty = 0, \quad \text{in $P$-probability.}
  \]
 \end{enumerate}
\end{thm}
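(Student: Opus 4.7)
The plan is to reduce the quenched CLT for $X_n$ to the already-established quenched CLT for hitting times $T_n$ (Theorem~\ref{th:QCLTratesT}) via the standard duality $P_\w(X_n \leq m) = P_\w(T_{m+1} > n)$. Setting $m_n(x,\w) := \fl{n\vp - Z_n(\w) + x\vp^{3/2}\s_1\sqrt{n}}$, this rewrites
\[
G_{n,\w}(x) = P_\w(T_{m_n(x,\w)+1} > n) = 1 - \Phi\!\left(\frac{n - E_\w[T_{m_n(x,\w)+1}]}{\s_1\sqrt{m_n(x,\w)+1}}\right) + (\text{hitting-time quenched CLT error}).
\]
It then suffices to show that $\alpha_n(x,\w) := (n - E_\w[T_{m_n(x,\w)+1}])/(\s_1\sqrt{m_n(x,\w)+1})$ equals $-x + \eta_n(x,\w)$ for a suitably small error $\eta_n$ and to use the Lipschitz property of $\Phi$ to translate this into a bound on $|G_{n,\w}(x) - \Phi(x)|$.

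By the very definition of $Z_n$ one has $E_\w[T_{\fl{n\vp}}] = n + Z_n(\w)/\vp + O(1)$. Writing $m = m_n(x,\w)+1$,
\[
E_\w[T_m] - E_\w[T_{\fl{n\vp}}] = \frac{m - \fl{n\vp}}{\vp} + R_n(x,\w), \qquad R_n(x,\w) := \sum_{j=\fl{n\vp}+1}^{m}\!\bigl(E_\w[T_j - T_{j-1}] - \tfrac{1}{\vp}\bigr).
\]
The summands of $R_n$ are centered i.i.d.\ under $P$ with finite variance (since $\kappa>2$), and there are at most $|m - \fl{n\vp}| \le C\sqrt n$ of them, so a standard concentration estimate gives $|R_n(x,\w)|$ of typical order $n^{1/4}$. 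A short calculation using the values $\vp$ and $\s_1$ collapses all leading terms and yields $\alpha_n(x,\w) = -x - R_n(x,\w)/(\s_1\sqrt{n\vp}) + O(1/\sqrt n)$, so the contribution to $|G_{n,\w}(x) - \Phi(x)|$ is of order $|R_n(x,\w)|/\sqrt n = n^{-1/4}$ in probability. Combined with the quenched-scaling hitting-time bound from Theorem~\ref{th:QCLTratesT}(2) (after converting $\sqrt{\Var_\w(T_m)}$ to $\s_1\sqrt{m}$), this produces the $n^{-1/4}$ in-probability rate of case (2) whenever $\kappa > 12/5$.

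The main obstacles are threefold. First, uniformity in $x$ requires a maximal inequality of the type $\sup_{|k|\le C\sqrt n}|\sum_{j=\fl{n\vp}+1}^{\fl{n\vp}+k}(E_\w[T_j - T_{j-1}] - 1/\vp)|$; when $\kappa \le 4$ the individual summands have only $\kappa$-moments (by arguments parallel to Lemma~\ref{lem:ETmb} and Proposition~\ref{1dregmb}), so the maximum over $\sqrt{n}$ indices picks up an additional $n^{1/(2\kappa)}$ factor---this is precisely the source of the a.s.\ rate $n^{-1/4 + 1/(2\kappa)+\e}$. Second, for $\kappa \in (2, 12/5)$ the in-probability rate $n^{-(3/2 - 3/\kappa)}$ is strictly slower than $n^{-1/4}$ and matches Theorem~\ref{th:QCLTratesT}(2)(b) exactly, indicating that in this regime the binding constraint is no longer the fluctuation of $R_n$ but the quenched hitting-time CLT error itself, so one must use the quenched scaling for $T_m$ throughout and carefully convert back at the end. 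Third, that very conversion between $\sqrt{\Var_\w(T_m)}$ and $\s_1\sqrt{m}$ introduces fluctuations of $\Var_\w(T_m) - m\s_1^2$ over an interval of length $\sqrt{n}$ that must themselves be controlled by analogous sum-of-i.i.d.\ estimates---this is the most delicate step, since for $\kappa$ close to $2$ the summands $\Var_\w(T_1)-\s_1^2$ are heavy-tailed, forcing a second appearance of the $1/(2\kappa)$-style correction in any almost-sure bound.
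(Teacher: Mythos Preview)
The paper does not prove this theorem at all: it is quoted from \cite{apQCLTrates} (Ahn and Peterson) in the discussion Section~\ref{sec:disc}, alongside Theorem~\ref{th:QCLTratesT}, purely to frame open questions about quenched versus annealed rates. There is therefore no ``paper's own proof'' to compare your proposal against.

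That said, your sketch is a plausible reconstruction of the kind of argument one would expect in \cite{apQCLTrates}. The duality $P_\w(X_n\le m)=P_\w(T_{m+1}>n)$ is indeed the standard way to transfer quenched CLT information from hitting times to positions, and you have correctly identified the three genuine difficulties: (i) the maximal inequality needed for uniformity in $x$ over a window of width $C\sqrt{n}$, (ii) the regime split at $\kappa=12/5$ coming from whether the binding error is the $R_n$ fluctuation ($\sim n^{-1/4}$) or the hitting-time Berry--Esseen error ($\sim n^{-(3/2-3/\kappa)}$), and (iii) the conversion between $\sqrt{\Var_\w(T_m)}$ and $\sigma_1\sqrt{m}$, which is itself a heavy-tailed partial-sum problem. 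Your heuristic for the almost-sure exponent $\tfrac14-\tfrac1{2\kappa}$ via the $n^{1/(2\kappa)}$ cost of a maximum over $\sqrt{n}$ terms with $\kappa$-moments is on target.

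What you have written is a credible outline, not a proof: the maximal inequalities and the Borel--Cantelli arguments needed to upgrade in-probability bounds to almost-sure ones are asserted rather than carried out, and the uniformity of the $O(1/\sqrt{n})$ remainders in $x$ is not justified. If you want a self-contained argument you would need to supply those details; otherwise the appropriate move here is simply to cite \cite{apQCLTrates}, as the present paper does.
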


\subsection{Remaining questions for quenched and annealed rates of convergence}

\begin{enumerate}
 \item The rates of convergence of the annealed CLTs in  Corollary \ref{cor:1dratesX} are clearly optimal when $\kappa > 3$. However, since $\frac{3}{2}-\frac{3}{\k} > \frac{\k}{2}-1$ when $\k \in (2,3)$, the results in Theorems \ref{th:QCLTratesT} and \ref{th:QCLTratesX} prompt one to consider whether one can obtained better rates of convergence for the annealed CLTs by using quenched centerings and/or scalings.
 In particular, is it true that 
 \[
  \sup_{x \in \R} \left| \P\left( \frac{T_n - E_\w[T_n]}{\sqrt{Var_\w(T_n)}} \leq x \right) - \Phi(x) \right| = o(n^{-\frac{\k}{2} + 1}), 
 \]
 when $\k \in (2,3)$? 
 \item For multidimensional RWRE, under strong enough moment conditions on the regeneration times it is known that a quenched CLT holds \cite{bzQCLT,rsQCLT}. Moreover, in contrast to the one-dimensional case, the quenched CLT holds with the same (deterministic) centering and scaling as the annealed CLT. 
 Can one prove rates of convergence for the quenched CLT in these cases? Also, can the rate of convergence be improved by instead using a quenched centering and/or scaling instead of the deterministic one? Answering these questions will likely require techniques very different from this paper since the intervals of the walk between regeneration times are no longer i.i.d.\ under the quenched measure. 
 \item There are certain multidimensional RWRE which are not directionally transient but for which a CLT holds; for instance RWRE in balanced random environments \cite{lBCLT,gzBQIP,bdBQIP} or environments in which certain projections of the walk are a simple symmetric random walk \cite{bszCPDRW}. Since these walks are not directionally transient, the regeneration times do not even exist. Can one use other techniques to obtain rates of convergence for the quenched or annealed CLTs of these RWRE?   
\end{enumerate}

\begin{comment}
\appendix

\section{}
 
\

{\color{red}
I'm not sure if we should keep this Appendix. It doesn't seem very important.
}

\begin{lem}
 Suppose $\a(n)$ is a non-negative, non-increasing sequence. 
If $\sum_{n\geq 1} \a(n)^{\gamma} < \infty$ for some $\gamma \in (0,1)$ then $\sum_{n\geq 1} n^\l \a(n) < \infty$ for all $\l < \frac{1-\gamma}{\gamma}$. 
\end{lem}
\begin{proof}
Let $n_k = 2^k$ for $k\geq 0$. Then since $\a(n)$ is non-increasing 
\begin{align*}
 \infty > \sum_{n\geq 1} \a(n)^\gamma 
 \geq \sum_{k\geq 1} (n_k-n_{k-1}) \a(n_k)^\gamma 
 = \frac{1}{2} \sum_{k\geq 1} n_{k} \a(n_k)^\gamma. 
\end{align*}
From this, we can conclude that $\lim_{k\to\infty} n_k^{1/\gamma} \a(n_k) = 0$. However, if $n_k \leq n < n_{k+1}$ then 
\[
 2^{-1/\gamma} n_{k+1}^{1/\gamma} \a(n_{k+1}) \leq n_k^{1/\gamma} \a(n_{k+1}) \leq n^{1/\gamma} \a(n) \leq n_{k+1}^{1/\gamma} \a(n_k) \leq 2^{1/\gamma} n_k^{1/\gamma} \a(n_k), 
\]
and thus we can conclude that $\lim_{n\to\infty} n^{1/\gamma} \a(n) = 0$. That is, $\a(n) = o(n^{-1/\gamma})$ and therefore $n^\l \a(n) = o(n^{\l - \frac{1}{\gamma}})$ which is summable if $\l < \frac{1}{\gamma} - 1 = \frac{1-\gamma}{\gamma}$. 
\end{proof}
\end{comment}

\bibliographystyle{alpha}
\bibliography{RWREref}

\end{document}